\numberwithin{equation}{section} 
\newtheorem{theorem}{\bf Theorem}[section]
\newtheorem{example}{\bf Example}[section]
\newtheorem{remark}{\bf Remark}[section]
\newtheorem{lemma}{\bf Lemma}[section]
\newcommand{\norm}[1]{\left\lVert #1\right\rVert}
\newsavebox{\savepar}		 
\begin{document}

	\title {\bf \large  Penalty-Based Feedback Control and Finite Element Analysis for the Stabilization of Nonlinear Reaction-Diffusion Equations }
	\author{
		Sudeep Kundu\thanks{Department of Mathematical Sciences, Rajiv Gandhi Institute of Petroleum Technology. Email: sudeep.kundu@rgipt.ac.in}  
		\;and
		Shishu Pal Singh\thanks{Department of Mathematical Sciences, Rajiv Gandhi Institute of Petroleum Technology. Email: shishups@rgipt.ac.in}
	}
	\maketitle
	\begin{abstract}	
		In this work, first we employ a penalization technique to analyze a  Dirichlet boundary feedback control problem pertaining to reaction-diffusion equation. We establish the stabilization result of the equivalent Robin problem in the \(H^{2}\)-norm with respect to the penalty parameter. Furthermore, we prove that the solution of the penalized control problem converges to the corresponding solution of the Dirichlet boundary feedback control problem as the penalty parameter \(\epsilon\) approaches zero. A \(C^{0}\)-conforming finite element method is applied to this problem for the spatial variable while keeping the time variable continuous. We discuss the stabilization of the semi-discrete scheme for the penalized control problem and present an error analysis of its solution. Finally, we validate our theoretical findings through numerical experiments including showing that  penalized solution converges to the original solution.
		
	\end{abstract} 
	{\em{ Keywords:}}
	Nonlinear reaction-diffusion equation, Feedback control, Global stabilization, Penalty method, Finite element method,  Error estimate, Numerical experiments.
	
	{\em{AMS classification:}}
	93D15, 35K57, 65M60, 93B52, 65M15.
	\section{Introduction.}	
	We consider the nonlinear reaction-diffusion equation, known as the Chafee-Infante (CI) equation, with the Dirichlet boundary control \cite{19M1252235}, given by
	\begin{align}
		\label{ch1}
		y_{t}-\nu y_{xx} &= \alpha y - \delta y^{3}, \quad (x, t) \in (0, 1) \times (0, \infty), \\
		y(t,0) &= 0, \quad t \in (0, \infty), \label{ch12}\\
		y(t,1) &= u(t), \quad t \in (0, \infty), \\
		y(0,x) &= y_{0}(x), \quad x \in (0, 1),
		\label{ch2}
	\end{align}
	where the reaction coefficients \( \alpha \) and \( \delta \), as well as the diffusion coefficient \( \nu \), are positive constants, and the control input \( u(t) \) is defined as  
	\begin{align}
		u(t) = -r \int_{0}^{1} x y(t,x) \,dx, \label{c}
	\end{align}
	where \( r \) is a positive constant and \( y_{0}(x) \) is a given function. In \eqref{ch1}, subscripts denote differentiation i.e. \(y_{t}(t,x)=\frac{\partial y(t,x)}{\partial t}\) and \(y_{xx}(t,x)=\frac{\partial^{2} y(t,x)}{\partial x^{2}}\).

    The reaction-diffusion equation \eqref{ch1} is used in the study of many applications for example in material science is the phase separation process, where \(y=y(t,x)\) is defined as the concentration of one of the possible phase \cite{MR3590173} and also see \cite{ CHAFEE1975111, SADAF2023107097, MR953967} for application. The controllers can be highly relevant within a reverse engineering approaches for such reaction-diffusion equation \cite{MR3590173}.
      The stabilization of reaction-diffusion systems has attracted significant attention in recent decades; see, for example, \cite{9304185, MR4077349, MR4020518, MR4721995, MR3762670} and references therein. The CI equation is sometimes referred to as the Allen-Cahn equation, the real Ginzburg-Landau equation, or the Newell-Whitehead equation \cite{Martin2023}. The stabilization of this type of equation with Dirichlet boundary control has been well studied; see, for example, \cite{19M1252235, 18M1213129, LHACHEMI2022109955}. Additionally, stabilization with state delay under Dirichlet boundary control, with Robin boundary conditions, and with delay in the Dirichlet boundary control can be found in \cite{7426756}, \cite{9099005}, and \cite{8392394}, respectively. Furthermore, the stabilization of this equation with distributed feedback control and Neumann boundary feedback control has been discussed in \cite{azouani2013feedback} and \cite{yang2019asymptotic}, respectively.
     
One of the major challenges for second-order partial differential equations with Dirichlet boundary conditions is that the problem is non-variational and one cannot identify a suitable functional framework without using an appropriate boundary lifting. The global exponential stability of the closed loop system \eqref{ch1}–\eqref{c} in the $H^{1}$-norm has been shown in \cite{19M1252235}, by constructing a control Lyapunov functional. Moreover, they have discussed the existence and uniqueness of the solution to the closed loop system \eqref{ch1}–\eqref{c}.     
     Previous studies have employed a boundary penalization to the uncontrolled elliptic equation in  \cite{MR351118, MR853660, gudi2014}, where the boundary penalization techniques converts a Dirichlet boundary condition into a Robin boundary. In the work \cite{MR853660}, Barrett et al. have discussed the error analysis with respect to the penalty parameter, using a finite element method. Using Babu\v ska’s penalty method, the optimal error estimates are established in \cite{gudi2014}.
	Moreover, the boundary penalization technique has been widely used in a finite element analysis across various applications, such as flow problems \cite{MR660571}, unsteady and steady Navier-Stokes equations \cite{CAREY1984183, MR2136999, HE2010708, MR2912615, 0732016}, viscoelastic fluid models \cite{MR4399840, MR2754643, MR2818046}, and magnetohydrodynamics equations \cite{JDeng2019, MR3486328, MR3969002}, among others.
	
	In the literature, the approximation of Dirichlet boundary control problems using boundary penalization technique has been well established in the context of optimal control problems. In \cite{S0363012996304870, S1064827597325153}, the authors have solved the steady-state Navier-Stokes equation using the boundary penalization technique. Further, the error analysis of penalized problem is discussed with the help of  a finite element approximation in \cite{S1064827597325153}. Using a mixed Galerkin finite element method, Ravindran \cite{ravindran2017finite} studied boundary penalization control for the unsteady Navier-Stokes equation and established optimal error estimates in the \( L^{\infty}(L^{2}) \)-norm for the penalized system. In \cite{18M1233716}, Ravindran extended this work  for the nonstationary magnetohydrodynamics equation using a boundary penalization technique. Additionally, the convergence analysis of the penalty function was examined, showing that the penalized control problem converges to the original control problem as \( \epsilon \to 0 \), where \( \epsilon \) is the penalty parameter.  
	In the work \cite{belgacem2003singular}, error estimates were derived in terms of \( \epsilon \), showing that the order of convergence is \( \mathcal{O}(\epsilon^{\frac{1}{2}}) \) for regular domains and \( \mathcal{O}(\epsilon^{\frac{1}{2}-\delta}) \) for all \( \delta > 0 \) in convex polygons. After that Eduardo et al. \cite{casas2009penalization} establish an error estimate of order \( \mathcal{O}(\epsilon^{1-\frac{1}{p}})\) for some \(p>2\) with control constraints. Additionally,  a Robin boundary penalization for other second order parabolic equations has been shown in \cite{MR1874072, ben2003penalized, ben2011dirichlet}, where convergence analysis of the penalized solution is discussed. Moreover, the authors of \cite{ben2003penalized} discuss the convergence rate with respect to \(\epsilon\).
	
	In this article, we examine the boundary penalization techniques for the closed loop system \eqref{ch1}–\eqref{c}. Furthermore, we demonstrate the result of \cite{19M1252235} numerically.
    Since this type of boundary control does not naturally appear in the weak formulations, we use a boundary penalization to transform the above closed loop system \eqref{ch1}–\eqref{c} into an equivalent Robin boundary problem \eqref{p1}–\eqref{p2}, which is defined later. This transformation is advantageous, as Robin boundary conditions are more amenable to finite element implementations than Dirichlet boundary conditions, as is also the case for boundary feedback control problems. To the best of our knowledge, this work presents the first application of  penalization techniques to Dirichlet boundary feedback control problems related to the reaction-diffusion type equation.  
	
	In this work, we present the following contributions:  
	\begin{itemize}
		
		\item First, we derive some {\it{a priori}} bounds  for the closed loop system \eqref{ch1}-\eqref{c} following \cite{19M1252235}. Then we establish the exponential stabilization of the penalized feedback control problem and provide regularity results that are crucial for deriving error estimates for both the state variable and control input. 
		\item We conduct a convergence analysis and show that the solution of the penalized feedback control problem approaches that of the corresponding Dirichlet boundary control problem as the penalty parameter \( \epsilon \) tends to zero.  
		\item Using a \(C^{0}\)-conforming finite element method, the global stabilization of the semi-discrete scheme is discussed. Further, we derive error estimates for the semi-discrete scheme of the state variable and control input in the penalized control problem.  
		\item We provide some numerical experiments to show the behavior of state and control trajectory  as well as their order of convergence. Moreover, the convergence behavior of the penalized control problem is examined, including the numerical solution of the closed loop system \eqref{ch1}-\eqref{c}.  
		
	\end{itemize}
	In this paper, we adopt the following notations from \cite{MR1625845}:
	\begin{itemize}
		\item \(C^{k}([0,T]; (0,1))\), where \(k\geq 1,\) denotes the class of continuous function on \([0, T]\), which have continuous derivatives of order \(k\) on \([0,T]\) and also take values in \((0,1)\).
		\item  The space $ L^{p}((0,T);X)$ consist of all strongly measurable functions $ v:[0,T]\rightarrow X $, equipped with the norm:
		\begin{align*}
			\norm{v}_{L^{p}((0,T);X)}:=
			\begin{cases}
				\left(\int_{0}^{T}\norm{v}_{X}^{p}dt\right)^{\frac{1}{p}}< \infty, &\text{if} \ 1 \leq p<\infty, \\
				\mathop{\mathrm{ess\, sup}} \limits_{0\leq t\leq T} (\norm{v(t)}_{X})< \infty, \medspace &\text{if}\  p=\infty,
			\end{cases}
		\end{align*}
		where $ X $ denotes a Banach space with norm $ {||.||}_{X}$. We use the notation $ {( .,. )}$, which represents the $ L^{2}$- inner product with the corresponding norm denoted by \( \norm{\cdot} \). We denote to \(L^{p}((0,T);X) \) by \(L^{p}(X) \).
		\item The Sobolev space is defined as 
		\[
		H^{m}(0,1) = \left\{ v \mid v\in L^{2}(0,1), \ \frac{\partial^{i} v}{\partial x^{i}} \in L^{2}(0,1), \quad \text{for } i = 1, 2, \ldots, m \right\}.
		\]  
		Additionally, we define  
		\( H_{\{0\}}^{1}(0,1) := \{ v \in H^{1}(0,1) \mid v(0) = 0 \}.\)

		
		\item \textbf{Young's Inequality:} For all \( a, b > 0 \) and \( c_{0} > 0 \), we have  
		\[
		ab \leq \frac{c_{0}}{2} a^{2} + \frac{1}{2c_{0}} b^{2}.
		\]
	\end{itemize}
	
	\begin{itemize}
		\item \textbf{Sobolev Embedding:}[{Chapter 5}] The following embedding holds:  
		\[
		H^{1}(0,1) \hookrightarrow L^{p}(0,1), \quad 2\leq p\leq \infty.
		\]
		
	\end{itemize}
	Throughout this paper, \( C \) denotes a generic positive constant.\\
	The rest of the paper is organized as follows: Section~$2$ presents some regularity results and some exponential bounds. In Section~$3$, we discuss the formulation and stabilization of the penalized control problem. Section~$4$ analyzes the uniqueness and convergence analysis of the penalized control problem. Section~$5$ focuses on the finite element method for the semi-discrete solution, providing error estimates for both the state variable and control input. Finally, in Section~$6$, we verify our theoretical conclusions using numerical experiments.
	\section{Dirichlet Control Problem.}
	Here, we discuss some regularity results for the closed loop system \eqref{ch1}-\eqref{c}. The closed loop system \eqref{ch1}-\eqref{c} has been studied in a more general framework in \cite{19M1252235}.
	
	We introduce the variational formulation of the closed loop system \eqref{ch1}-\eqref{c} is to find \( y \in H_{\{0\}}^{1} \) such that
	\begin{align}
		\label{1.6}
		(y_{t}, \chi) + \nu (y_{x}, \chi_{x}) = \nu y_{x}(1)\chi(1) + \alpha(y, \chi) - \delta(y^{3}, \chi), \quad \text{for all } \chi \in H_{\{0\}}^{1},
	\end{align}
	with the boundary condition
	\[
	y(t,1) = u(t), \quad \text{where } u(t) = -r\int_{0}^{1} x y(t,x)\, dx=-r(x, y),
	\]
	where \(y=y(t,x)\). We write $y(t,x)$ as \(y\) when no confusion arises.
	
	Now, we state the regularity results for the closed loop system \eqref{ch1}-\eqref{c}.
    
	\begin{theorem}
		\label{thr.1}
		Let \(y_{0}\in H^{2}(0,1)\) with \(y_{0}(0)=0, \  y_{0}(1)=-r(x,y_{0})\). Then there exists   \(t_{m}\in(0, \infty]\) and a unique mapping \(y\in C^{0}([0, t_{m}), [0,1])\cap C^{1}((0, t_{m}); L^{2}(0,1))\) with \(y(0,x)=y_{0}, \ y\in H^{2}(0,1)\) for all \(t\in[0, t_{m})\),  for which the mapping \(t\rightarrow \norm{y_{x}+rx y}^{2}\) is \(C^{1}\) on \([0, t_{m})\), and for which the closed loop system \eqref{ch1} holds for all \(t\in(0, t_{m}) \) and \eqref{ch12}-\eqref{c} holds for all \(t\in[0, t_{m})\).
        \end{theorem}
		\begin{proof}
			For the proof, see \cite{19M1252235}.
		\end{proof}
	
	The following lemma holds under the regularity results stated in Theorem \ref{thr.1}. These results will assist in the convergence analysis between the penalized control problem \( y^{\epsilon} \) and the corresponding Dirichlet boundary control problem \( y \) for the closed loop system \eqref{ch1}-\eqref{c}.
	
	\begin{lemma}
		\label{L1.1}
		Suppose that the assumption of Theorem \ref{thr.1} true. Then, there exists a decay rate \( 0 < \gamma \leq \dfrac{3\left(2\nu - \alpha - \frac{1}{3}(r\alpha + r^2\nu)\right)}{r+3} \), and the condition
		\[
		\dfrac{\alpha}{\nu} \leq \dfrac{6 - r^2}{r + 3}, \quad r <\min \left\{ \frac{1}{\norm{x}_{L^{4}} \norm{x}_{L^{\frac{4}{3}}}},\, \sqrt{6} \right\},
		\]
		such that the following holds
		\begin{align*}
		\norm{y}^{2} + r(x,y(t))^{2}+\nu \norm{y_{x}}^{2}
		+ \frac{\delta}{2} \norm{y}_{L^{4}}^{4} &+ \beta^{*} e^{-2\gamma t} \int_{0}^{t} e^{2\gamma s} \left( \norm{y_{x}(s)}^{2} + \norm{y(s)}_{L^{4}}^{4} \right)\, ds 
		\\&+ e^{-2\gamma t} \int_{0}^{t} e^{2\gamma s} \left( \norm{y_{t}(s)}^{2} + r(x,y_{t}(s) )^{2} \right) ds\leq C e^{-2\gamma t} \norm{y_{0}}_{1}^{2},
		\end{align*}
		where
		\[
		0 < \beta^{*} = \min \left\{ 2\nu - \alpha - \frac{\gamma}{3}(r+3) - \frac{1}{3}(r\alpha + r^2\nu), \, \delta\left(1 - r\norm{x}_{L^{4}} \norm{x}_{L^{\frac{4}{3}}} \right) \right\},
		\] $C$ is a positive constant, and \[\norm{x}_{L^{4}} \norm{x}_{L^{\frac{4}{3}}}
 = \left( \frac{1}{5} \right)^{\frac{1}{4}} \left( \frac{3}{7} \right)^{\frac{3}{4}} \approx 0.3542.\]
	\end{lemma}
	\begin{proof}
	For details of the proof see \cite{19M1252235}.
	\end{proof}
	\begin{remark} 
		\label{r2.2}
		From Lemma \ref{L1.1}, we conclude that the system is stable around an equilibrium solution \(y^{\infty}=0\), when \(r=0\). There exists a decay rate \( 0 < \gamma \leq 2\nu - \alpha \) with the condition
		$\frac{\alpha}{\nu} \leq 2, $
		such that  the following estimate holds
		\[
		\norm{y}^{2} + \min \{(2\nu - \alpha - \gamma ), \, \delta \} e^{-2\gamma t} \int_{0}^{t} e^{2\gamma s} \left( \norm{y_{x}(s)}^{2} + \norm{y(s)}_{L^{4}}^{4} \right)\, ds \leq C e^{-2\gamma t} \norm{y_{0}}^{2}.
		\]
	\end{remark}
	Therefore, the system \eqref{ch1} is stabilizable with zero Dirichlet boundary condition under the assumption $\frac{\alpha}{\nu} \leq 2 $.
	\begin{remark}
		\label{r1.2}
		Applying the Cauchy-Schwarz inequality, from \eqref{ch1} we have
		\[\nu \norm{y_{xx}}^{2}\leq C(\norm{y_{t}}^{2}+ \norm{y}^{2}+ C\norm{y}_{1}^{6}).\]
		Hence, using Lemma~\ref{L1.1} and the Sobolev embedding, we obtain
		\[\int_{0}^{t}\nu e^{2\gamma s}\norm{y_{xx}(s)}^{2}ds \leq  C(\norm{y_{0}}_{1}^{2}) e^{-2\gamma t}.\]
	\end{remark}

     The following regularity results are needed for the convergence analysis of the penalized controlled system.
    \begin{theorem}
    \label{th2.2}
        Let \(y_{0}\in H^{3}(0,1)\cap H_{\{0\}}^{1}(0,1)\) and the regularity result stated in Theorem \ref{thr.1} hold. Then, there exists a positive constant  \(C=C(\norm{y_{0}}_{1})\) such that
        \[\norm{y_{t}}^{2}+r(x,y_{t}(t))^{2}+e^{-2\gamma t}\int_{0}^{t}e^{2\gamma s}(\nu \norm{y_{xt}(s)}^{2}+ 3\delta\norm{y(s)y_{t}(s)}^{2})ds\leq Ce^{-2\gamma t}\norm{y_{0}}_{2}^{2},\]
        and 
        \[\nu \norm{y_{xt}}^{2}+e^{-2\gamma t}\int_{0}^{t}e^{2\gamma s}\Big(\norm{y_{tt}(s)}^{2}+r(x,y_{tt}(s))^{2}\Big)ds\leq Ce^{-2\gamma t}\norm{y_{0}}_{3}^{2}.\]
    \end{theorem}
	\begin{proof}
	    For the proof see Appendix.
	\end{proof}
	
    As a consequence of Remark \ref{r1.2} and Theorem \ref{th2.2}, we get the estimate \(\norm{y_{xx}}\), which decays exponentially. 
	\begin{remark}
		\label{r1.3}
		From \eqref{1.7}, we have
		\[\nu y_{xxt}=y_{tt}-\alpha y_{t}+3\delta y^{2}y_{t}.\]
		Therefore, we obtain
		\[\nu \norm{y_{xxt}}^{2}\leq C(\norm{y_{tt}}^{2}+ \norm{y_{t}}^{2}+ \norm{y_{t}}^{2}\norm{y}_{\infty}^{4}).\]
		Using Theorem \ref{th2.2}, we arrive at 
		\[\int_{0}^{t}\nu e^{2\gamma s}\norm{y_{xxt}(s)}^{2}ds \leq  Ce^{-2\gamma t}\norm{y_{0}}_{3}^{2}.\]
	\end{remark}
	
	\section{Penalized Control Problem.}
	Next, we discuss the formulation of the Dirichlet boundary feedback control problem using a boundary penalization procedure.
	
	Using a boundary penalization technique, the Dirichlet boundary feedback control problem given by \eqref{ch1}–\eqref{c} can be reformulated as follows
	\[
	\frac{\partial y^{\epsilon}}{\partial x}(t,1)+ \frac{1}{\epsilon}y^{\epsilon}(t,1)=\frac{1}{\epsilon}u^{\epsilon}(t),
	\]
	where $   \epsilon>0 $ is a penalty parameter.
	
	Now, for each \( \epsilon > 0 \), we introduce the following Robin boundary control problem: find \( y^{\epsilon} = y^{\epsilon}(t,x) \), with \( x \in (0,1) \) and \( t > 0 \), such that
	\begin{align}
		\label{p1}
		&y^{\epsilon}_{t}=\nu y^{\epsilon}_{xx}+ \alpha y^{\epsilon}- \delta (y^{\epsilon})^{3},\\[1mm]&
		y^{\epsilon}(t,0)=0, \ 
		\epsilon \frac{\partial y^{\epsilon}}{\partial x}(t,1)+ y^{\epsilon}(t,1)=u^{\epsilon}(t),\\[1mm]&
		y^{\epsilon}(0,x)=y_{0}(x),
		\label{p2}
	\end{align}
	where the penalized control input is defined as $ u^{\epsilon}(t)=-r\int_{0}^{1}xy^{\epsilon}(t,x) dx$. Therefore, when the penalty parameter \(\epsilon \rightarrow 0 \), the penalized feedback control problem \eqref{p1}-\eqref{p2} becomes the original closed loop system \eqref{ch1}-\eqref{c}.
	
	The variational formulation of the penalized feedback control problem \((\ref{p1})\)–\((\ref{p2})\) is as follows: find \( y^{\epsilon}(t) \in H_{\{0\}}^{1}(0,1) \) such that
	\begin{align}
		\label{1.12}
		&(y_{t}^{\epsilon}, \chi)+\nu (y_{x}^{\epsilon}, \chi_{x}) + \frac{\nu}{\epsilon}y^{\epsilon}(t,1) \chi(1)+ \delta((y^{\epsilon})^{3}, \chi)= \frac{\nu}{\epsilon}u^{\epsilon}(t)\chi(1)+\alpha(y^{\epsilon}, \chi), \ \ \forall \ \chi\in H_{\{0\}}^{1}.
	\end{align}
	Throughout this paper, the following assumptions hold:
	
	\textbf{(A1).} \label{A1} We assume that 
	\[ \frac{\alpha}{\nu}\leq 2 \frac{(3\epsilon - r^{2})}{3\epsilon},\ \text{and} \ \frac{r^{2}}{3\epsilon}\leq \frac{1}{2}   ,\]
	
	with the decay rate \begin{align}
		\label{2.1}
		0< \gamma\leq \nu- \alpha.
	\end{align}
	In the following theorem, the existence and uniqueness of the penalized feedback control problem \eqref{p1}–\eqref{p2} is discussed.
	\begin{theorem}(Regularity).
    \label{th3.1}
		Let $ y_{0} \in H^{3}(0,1)\cap H_{\{0\}}^{1}(0,1) $ and \(y_{0}(1)=-r( x, y_{0})\). Under assumption $(A1)$, there exists a unique weak solution \( y^{\epsilon} \) such that
		\begin{align*}
			y^{\epsilon} \in L^{\infty}&((0,T),H^{2}(0, 1))\cap L^{2}((0, T), H^{2}(0,1))\cap  L^{\infty} \left((0, T), L^{4}(0,1) \right)\cap L^{4} \left((0, T), L^{4}(0,1) \right), \\&  \text{and} \
			\frac{\partial y^{\epsilon}}{\partial t} \in L^{\infty} \big((0, T), H^{1}(0,1)\big)\cap L^{2}\big((0, T), H^{2}(0,1) \big) .
		\end{align*}
	\end{theorem}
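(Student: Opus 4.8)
The plan is to construct the solution by the Faedo--Galerkin method, derive $m$-uniform a priori estimates that mirror the energy arguments of Lemmas~\ref{L1.1}--\ref{L1.3}, pass to the limit using an Aubin--Lions compactness argument to handle the cubic nonlinearity, and finally establish uniqueness through an energy estimate for the difference of two solutions. First I would fix a smooth basis $\{\phi_j\}_{j\ge 1}$ of $H_{\{0\}}^{1}(0,1)$ (for instance the eigenfunctions of $-\partial_{xx}$ with the mixed Dirichlet/Robin conditions, so that the basis is orthogonal in both $L^2$ and the energy inner product) and seek the Galerkin approximation $y_m^{\epsilon}(t)=\sum_{j=1}^{m} c_j(t)\phi_j$ satisfying \eqref{1.12} tested against each $\phi_j$, $1\le j\le m$, with $y_m^{\epsilon}(0)$ the $L^2$-projection of $y_0$. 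Because the feedback $u^{\epsilon}(t)=-r(x,y_m^{\epsilon})$ is linear and bounded in $y_m^{\epsilon}$, while the reaction term $(y_m^{\epsilon})^{3}$ is polynomial (hence locally Lipschitz) in the coefficients $c(t)$, the resulting ODE system admits a unique local solution by the Picard--Lindel\"of theorem; the global a priori bounds below then rule out finite-time blow-up and extend the solution to $[0,T]$.

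Next I would derive the estimates uniformly in $m$. Testing the Galerkin equations with $y_m^{\epsilon}$ and reproducing the computation of Lemma~\ref{L1.1}, the boundary term $\frac{\nu}{\epsilon}\big((y_m^{\epsilon}(t,1))^{2}-u^{\epsilon}(t)\,y_m^{\epsilon}(t,1)\big)$ together with the feedback $u^{\epsilon}=-r(x,y_m^{\epsilon})$ is absorbed via Young's inequality, and the structural assumption \textbf{(A1)} (namely $r^{2}<3\epsilon$ and $\alpha/\nu\le 2(3\epsilon-r^{2})/(3\epsilon)$, together with the decay rate \eqref{2.1}) guarantees that the remaining coefficients stay nonnegative, yielding a uniform bound
\[
\norm{y_m^{\epsilon}(t)}^{2} + \int_0^t\!\big(\nu\norm{y_{mx}^{\epsilon}}^{2}+\delta\norm{y_m^{\epsilon}}_{L^4}^{4}\big)\,ds \le C\norm{y_0}^{2},
\]
which furnishes the $L^{\infty}(L^2)\cap L^2(H_{\{0\}}^1)\cap L^4(L^4)$ bounds. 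Testing with $y_{mt}^{\epsilon}$ and repeating the argument of Lemma~\ref{L1.2} supplies a uniform bound on $\norm{y_{mx}^{\epsilon}}$ in $L^{\infty}(L^2)$ and on $\norm{y_{mt}^{\epsilon}}$ in $L^2(L^2)$, the $H^3$-regularity of $y_0$ entering only to control the initial data in the time-differentiated tests. The $L^2(H^2)$ bound then follows by writing $\nu y_{mxx}^{\epsilon}=y_{mt}^{\epsilon}-\alpha y_m^{\epsilon}+\delta (y_m^{\epsilon})^{3}$ as in Remark~\ref{r1.2}, using the embedding $H^1\hookrightarrow L^{\infty}$ to bound $\norm{y_m^{\epsilon}}_{\infty}$.

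With these bounds I would extract a subsequence converging weak-$\ast$ in $L^{\infty}(L^2)$, weakly in $L^2(H^2\cap H_{\{0\}}^1)$ and in $L^4(L^4)$, with $y_{mt}^{\epsilon}\rightharpoonup y_t^{\epsilon}$ in $L^2(L^2)$. The crucial step is the passage to the limit in the cubic term and the boundary feedback: since $y_m^{\epsilon}$ is bounded in $L^2(H^2\cap H^1)$ while $y_{mt}^{\epsilon}$ is bounded in $L^2(L^2)$, the Aubin--Lions lemma gives strong convergence of $y_m^{\epsilon}$ in $L^2(L^2)$ (indeed, through $H^1\hookrightarrow\hookrightarrow C[0,1]$, strongly in $L^2(C[0,1])$), hence a.e.\ convergence, so that $(y_m^{\epsilon})^{3}\to (y^{\epsilon})^{3}$ in $L^{4/3}(L^{4/3})$ and the trace $y_m^{\epsilon}(\cdot,1)\to y^{\epsilon}(\cdot,1)$ converges, while the linear functional $u^{\epsilon}=-r(x,y_m^{\epsilon})$ passes to the limit by weak convergence. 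This identifies $y^{\epsilon}$ as a weak solution of \eqref{p1}--\eqref{p2} in the sense of \eqref{1.12}, and the regularity is inherited by weak lower semicontinuity.

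For uniqueness, let $y_1^{\epsilon},y_2^{\epsilon}$ be two solutions and set $w=y_1^{\epsilon}-y_2^{\epsilon}$; testing the difference of the weak formulations with $w$, the monotone structure of the reaction term, $\delta\big((y_1^{\epsilon})^{3}-(y_2^{\epsilon})^{3},\,w\big)=\delta\big(\big((y_1^{\epsilon})^{2}+y_1^{\epsilon}y_2^{\epsilon}+(y_2^{\epsilon})^{2}\big)w,\,w\big)\ge 0$, moves to the favorable side, while the boundary and feedback contributions are controlled exactly as in the a priori estimate under \textbf{(A1)}; Gr\"onwall's inequality then forces $w\equiv 0$. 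The main obstacle I anticipate is not existence per se but the careful bookkeeping of the nonlocal Robin/feedback term $\frac{\nu}{\epsilon}\big(y^{\epsilon}(t,1)-u^{\epsilon}(t)\big)\chi(1)$: the factor $1/\epsilon$ must not be allowed to degrade the estimates, and it is precisely assumption \textbf{(A1)} combined with Young's inequality applied to the trace (via $\norm{v}_{\infty}\le C\norm{v}_{1}$) that keeps the dissipation sign-definite; the compactness argument for the cubic nonlinearity is the other point requiring care.
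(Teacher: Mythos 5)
Your proposal is correct and takes essentially the same approach as the paper: the paper's proof is a one-line delegation to a Faedo--Galerkin procedure (citing its references) under assumption (A1), with the uniform energy estimates supplied by Lemmas \ref{L2.1}--\ref{L2.5} and uniqueness given by the monotonicity-plus-Gr\"onwall argument of Section 4, all of which your write-up reconstructs in detail. One bookkeeping slip: the energy arguments you should cite as being mirrored are the penalized ones (Lemmas \ref{L2.1}--\ref{L2.3}), not Lemmas \ref{L1.1}--\ref{L1.3} for the Dirichlet problem, though the computations you actually describe (absorbing the $\nu/\epsilon$ boundary term under (A1)) are indeed the penalized versions.
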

	\begin{proof} For the proof of existence see Appendix. The uniqueness of the closed loop system \eqref{p1}–\eqref{p2} is proved in Section $4$.
		\end{proof}

	As a consequence of Theorem \ref{th3.1} the following regularity results hold:
	\begin{align}
\label{r}
\norm{y^{\epsilon}}_{2}^{2}+\norm{y_{t}^{\epsilon}}_{1}^{2}&+\norm{y^{\epsilon}}_{L^{4}}^{4}+(y^{\epsilon}(t, 1))^{2}+(y_{t}^{\epsilon}(t, 1))^{2}\nonumber\\&+\int_{0}^{t}\left(\norm{y^{\epsilon}_{t}(s)}_{2}^{2}+\norm{y^{\epsilon}(s)}_{2}^{2}+\norm{y^{\epsilon}(s)}_{L^{4}}^{4}+(y_{t}^{\epsilon}(s, 1))^{2}+(y^{\epsilon}(s, 1))^{2}\right)ds\leq C(\norm{y_{0}}_{3}),
	\end{align}
	where $ y_{0}\in H^{3}\cap H_{\{0\}}^{1}$ and \(y_{0}(1)=-r( x, y_{0})\).

The results of the rest of the paper hold under assumption $(A1)$ and the regularity result \eqref{r}.
	\subsection{Stabilization.}
	This subsection contains the stabilization of the penalized feedback control problem. Further, some regularity results are established. These regularity results will help in the proof of error analysis to the state variable and control input for the penalized feedback control problem.
	\begin{lemma}
		\label{L2.1}
		Under assumption $(A1)$ and the regularity result \eqref{r}, the following estimate holds
		\begin{align*}
			\norm{y^{\epsilon}}^{2}+ \beta e^{-2\gamma t} \int_{0}^{t}e^{2\gamma s}\left(\norm{y_{x}^{\epsilon}(s)}^{2} + (y^{\epsilon}(s,1))^{2}\right)ds+ 2\delta e^{-2\gamma t}\int_{0}^{t} e^{2\gamma s}\norm{y^{\epsilon}(s)}_{L^{4}}^{4}ds\leq e^{-2\gamma t}\norm{y_{0}}^{2},
		\end{align*}
		where \( \beta= \min\{ (2\nu -\gamma-\frac{2\nu r^{2}}{3\epsilon}-\alpha), \frac{\nu}{\epsilon}\}>0. \)
	\end{lemma}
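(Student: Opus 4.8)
The plan is to mimic the energy-estimate strategy already used for Lemma~\ref{L1.1}, but now applied to the penalized (Robin) formulation \eqref{1.12}. First I would take the $L^{2}$-inner product of \eqref{p1} with $y^{\epsilon}$ and integrate by parts, using the Robin boundary condition to convert the boundary flux term $\nu y^{\epsilon}_{x}(t,1)y^{\epsilon}(t,1)$. Here the crucial structural difference from the Dirichlet case is that the boundary condition $\epsilon y^{\epsilon}_{x}(t,1)+y^{\epsilon}(t,1)=u^{\epsilon}(t)$ gives $\nu y^{\epsilon}_{x}(t,1)=\tfrac{\nu}{\epsilon}\bigl(u^{\epsilon}(t)-y^{\epsilon}(t,1)\bigr)$, so the boundary term splits into a manifestly dissipative piece $-\tfrac{\nu}{\epsilon}(y^{\epsilon}(t,1))^{2}$ and a control piece $\tfrac{\nu}{\epsilon}u^{\epsilon}(t)y^{\epsilon}(t,1)$. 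This would produce the identity
\begin{align*}
	\frac{1}{2}\frac{d}{dt}\norm{y^{\epsilon}}^{2}+\nu\norm{y^{\epsilon}_{x}}^{2}+\frac{\nu}{\epsilon}(y^{\epsilon}(t,1))^{2}+\delta\norm{y^{\epsilon}}_{L^{4}}^{4}=\frac{\nu}{\epsilon}u^{\epsilon}(t)y^{\epsilon}(t,1)+\alpha\norm{y^{\epsilon}}^{2}.
\end{align*}

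Next I would control the indefinite control term on the right. Substituting $u^{\epsilon}(t)=-r\int_{0}^{1}xy^{\epsilon}(t,x)\,dx=-r(x,y^{\epsilon})$ and applying Young's inequality (with a carefully chosen balancing constant) to $\tfrac{\nu}{\epsilon}u^{\epsilon}(t)y^{\epsilon}(t,1)=-\tfrac{r\nu}{\epsilon}(x,y^{\epsilon})\,y^{\epsilon}(t,1)$, I would bound it by a fraction of the boundary term $\tfrac{\nu}{\epsilon}(y^{\epsilon}(t,1))^{2}$ plus a multiple of $\tfrac{\nu r^{2}}{\epsilon}(x,y^{\epsilon})^{2}$. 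Using the Cauchy--Schwarz bound $(x,y^{\epsilon})^{2}\le\norm{x}^{2}\norm{y^{\epsilon}}^{2}=\tfrac{1}{3}\norm{y^{\epsilon}}^{2}$ converts this into a term proportional to $\tfrac{\nu r^{2}}{3\epsilon}\norm{y^{\epsilon}}^{2}$, which is exactly the quantity appearing in assumption (A1) and in the decay rate \eqref{2.1}. The split of the boundary term must be arranged so that a strictly positive multiple of $\tfrac{\nu}{\epsilon}(y^{\epsilon}(t,1))^{2}$ survives on the left, matching the $\tfrac{\nu}{\epsilon}$ entry in the definition of $\beta$.

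I would then multiply through by $2e^{2\gamma t}$, move the exponential inside the time derivative, and apply the Poincar\'e inequality $\norm{y^{\epsilon}}^{2}\le\tfrac{1}{2}\norm{y^{\epsilon}_{x}}^{2}$ to absorb the reaction term $\alpha\norm{y^{\epsilon}}^{2}$ and the $\gamma$-term into the gradient term. The coefficient of $\norm{y^{\epsilon}_{x}}^{2}$ then becomes $2\nu-\gamma-\tfrac{2\nu r^{2}}{3\epsilon}-\alpha$, which is nonnegative precisely under (A1) and \eqref{2.1}; this is the first entry of $\beta$. After this the left-hand side is a total time derivative plus nonnegative dissipation terms with coefficients bounded below by $\beta$ and $2\delta$, so integrating from $0$ to $t$ and multiplying by $e^{-2\gamma t}$ yields the claimed inequality, with the constant on the right being $1$ since no positive source remains.

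The main obstacle I anticipate is the bookkeeping in the Young's-inequality split of the control term: one must choose the splitting constant so that simultaneously (i) enough of the $\tfrac{\nu}{\epsilon}(y^{\epsilon}(t,1))^{2}$ dissipation is retained to give the $\tfrac{\nu}{\epsilon}$ in $\beta$, and (ii) the leftover $\tfrac{\nu r^{2}}{3\epsilon}\norm{y^{\epsilon}}^{2}$ contribution is exactly what the hypothesis $\tfrac{\alpha}{\nu}\le 2\tfrac{(3\epsilon-r^{2})}{3\epsilon}$, $r^{2}<3\epsilon$ is tailored to absorb. Verifying that the resulting two coefficients in $\beta$ are both strictly positive under (A1) and \eqref{2.1} — rather than merely nonnegative — is the delicate point, and it is what ties the admissible decay rate $\gamma$ in \eqref{2.1} to the problem parameters.
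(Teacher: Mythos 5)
Your proposal is correct and follows essentially the same route as the paper: testing with $y^{\epsilon}$ (the paper simply sets $\chi=y^{\epsilon}$ in the weak form \eqref{1.12}, which encodes exactly the Robin splitting you describe), then Young's inequality on $\tfrac{\nu}{\epsilon}u^{\epsilon}(t)y^{\epsilon}(t,1)$ with Cauchy--Schwarz giving $\tfrac{2\nu r^{2}}{3\epsilon}\norm{y^{\epsilon}}^{2}+\tfrac{\nu}{8\epsilon}(y^{\epsilon}(t,1))^{2}$, followed by multiplication by $2e^{2\gamma t}$, the Poincar\'e inequality $\norm{y^{\epsilon}}^{2}\le\tfrac{1}{2}\norm{y^{\epsilon}_{x}}^{2}$, and integration in time. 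Your anticipated "delicate point" is resolved exactly as in the paper: the Young splitting retains $\tfrac{7\nu}{4\epsilon}\ge\tfrac{\nu}{\epsilon}$ of the boundary dissipation, and the gradient coefficient $2\nu-\gamma-\tfrac{2\nu r^{2}}{3\epsilon}-\alpha$ is positive under (A1) and \eqref{2.1}.
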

	\begin{proof}
		Substituting $ \chi= y^{\epsilon} $ in \eqref{1.12}, we get
		\begin{align}
			\label{2.8}
			\frac{1}{2}\frac{d}{dt}\norm{y^{\epsilon}}^{2}+\nu\norm{y_{x}^{\epsilon}}^{2}+ \frac{\nu}{\epsilon}(y^{\epsilon}(t,1))^{2}+ \delta \norm{y^{\epsilon}}_{L^{4}}^{4}=\frac{\nu}{\epsilon}u^{\epsilon}(t)y^{\epsilon}(t,1)+ \alpha \norm{y^{\epsilon}}^{2}.
		\end{align}
		Using Young's inequality, the first term on the right side of \eqref{2.8} is constrained by
		\begin{align}
			\label{2.9}
			\frac{\nu}{\epsilon}u^{\epsilon}(t)y^{\epsilon}(t,1)=\frac{-\nu r}{\epsilon}y^{\epsilon}(t,1)\int_{0}^{1}xy^{\epsilon}(t,x)dx\leq \frac{2\nu r^{2}}{3\epsilon}\norm{y^{\epsilon}}^{2}+ \frac{\nu}{8\epsilon}(y^{\epsilon}(t,1))^{2}.
		\end{align}
		From \eqref{2.9} and \eqref{2.8}, we arrive at
		\begin{align*}
			\frac{1}{2}\frac{d}{dt}\norm{y^{\epsilon}}^{2}+\nu\norm{y_{x}^{\epsilon}}^{2}+ \frac{\nu}{\epsilon}(y^{\epsilon}(t,1))^{2}+ \delta \norm{y^{\epsilon}}_{L^{4}}^{4}\leq (\frac{2\nu  r^{2}}{3\epsilon} + \alpha)\norm{y^{\epsilon}}^{2}+ \frac{\nu}{8\epsilon}(y^{\epsilon}(t,1))^{2}.
		\end{align*}
		Multiplying to the above inequality by $ 2e^{2\gamma t}$ and using the Poincar\'e inequality, it follows that
		\begin{align}
			\label{2.110}
			\frac{d}{dt}(\norm{y^{\epsilon}}e^{\gamma t})^{2}+ \left( 2\nu -\gamma-\frac{2\nu r^{2}}{3\epsilon}-\alpha \right)e^{2\gamma t}\norm{y_{x}^{\epsilon}}^{2}+  \frac{7\nu}{4\epsilon}e^{2\gamma t}(y^{\epsilon}(t,1))^{2}+ 2\delta e^{2\gamma t}\norm{y^{\epsilon}}_{L^{4}}^{4}\leq 0.
		\end{align}
		Now, under the assumption ({A1}) with \( 0<\gamma \leq \nu - \alpha \), the term inside the parentheses remains positive, ensuring exponential decay.
		Integrating \eqref{2.110} with respect to time from $ 0 $ to $ t $, and multiplying the resulting inequality by \( e^{-2\gamma t} \), this completes the proof.
	\end{proof}
	\begin{remark}
		\label{r2.1}
		From the above lemma, we can write
		\begin{align*}
			\norm{y^{\epsilon}}^{2}+ \beta \int_{0}^{t}(\norm{y_{x}^{\epsilon}(s)}^{2}+ (y^{\epsilon}(s,1))^{2})ds + 2\delta \int_{0}^{t} \norm{y^{\epsilon}(s)}_{L^{4}}^{4}ds\leq \norm{y_{0}}^{2}.
		\end{align*}
		Using the Poincar\'e inequality and multiplying by $ e^{2\gamma t}$, it follows that after integration over $ [0, t] $
		\begin{align*}
			\int_{0}^{t}e^{2\gamma s}\norm{y^{\epsilon}(s)}^{2}\leq C \int_{0}^{t}e^{2\gamma s}\norm{y_{x}^{\epsilon}(s)}^{2}\leq C\norm{y_{0}}^{2}.
		\end{align*}
	\end{remark}
	\begin{lemma}
		\label{L2.2}
		Assume that assumption $(A1)$ and the regularity result \eqref{r} hold. Then, there exists a positive constant $ C $ such that
		\begin{align*}
			\nu\norm{y_{x}^{\epsilon}}^{2} + \frac{\nu}{2\epsilon} (y^{\epsilon}(t,1))^{2}+ \frac{\delta}{2}\norm{y^{\epsilon}}_{L^{4}}^{4} + \frac{e^{-2\gamma t}}{3}\int_{0}^{t}e^{2\gamma s}\norm{y_{t}^{\epsilon}(s)}^{2}ds\leq C(\norm{y_{0}}_{1})e^{-2\gamma t}.
		\end{align*}
	\end{lemma}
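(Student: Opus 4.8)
The plan is to test the weak formulation \eqref{1.12} with $\chi = y_t^{\epsilon}$, mirroring the choice made in Lemma~\ref{L1.2} for the Dirichlet problem. Using $\nu(y_x^{\epsilon}, y_{xt}^{\epsilon}) = \frac{\nu}{2}\frac{d}{dt}\norm{y_x^{\epsilon}}^{2}$, the boundary relation $\frac{\nu}{\epsilon}y^{\epsilon}(t,1)y_t^{\epsilon}(t,1) = \frac{\nu}{2\epsilon}\frac{d}{dt}(y^{\epsilon}(t,1))^{2}$, and $\delta((y^{\epsilon})^{3}, y_t^{\epsilon}) = \frac{\delta}{4}\frac{d}{dt}\norm{y^{\epsilon}}_{L^{4}}^{4}$, this yields the energy identity
\begin{align*}
\frac{1}{2}\frac{d}{dt}\left(\nu\norm{y_x^{\epsilon}}^{2} + \frac{\nu}{\epsilon}(y^{\epsilon}(t,1))^{2} + \frac{\delta}{2}\norm{y^{\epsilon}}_{L^{4}}^{4}\right) + \norm{y_t^{\epsilon}}^{2} = \frac{\nu}{\epsilon}u^{\epsilon}(t)y_t^{\epsilon}(t,1) + \alpha(y^{\epsilon}, y_t^{\epsilon}),
\end{align*}
in which the boundary energy $\frac{\nu}{2\epsilon}(y^{\epsilon}(t,1))^{2}$ appears on the left exactly as demanded by the statement, and $\norm{y_t^{\epsilon}}^{2}$ supplies the dissipation.

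The main obstacle is the boundary term $\frac{\nu}{\epsilon}u^{\epsilon}(t)y_t^{\epsilon}(t,1)$. Unlike the Dirichlet case of Lemma~\ref{L1.2}, where $y_t(t,1)=-r(x,y_t)$ makes the boundary traces cancel pointwise, the Robin condition provides no such identity, and the trace $y_t^{\epsilon}(t,1)$ carrying the factor $\epsilon^{-1}$ cannot be controlled with the norms available in this lemma (there is no $\int\norm{y_{xt}^{\epsilon}}^{2}$ on the left to feed a trace estimate). I would therefore defer the estimate: multiply the identity by $2e^{2\gamma t}$, use the product rule to move $e^{2\gamma t}$ inside each time derivative at the cost of a $-2\gamma e^{2\gamma t}$ term, integrate over $[0,t]$, and then integrate the resulting term $\frac{2\nu}{\epsilon}\int_{0}^{t}e^{2\gamma s}u^{\epsilon}(s)y_s^{\epsilon}(s,1)\,ds$ \emph{by parts in time}. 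Since $u^{\epsilon}(s)=-r(x,y^{\epsilon}(s))$ and hence $u_s^{\epsilon}(s)=-r(x,y_s^{\epsilon}(s))$, this trades the uncontrollable trace $y_s^{\epsilon}(s,1)$ for the endpoint contribution $\frac{2\nu}{\epsilon}e^{2\gamma t}u^{\epsilon}(t)y^{\epsilon}(t,1)$ plus an interior integral involving only $(x,y^{\epsilon})$, $(x,y_s^{\epsilon})$ and the trace $y^{\epsilon}(s,1)$.

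Each piece is then closed. By Young's inequality the endpoint term is bounded by $\frac{\nu}{2\epsilon}e^{2\gamma t}(y^{\epsilon}(t,1))^{2}+Ce^{2\gamma t}\norm{y^{\epsilon}(t)}^{2}$, where the first summand is absorbed into the left-hand boundary energy (reducing its coefficient from $\frac{\nu}{\epsilon}$ to precisely $\frac{\nu}{2\epsilon}$) and the second is bounded by $C\norm{y_{0}}^{2}$ via Lemma~\ref{L2.1}. The interior integral and the term $2\alpha\int_{0}^{t}e^{2\gamma s}(y^{\epsilon},y_s^{\epsilon})\,ds$ are split by Young's inequality into multiples of $\int_{0}^{t}e^{2\gamma s}\norm{y_s^{\epsilon}}^{2}\,ds$, which are absorbed into the dissipation $2\int_{0}^{t}e^{2\gamma s}\norm{y_t^{\epsilon}}^{2}\,ds$ so that a positive fraction (namely $\tfrac{1}{3}$) survives on the left, together with integrals of $\norm{y^{\epsilon}}^{2}$ and $(y^{\epsilon}(s,1))^{2}$ that are controlled by Lemma~\ref{L2.1} and Remark~\ref{r2.1}. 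The $2\gamma$-weighted integrals produced by the exponential are handled the same way, and the initial contributions $\nu\norm{y_{0x}}^{2}$, $\frac{\nu}{\epsilon}(y_{0}(1))^{2}$ and $r(x,y_{0})y_{0}(1)$ account for the $H^{1}$ norm $\norm{y_{0}}_{1}^{2}$ on the right, which is exactly why the hypothesis $y_{0}\in H^{1}$ is needed. Multiplying the final inequality by $e^{-2\gamma t}$ gives the claim; the scheme closes only because assumption (A1) keeps the $\gamma$-dependent coefficients positive, and the delicate point is the bookkeeping of the Young constants so that the absorbed share of the dissipation and of the boundary energy leaves exactly the coefficients $\tfrac{1}{3}$ and $\tfrac{\nu}{2\epsilon}$ stated in the lemma.
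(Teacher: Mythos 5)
Your proposal is correct and follows essentially the same route as the paper: test \eqref{1.12} with $\chi=y_t^{\epsilon}$, move the time derivative off the uncontrollable trace $y_t^{\epsilon}(t,1)$ in the term $\frac{\nu}{\epsilon}u^{\epsilon}(t)y_t^{\epsilon}(t,1)$ (the paper does this via the product rule $u^{\epsilon}y_t^{\epsilon}(t,1)=\frac{d}{dt}(u^{\epsilon}y^{\epsilon}(t,1))-u_t^{\epsilon}y^{\epsilon}(t,1)$ before multiplying by $e^{2\gamma t}$, while you integrate by parts in time after — the same identity, yielding the same endpoint and interior terms), then absorb the endpoint term into the boundary energy and the $u_t^{\epsilon}=-r(x,y_t^{\epsilon})$ terms into the dissipation via Young's inequality, closing with Lemma \ref{L2.1}. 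The bookkeeping you describe (coefficients $\frac{\nu}{2\epsilon}$ and $\frac{1}{3}$, initial-data terms, $\gamma$-weighted integrals) matches the paper's proof.
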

	\begin{proof}
		Choose $ \chi =y_{t}^{\epsilon} $ in \eqref{1.12}, we obtain 
		\begin{align*}
			\frac{1}{2} \frac{d}{dt}\left(\nu\norm{y_{x}^{\epsilon}}^{2} + \frac{\nu}{\epsilon} (y^{\epsilon}(t,1))^{2}+ \frac{\delta}{2}\norm{y^{\epsilon}}_{L^{4}}^{4}\right) +\norm{y_{t}^{\epsilon}}^{2} = \frac{\nu}{\epsilon}\frac{d}{dt}\Big(u^{\epsilon}(t) y^{\epsilon}(t,1)\Big)- \Big(u_{t}^{\epsilon}(t) y^{\epsilon}(t,1)\Big) +\alpha (y^{\epsilon}, y_{t}^{\epsilon}).
		\end{align*}
		An application of the Young's inequality, we arrive at
		\begin{align*}
			\frac{1}{2} \frac{d}{dt}\left(\nu\norm{y_{x}^{\epsilon}}^{2} + \frac{\nu}{\epsilon} (y^{\epsilon}(t,1))^{2}+ \frac{\delta}{2}\norm{y^{\epsilon}}_{L^{4}}^{4}\right) + \frac{1}{6}\norm{y_{t}^{\epsilon}}^{2}\leq \frac{\nu}{\epsilon}\frac{d}{dt}\Big(u^{\epsilon}(t) y^{\epsilon}(t,1)\Big) + \frac{\alpha^{2}}{2}\norm{y^{\epsilon}}^{2}+\frac{\nu^{2} r^{2}}{4\epsilon^{2}} (y^{\epsilon}(t,1))^{2}.
		\end{align*}
       Multiplying by $ 2e^{2\gamma t} $ and applying Young's inequality to the resulting inequality, then integrating with respect to time over $ [0, t]$, we arrive at
		\begin{align*}
			\nu\norm{y_{x}^{\epsilon}}^{2} + \frac{\nu}{2\epsilon} (y^{\epsilon}(t,1))^{2}+ \frac{\delta}{2}\norm{y^{\epsilon}}_{L^{4}}^{4} &+ \frac{e^{-2\gamma t}}{3}\int_{0}^{t}e^{2\gamma s}\norm{y_{t}^{\epsilon}(s)}^{2}ds\\[1mm]&\leq Ce^{-2\gamma t}\int_{0}^{t}\left(\norm{y^{\epsilon}(s)}^{2}+\norm{y_{x}^{\epsilon}(s)}^{2}+ (y^{\epsilon}(s,1))^{2} + \norm{y^{\epsilon}(s)}_{L^{4}}^{4}\right)ds\\[1mm]& \ \ +\frac{\nu r^{2}}{6\epsilon}\norm{y^{\epsilon}}^{2}+ Ce^{-2\gamma t}\norm{y_{0}}_{1}^{2}.
		\end{align*}
		An application of Lemma \ref{L2.1} completes the proof.
	\end{proof}
	
	\begin{lemma}
		\label{L2.3}
		Under assumption $(A1)$ and the regularity result \eqref{r}, there exists a positive constant $ C $ such that
		\begin{align*}
			\norm{y_{t}^{\epsilon}}^{2} + 2\nu e^{-2\gamma t}\int_{0}^{t}e^{2\gamma s} \norm{y_{xt}^{\epsilon}(s)}^{2}ds&+ 6\delta e^{-2\gamma t}\int_{0}^{t}e^{2\gamma s}\norm{(y_{t}^{\epsilon}y^{\epsilon})(s)}^{2} ds\\[1mm]&+\frac{\nu}{\epsilon}e^{-2\gamma t}\int_{0}^{t}e^{2\gamma s}(y_{t}^{\epsilon}(s, 1))^{2} ds \leq C(\norm{y_{0}}_{1})e^{-2\gamma t}\norm{y_{0}}_{2}^{2}.
		\end{align*}
	\end{lemma}
	\begin{proof}
		Differentiating \eqref{p1} with respect to time $ t $ and forming $ L^{2}$-inner product with $ y_{t}^{\epsilon} $, we obtain
		\begin{align*}
			\frac{1}{2}\frac{d}{dt}\norm{y_{t}^{\epsilon}}^{2} + \nu \norm{y_{xt}^{\epsilon}}^{2}+ 3\delta\int_{0}^{1}(y_{t}^{\epsilon}y^{\epsilon})^{2}dx = \nu y_{xt}^{\epsilon}(t, 1)y_{t}(t, 1) + \alpha\norm{y_{t}^{\epsilon}}^{2}.
		\end{align*}
		Applying the penalized boundary condition and using the Young's inequality, we arrive at
		\begin{align*}
			\frac{1}{2}\frac{d}{dt}\norm{y_{t}^{\epsilon}}^{2} + \nu \norm{y_{xt}^{\epsilon}}^{2}+ 3\delta\int_{0}^{1}(y_{t}^{\epsilon}y^{\epsilon})^{2}dx + \frac{\nu}{2\epsilon}(y_{t}^{\epsilon}(t, 1))^{2}\leq (\frac{\nu r^{2}}{6\epsilon} + \alpha)\norm{y_{t}^{\epsilon}}^{2}.
		\end{align*}
        Multiplying by $ 2e^{2\gamma t} $ and integrating with respect to time from $ 0 $ to $ t $, then using Lemmas \ref{L2.1} - \ref{L2.2}, we arrive at
		\begin{align}
			\label{2.112}
			\nonumber	\norm{y_{t}^{\epsilon}}^{2} &+ 2\nu e^{-2\gamma t}\int_{0}^{t}e^{2\gamma s} \norm{y_{xt}^{\epsilon}(s)}^{2}ds+ 6\delta e^{-2\gamma t}\int_{0}^{t}e^{2\gamma s}\left(\int_{0}^{1}(y_{t}^{\epsilon}y^{\epsilon})^{2}(s)dx\right)ds \\[1mm]& +\frac{\nu}{\epsilon}e^{-2\gamma t}\int_{0}^{t}e^{2\gamma s}(y_{t}^{\epsilon}(s, 1))^{2} ds \leq Ce^{-2\gamma t}\left(\norm{y_{0}}_{1}^{2}+ \norm{y_{0}}_{1}^{4}\right)+ e^{-2\gamma t}\norm{y_{t}^{\epsilon}(0)}^{2}.
		\end{align}
		Using \((a+b+c)^{2}\leq 3(a^{2}+b^{2}+c^{2})\) with Sobolev embedding, we can write from \eqref{p1} 
		\begin{align}
			\label{2.113}
			\norm{y_{t}^{\epsilon}}^{2}\leq 3\nu^{2}\norm{y_{xx}^{\epsilon}}^{2}+ 3\alpha^{2}\norm{y^{\epsilon}}^{2}+ 3\delta^{2}\norm{y^{\epsilon}}_{L^{6}}^{6}\leq 3\nu^{2}\norm{y_{xx}^{\epsilon}}^{2}+ 3\alpha^{2}\norm{y^{\epsilon}}^{2}+ C\norm{y^{\epsilon}}_{1}^{6} .
		\end{align}
		Using \eqref{2.113} at $ t=0 $, and substituting this bound into \eqref{2.112} completes the proof.
	\end{proof}

\begin{lemma}
		\label{L2.4}
		Let assumption $(A1)$ and the regularity result \eqref{r} hold. Then there exists a positive constant $C$ such that
		\begin{align*}
			\norm{y_{x}^{\epsilon}}^{2}+\frac{1}{\epsilon}(y^{\epsilon}(t, 1))^{2}+ 2\nu e^{-2\gamma t}\int_{0}^{t}e^{2\gamma s}\norm{y_{xx}^{\epsilon}(s)}^{2}ds&+ 6\delta e^{-2\gamma t}\int_{0}^{t} e^{2\gamma s}\norm{(y_{x}^{\epsilon} y^{\epsilon})(s)}^{2}ds\\[1mm]&+\frac{\alpha}{\epsilon}e^{-2\gamma t}\int_{0}^{t}e^{2\gamma s}(y^{\epsilon}(s, 1))^{4}ds\leq C(\norm{y_{0}}_{1}) e^{-2\gamma t}.
		\end{align*}
	\end{lemma}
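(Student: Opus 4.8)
The plan is to obtain this $H^{1}$/$H^{2}$-type estimate by testing the strong form \eqref{p1} against $-y_{xx}^{\epsilon}$, which is admissible because the regularity Theorem gives $y^{\epsilon}(t)\in H^{2}(0,1)$ for a.e. $t$. Forming the $L^{2}$-inner product of \eqref{p1} with $-y_{xx}^{\epsilon}$ and integrating by parts in $x$ (using $y^{\epsilon}(t,0)=0$, hence $y_{t}^{\epsilon}(t,0)=0$), I expect the identity
\[
\frac{1}{2}\frac{d}{dt}\norm{y_{x}^{\epsilon}}^{2}+\nu\norm{y_{xx}^{\epsilon}}^{2}+3\delta\norm{y_{x}^{\epsilon}y^{\epsilon}}^{2}=y_{t}^{\epsilon}(t,1)y_{x}^{\epsilon}(t,1)-\alpha(y^{\epsilon},y_{xx}^{\epsilon})+\delta(y^{\epsilon}(t,1))^{3}y_{x}^{\epsilon}(t,1),
\]
where the favorable term $3\delta\norm{y_{x}^{\epsilon}y^{\epsilon}}^{2}$ and the boundary remainder $\delta(y^{\epsilon}(t,1))^{3}y_{x}^{\epsilon}(t,1)$ arise from integrating $\delta((y^{\epsilon})^{3},-y_{xx}^{\epsilon})$ by parts. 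The factor $2\nu$ in the statement dictates that $-\alpha(y^{\epsilon},y_{xx}^{\epsilon})$ be integrated by parts (rather than absorbed against $\norm{y_{xx}^{\epsilon}}^{2}$ by Young's inequality), so that the full $\nu\norm{y_{xx}^{\epsilon}}^{2}$ is retained.

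Next I would remove every boundary derivative via the Robin condition written as $y_{x}^{\epsilon}(t,1)=\epsilon^{-1}(u^{\epsilon}(t)-y^{\epsilon}(t,1))$. This is the step that creates the two penalty-weighted quantities on the left of the lemma: the piece $y_{t}^{\epsilon}(t,1)y^{\epsilon}(t,1)=\tfrac12\frac{d}{dt}(y^{\epsilon}(t,1))^{2}$ produces $\tfrac{1}{2\epsilon}\frac{d}{dt}(y^{\epsilon}(t,1))^{2}$, i.e. the pointwise $\tfrac1\epsilon(y^{\epsilon}(t,1))^{2}$ term, while the cubic boundary remainder yields the favorable quartic term $\tfrac{\delta}{\epsilon}(y^{\epsilon}(t,1))^{4}$ with the correct (left-hand) sign. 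The delicate contribution is $\tfrac1\epsilon u^{\epsilon}(t)y_{t}^{\epsilon}(t,1)$: to avoid estimating $y_{t}^{\epsilon}(t,1)$ in $L^{2}$ (which would demand $y_{0}\in H^{2}$ via Lemma~\ref{L2.3}), I would reuse the device of Lemma~\ref{L2.2} and write $u^{\epsilon}y_{t}^{\epsilon}(t,1)=\frac{d}{dt}(u^{\epsilon}y^{\epsilon}(t,1))-u_{t}^{\epsilon}y^{\epsilon}(t,1)$, converting it into a total time derivative plus a term controlled by $\norm{y_{t}^{\epsilon}}$ through $u_{t}^{\epsilon}=-r(x,y_{t}^{\epsilon})$ together with the trace bound $|y^{\epsilon}(t,1)|\le C\norm{y^{\epsilon}}_{1}$.

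I would then multiply the resulting differential inequality by $2e^{2\gamma t}$, collect the total time derivatives into $\frac{d}{dt}\big(e^{2\gamma t}(\norm{y_{x}^{\epsilon}}^{2}+\tfrac1\epsilon(y^{\epsilon}(t,1))^{2})\big)$ while paying the usual $2\gamma$-corrections, and bound the remaining right-hand side by Young's inequality: the $\epsilon^{-1}u^{\epsilon}$-products against $\norm{y^{\epsilon}}$, the endpoint-in-time contribution $\tfrac1\epsilon u^{\epsilon}(t)y^{\epsilon}(t,1)$ partly absorbed into the left-hand $\tfrac1\epsilon(y^{\epsilon}(t,1))^{2}$, and the cross term $\tfrac\delta\epsilon(y^{\epsilon}(t,1))^{3}u^{\epsilon}$ split by Young's inequality with exponents $4$ and $4/3$ so that its quartic part is absorbed by the left-hand quartic term and $(u^{\epsilon})^{4}\le C\norm{y^{\epsilon}}^{4}$ is dominated using $\norm{y^{\epsilon}}^{2}\le e^{-2\gamma t}\norm{y_{0}}^{2}$ from Lemma~\ref{L2.1}. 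Integrating in time from $0$ to $t$ and multiplying by $e^{-2\gamma t}$, the surviving integrals are $\int_{0}^{t}e^{2\gamma s}(\norm{y^{\epsilon}}^{2}+\norm{y_{x}^{\epsilon}}^{2}+(y^{\epsilon}(s,1))^{2})\,ds$ and $\int_{0}^{t}e^{2\gamma s}\norm{y_{t}^{\epsilon}}^{2}\,ds$, all of order $Ce^{-2\gamma t}\norm{y_{0}}_{1}^{2}$ by Lemmas~\ref{L2.1} and~\ref{L2.2}; the initial data $\norm{y_{0x}}^{2}$ and $(y^{\epsilon}(0,1))^{2}=(y_{0}(1))^{2}$ are controlled by $\norm{y_{0}}_{1}^{2}$ through the Sobolev embedding.

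The main obstacle is the boundary bookkeeping forced by the test function $-y_{xx}^{\epsilon}$, which injects $y_{t}^{\epsilon}(t,1)$ and $(y^{\epsilon}(t,1))^{3}$ at the endpoint. One must route the former through a time integration by parts so as to remain within the $y_{0}\in H^{1}$ regularity and invoke only Lemmas~\ref{L2.1}–\ref{L2.2}, and simultaneously verify that the genuinely $\epsilon^{-1}$-weighted quantities generated at the boundary enter with the sign that keeps them on the left-hand side. Tracking these penalty-scaled signs, rather than the interior estimates, is where the care is required.
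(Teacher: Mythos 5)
Your proof is correct and shares the paper's skeleton---test \eqref{p1} with $-y_{xx}^{\epsilon}$, integrate by parts to reach the same identity, eliminate $y_{x}^{\epsilon}(t,1)$ through the Robin condition so that $\tfrac{1}{2\epsilon}\tfrac{d}{dt}(y^{\epsilon}(t,1))^{2}$ and the favorable quartic boundary term land on the left, then weight by $2e^{2\gamma t}$ and integrate---but it departs from the paper in one substantive step, and the departure is an improvement. For the contribution $\tfrac{1}{\epsilon}u^{\epsilon}(t)\,y_{t}^{\epsilon}(t,1)$, the paper applies Young's inequality, producing $\tfrac{1}{2\epsilon}(y_{t}^{\epsilon}(t,1))^{2}$ on the right, and then controls $\int_{0}^{t}e^{2\gamma s}(y_{t}^{\epsilon}(s,1))^{2}\,ds$ by invoking Lemma \ref{L2.3}. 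That lemma, however, requires $y_{0}\in H^{2}(0,1)$ and its bound carries $\norm{y_{0}}_{2}^{2}$, so the paper's argument as written actually establishes the estimate only for $H^{2}$ data, not under the stated hypothesis $y_{0}\in H^{1}(0,1)$. Your device---writing $u^{\epsilon}y_{t}^{\epsilon}(t,1)=\frac{d}{dt}\bigl(u^{\epsilon}y^{\epsilon}(t,1)\bigr)-u_{t}^{\epsilon}y^{\epsilon}(t,1)$ and exploiting $u_{t}^{\epsilon}=-r(x,y_{t}^{\epsilon})$ so that only the interior norm $\norm{y_{t}^{\epsilon}}$ enters---closes the estimate using only Lemmas \ref{L2.1} and \ref{L2.2}, which hold for $H^{1}$ data; the price is a worse $\epsilon$-dependence in the constant (an extra factor of order $\epsilon^{-1}$ from the cross term $\epsilon^{-1}\norm{y_{t}^{\epsilon}}\norm{y_{x}^{\epsilon}}$), which is harmless here since the constant in Lemma \ref{L2.4} is not claimed to be uniform in $\epsilon$. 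In short, the paper's route is marginally shorter but inconsistent with its own hypothesis, while yours is hypothesis-consistent and is the version one should record. Two cosmetic points: absorbing the endpoint term $\tfrac{2}{\epsilon}e^{2\gamma t}u^{\epsilon}(t)y^{\epsilon}(t,1)$ into the left side leaves $\tfrac{1}{2\epsilon}(y^{\epsilon}(t,1))^{2}$ rather than $\tfrac{1}{\epsilon}(y^{\epsilon}(t,1))^{2}$, a harmless constant; and both your argument and the paper's produce the quartic boundary integral with coefficient proportional to $\delta/\epsilon$, so the $\alpha/\epsilon$ appearing in the statement of Lemma \ref{L2.4} is evidently a typo.
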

	\begin{proof}
		Taking the $L^{2}$-inner product between \eqref{p1} and $-y_{xx}^{\epsilon}$, and integrating by parts, we obtain
		\begin{align}
			\label{2.114}
			\frac{1}{2}\frac{d}{dt}\norm{y_{x}^{\epsilon}}^{2}+ \nu \norm{y_{xx}^{\epsilon}}^{2}+3\delta\int_{0}^{1}(y_{x}^{\epsilon})^{2}(y^{\epsilon})^{2}dx&=\alpha\norm{y_{x}^{\epsilon}}^{2}- \alpha y^{\epsilon}(t, 1)y_{x}^{\epsilon}(1)\nonumber\\& \quad+ \delta(y^{\epsilon}(t, 1))^{3}y_{x}^{\epsilon}(1)+y_{t}^{\epsilon}(t, 1)y_{x}^{\epsilon}(1).
		\end{align}
		The second term on the right hand side of \eqref{2.114} is bounded by 
		\begin{align*}
			-\alpha y^{\epsilon}(t, 1)y_{x}^{\epsilon}(1)= \frac{\alpha}{\epsilon}(y^{\epsilon}(t, 1))^{2}+ \frac{\alpha}{\epsilon}y^{\epsilon}(t, 1) u^{\epsilon}(t)\leq \frac{3\alpha}{2\epsilon}(y^{\epsilon}(t, 1))^{2}+\frac{\alpha r^{2}}{6\epsilon}\norm{y^{\epsilon}}^{2}.
		\end{align*}
		On the right hand side of \eqref{2.114}, the third term is estimated by
		\begin{align*}
			\delta(y^{\epsilon}(t, 1))^{3}y_{x}^{\epsilon}(1)= -\frac{\delta}{\epsilon}(y^{\epsilon}(t, 1))^{4}+ \frac{\delta}{\epsilon}(y^{\epsilon}(t, 1))^{3}u^{\epsilon}(t)\leq-\frac{\delta}{2\epsilon}(y^{\epsilon}(t, 1))^{4}+\frac{\delta r^{2}}{6\epsilon}(y^{\epsilon}(t, 1))^{2}\norm{y^{\epsilon}}^{2}.
		\end{align*}
		The last term on the right of \eqref{2.114} can be written with the help of the Young's inequality
		\begin{align*}
			y_{t}^{\epsilon}(t, 1)y_{x}(1)\leq -\frac{1}{2\epsilon}\frac{d}{dt}(y^{\epsilon}(t, 1))^{2} +\frac{1}{2\epsilon}(y_{t}^{\epsilon}(t, 1))^{2}+ \frac{r^{2}}{6\epsilon}\norm{y^{\epsilon}}^{2}.
		\end{align*}
		Substituting the above estimates into \eqref{2.114}, we observe that
		\begin{align*}
			\frac{1}{2}\frac{d}{dt}\Big(\norm{y_{x}^{\epsilon}}^{2}&+\frac{1}{\epsilon}(y^{\epsilon}(t, 1))^{2}\Big) + \nu \norm{y_{xx}^{\epsilon}}^{2}+3\delta \int_{0}^{1}(y_{x}^{\epsilon})^{2}(y^{\epsilon})^{2}dx+ \frac{\delta}{2\epsilon}(y^{\epsilon}(t, 1))^{4}\\[1mm]&\leq \alpha\norm{y_{x}^{\epsilon}}^{2}+  \frac{ r^{2}}{6\epsilon}\left(\alpha + \delta (y^{\epsilon}(t, 1))^{2}\right) \norm{y^{\epsilon}}^{2}  \ \ + \frac{ r^{2}}{6\epsilon}\norm{y^{\epsilon}}^{2}+ \frac{1}{2\epsilon}(y_{t}^{\epsilon}(t, 1))^{2}+\frac{3\alpha}{2\epsilon}(y^{\epsilon}(t, 1))^{2}.
		\end{align*}
		Multiplying by $ 2e^{2\gamma t} $, integrating over $[0,t]$, and multiplying the resulting inequality by $e^{-2\gamma t}$, we obtain the desired result by applying Lemmas \ref{L2.1} and \ref{L2.3}.
	\end{proof}
    
	\begin{lemma}
		\label{L2.5}
		Suppose that assumption $(A1)$ and the regularity result \eqref{r} hold. Then, there exists a positive constant $ C $ such that
		\begin{align*}
			\norm{y_{xt}^{\epsilon}}^{2}+ \nu e^{-2\gamma t}\int_{0}^{t}e^{2\gamma s}\norm{y_{xxt}^{\epsilon}(s)}^{2}ds+\frac{1}{\epsilon}(y_{t}^{\epsilon}(t, 1))^{2}\leq C(\norm{y_{0}}_{1})e^{-2\gamma t}\norm{y_{0}}_{3}^{2}.
		\end{align*}
	\end{lemma}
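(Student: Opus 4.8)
The plan is to mimic the energy method of the preceding lemmas, now at one order higher in both space and time. I would start from the time-differentiated equation \eqref{2.111}, namely \(y_{tt}^{\epsilon}=\nu y_{xxt}^{\epsilon}+\alpha y_{t}^{\epsilon}-3\delta y_{t}^{\epsilon}(y^{\epsilon})^{2}\), and take the \(L^{2}\)-inner product with \(-y_{xxt}^{\epsilon}\) (rather than with \(y_{tt}^{\epsilon}\) as in the Dirichlet Lemma~\ref{L1.4}), since the target quantities \(\norm{y_{xt}^{\epsilon}}^{2}\) and \(\nu\int e^{2\gamma s}\norm{y_{xxt}^{\epsilon}}^{2}\,ds\) dictate this choice. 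Integrating by parts in \(x\) and using \(y_{tt}^{\epsilon}(t,0)=0\) produces \(\tfrac12\tfrac{d}{dt}\norm{y_{xt}^{\epsilon}}^{2}+\nu\norm{y_{xxt}^{\epsilon}}^{2}\) on the left, together with a boundary contribution \(-y_{tt}^{\epsilon}(t,1)y_{xt}^{\epsilon}(t,1)\) at \(x=1\). The two interior terms on the right, \(|\alpha(y_{t}^{\epsilon},y_{xxt}^{\epsilon})|\) and \(3\delta|(y_{t}^{\epsilon}(y^{\epsilon})^{2},y_{xxt}^{\epsilon})|\), I would bound by Young's inequality, keeping \(\tfrac{\nu}{2}\norm{y_{xxt}^{\epsilon}}^{2}\) on the left and leaving \(C\norm{y_{t}^{\epsilon}}^{2}\) and \(C\norm{y^{\epsilon}}_{\infty}^{4}\norm{y_{t}^{\epsilon}}^{2}\), both controlled by Lemmas~\ref{L2.1}--\ref{L2.3} and the embedding \(\norm{y^{\epsilon}}_{\infty}\le C\norm{y^{\epsilon}}_{1}\).

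The boundary term is the crux. Using the time-differentiated Robin condition \(\epsilon y_{xt}^{\epsilon}(t,1)+y_{t}^{\epsilon}(t,1)=u_{t}^{\epsilon}(t)\) with \(u_{t}^{\epsilon}=-r(x,y_{t}^{\epsilon})\), I substitute \(y_{xt}^{\epsilon}(t,1)=\tfrac1\epsilon(u_{t}^{\epsilon}-y_{t}^{\epsilon}(t,1))\) to get \(-y_{tt}^{\epsilon}(t,1)y_{xt}^{\epsilon}(t,1)=\tfrac{1}{2\epsilon}\tfrac{d}{dt}(y_{t}^{\epsilon}(t,1))^{2}-\tfrac1\epsilon y_{tt}^{\epsilon}(t,1)u_{t}^{\epsilon}\); the first piece is exactly the dissipative boundary term \(\tfrac1\epsilon(y_{t}^{\epsilon}(t,1))^{2}\) appearing in the statement. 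The obstacle is the residual \(\tfrac1\epsilon y_{tt}^{\epsilon}(t,1)u_{t}^{\epsilon}\): no earlier estimate bounds the second-order trace \(y_{tt}^{\epsilon}(t,1)\). I would resolve this only after multiplying by \(2e^{2\gamma t}\) and integrating in time, by integrating this term by parts in \(t\). This moves the extra time derivative off the trace: \(\tfrac1\epsilon\int e^{2\gamma s}y_{tt}^{\epsilon}(s,1)u_{t}^{\epsilon}\,ds\) becomes a boundary-in-time term \(\tfrac1\epsilon[e^{2\gamma s}u_{t}^{\epsilon}y_{t}^{\epsilon}(s,1)]_{0}^{t}\) plus integrals against \((e^{2\gamma s}u_{t}^{\epsilon})'=e^{2\gamma s}(2\gamma u_{t}^{\epsilon}+u_{tt}^{\epsilon})\), where crucially \(u_{tt}^{\epsilon}=-r(x,y_{tt}^{\epsilon})\) is now an \emph{interior} pairing rather than a boundary trace.

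It then remains to close the estimate. The boundary-in-time term is handled by Young's inequality, absorbing a multiple of \(\tfrac1\epsilon e^{2\gamma t}(y_{t}^{\epsilon}(t,1))^{2}\) into the left side and leaving \(\tfrac{C}{\epsilon}e^{2\gamma t}\norm{y_{t}^{\epsilon}}^{2}\), controlled by Lemma~\ref{L2.3}, while the contribution at \(s=0\) is initial data. For the interior integral \(\tfrac{r}{\epsilon}\int e^{2\gamma s}(x,y_{tt}^{\epsilon})y_{t}^{\epsilon}(s,1)\,ds\) I would apply Young's inequality and then the pointwise bound \(\norm{y_{tt}^{\epsilon}}^{2}\le 3\nu^{2}\norm{y_{xxt}^{\epsilon}}^{2}+C\norm{y_{t}^{\epsilon}}^{2}+C\norm{y^{\epsilon}}_{\infty}^{4}\norm{y_{t}^{\epsilon}}^{2}\) read off from \eqref{2.111}; choosing the Young parameter small lets me absorb the resulting \(\int e^{2\gamma s}\norm{y_{xxt}^{\epsilon}}^{2}\,ds\) into the \(\nu\int e^{2\gamma s}\norm{y_{xxt}^{\epsilon}}^{2}\,ds\) already on the left, with \(\int e^{2\gamma s}(y_{t}^{\epsilon}(s,1))^{2}\,ds\) and \(\int e^{2\gamma s}\norm{y_{t}^{\epsilon}}^{2}\,ds\) supplied by Lemmas~\ref{L2.2}--\ref{L2.3}.

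The initial value \(\norm{y_{0xt}^{\epsilon}}^{2}\) is estimated by differentiating \eqref{p1} in \(x\) at \(t=0\), which together with the right-hand sides carried over from Lemmas~\ref{L2.2}--\ref{L2.4} produces the combination \(\norm{y_{0}}_{3}^{2}+\norm{y_{0}}^{2}+\norm{y_{0}}_{L^{4}}^{4}\) in the statement. After assembling all bounds and multiplying through by \(e^{-2\gamma t}\), the claim follows. I expect the genuine difficulty to be precisely the control of the second-order boundary trace \(y_{tt}^{\epsilon}(t,1)\): it cannot be estimated by any interior norm available, which is what forces the integration by parts in time followed by the small-parameter absorption of \(\norm{y_{xxt}^{\epsilon}}^{2}\) described above.
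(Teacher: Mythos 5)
Your proposal is correct and follows essentially the same route as the paper: the paper also tests the time-differentiated equation \eqref{2.111} with \(-y_{xxt}^{\epsilon}\), converts the boundary term via the differentiated Robin condition into \(\tfrac{1}{2\epsilon}\tfrac{d}{dt}(y_{t}^{\epsilon}(t,1))^{2}\) plus \(\tfrac{1}{\epsilon}y_{tt}^{\epsilon}(t,1)u_{t}^{\epsilon}\), and removes the second-order trace by the product rule \(\tfrac{1}{\epsilon}\tfrac{d}{dt}\bigl(y_{t}^{\epsilon}(t,1)u_{t}^{\epsilon}\bigr)-\tfrac{1}{\epsilon}y_{t}^{\epsilon}(t,1)u_{tt}^{\epsilon}\) (your integration by parts in time is the same manipulation), then substitutes \(y_{tt}^{\epsilon}\) from \eqref{2.10} into \(u_{tt}^{\epsilon}=-r(x,y_{tt}^{\epsilon})\), absorbs the resulting \(\norm{y_{xxt}^{\epsilon}}^{2}\) contribution into the left side, and bounds \(\norm{y_{0xt}^{\epsilon}}\) by differentiating the equation in \(x\) at \(t=0\). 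The only differences are cosmetic (you bound \(\alpha(y_{t}^{\epsilon},y_{xxt}^{\epsilon})\) directly by Young rather than integrating it by parts as the paper does), so the argument is in substance identical to the paper's.
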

	\begin{proof}
		Differentiating \eqref{p1} with respect to time yields
		\begin{align}
			\label{2.10}
			y_{tt}^{\epsilon}= \nu y_{xxt}^{\epsilon}+ \alpha y_{t}^{\epsilon}-3\delta (y^{\epsilon})^{2} y_{t}^{\epsilon}.
		\end{align}
		Taking the $L^{2}$-inner product of \eqref{2.10} with $-y_{xxt}^{\epsilon}$ gives
		\begin{align*}
			\frac{1}{2}\frac{d}{dt}\norm{y_{xt}^{\epsilon}}^{2}+ \nu \norm{y_{xxt}^{\epsilon}}^{2}+\frac{1}{2\epsilon}\frac{d}{dt}\Big((y_{t}^{\epsilon}(t, 1))^{2}\Big) &=\frac{\alpha}{\epsilon}(y_{t}^{\epsilon}(t, 1))^{2}- \frac{\alpha}{\epsilon}u_{t}^{\epsilon}(t)y_{t}^{\epsilon}(t, 1)+ \alpha\norm{y_{xt}^{\epsilon}}^{2}\\[1mm]&\ \ + \frac{1}{\epsilon}\frac{d}{dt}\Big(y_{t}^{\epsilon}(t, 1) u_{t}^{\epsilon}(t)\Big)-  \frac{1}{\epsilon}y_{t}^{\epsilon}(t, 1)u_{tt}^{\epsilon}(t)+ 3\delta (y_{t}^{\epsilon}(y^{\epsilon})^{2}, y_{xxt}^{\epsilon}).
		\end{align*}
		Using $ u^{\epsilon}(t) = -r\int_{0}^{1} x y^{\epsilon}(t, x)\,dx $ and substituting $ y_{tt}^{\epsilon} $ from \eqref{2.10}, we obtain
		\begin{align*}
			\frac{1}{2}\frac{d}{dt}\norm{y_{xt}^{\epsilon}}^{2}+ \nu \norm{y_{xxt}^{\epsilon}}^{2}+\frac{1}{2\epsilon}\frac{d}{dt}\Big((y_{t}^{\epsilon})^{2}(t, 1)\Big) &=\frac{\alpha}{\epsilon}(y_{t}^{\epsilon})^{2}(t, 1)- \frac{\alpha}{\epsilon}u_{t}^{\epsilon}(t)y_{t}^{\epsilon}(t, 1)+ \alpha\norm{y_{xt}^{\epsilon}}^{2}\\[1mm]& \quad +\frac{1}{\epsilon}\frac{d}{dt}\Big(y_{t}^{\epsilon}(t, 1) u_{t}^{\epsilon}(t)\Big)+3\delta (y_{t}^{\epsilon}(y^{\epsilon})^{2}, y_{xxt}^{\epsilon})+I,
		\end{align*}
		where $ I=-\frac{1}{\epsilon}y_{t}^{\epsilon}(t, 1)\left( r\nu \int_{0}^{1}xy_{xxt}^{\epsilon}dx + r\alpha \int_{0}^{1}xy_{t}^{\epsilon}dx -3r \alpha \int_{0}^{1}xy_{t}^{\epsilon} (y^{\epsilon})^{2}dx\right).$
		
		Applying  Young's inequality and  the H\"older's inequality, we get
		\begin{align*}
			\frac{1}{2}\frac{d}{dt}\norm{y_{xt}^{\epsilon}}^{2}+ \nu \norm{y_{xxt}^{\epsilon}}^{2}&+\frac{1}{2\epsilon}\frac{d}{dt}((y_{t}^{\epsilon})^{2}(t, 1))-\frac{1}{\epsilon}\frac{d}{dt}(y_{t}^{\epsilon}(t, 1) u_{t}^{\epsilon}(t))\\[1mm]&\leq (\frac{3\alpha}{2\epsilon} + \frac{r^{2} \nu }{3} +\frac{\alpha \epsilon}{2}+ 3r^{2})(y_{t}^{\epsilon}(t, 1))^{2}+\frac{\alpha r^{2}}{3\epsilon}\norm{y_{t}^{\epsilon}}^{2} + \alpha \norm{y_{tx}^{\epsilon}}^{2} \\[1mm]& \quad + \frac{\nu}{2}\norm{y_{xxt}^{\epsilon}}^{2}+\frac{9\delta^{2}}{2\nu}\norm{y_{t}^{\epsilon}}_\infty^{2} \norm{y^{\epsilon}}_{L^{4}}^{4}.
		\end{align*}
		Multiplying both sides by $2e^{2\gamma t}$ and integrating over $[0, t]$, and using Sobolev embedding, we obtain
		\begin{align}
			\label{2.11}
			\nonumber	e^{2\gamma t} \norm{y_{xt}^{\epsilon}}^{2}+\nu \int_{0}^{t}e^{2\gamma s}\norm{y_{xxt}^{\epsilon}(s)}^{2}ds &+\frac{1}{\epsilon}e^{2\gamma t}(y_{t}^{\epsilon})^{2}(t, 1)\nonumber\\[1mm]&\leq \norm{y_{xt}^{\epsilon}(0)}^{2}+ \frac{1}{\epsilon}\Big(e^{2\gamma t}y_{t}^{\epsilon}(t, 1) u_{t}^{\epsilon}(t)\Big)\nonumber\\[1mm]& \quad +C\int_{0}^{t}e^{2\gamma s}\Big((y_{t}^{\epsilon})^{2}(s, 1)+\norm{y_{t}^{\epsilon}(s)}^{2} + \norm{y_{t}^{\epsilon}(s)}_{1}^{2} \norm{y^{\epsilon}(s)}_{L^{4}}^{4}\Big)ds.
		\end{align}
		After differentiating \eqref{p1} with respect to \(x\) and using \((a+b+c)^{2}\leq 3(a^{2}+b^{2}+c^{2})\) with Sobolev embedding, we can write 
		\begin{align}
			\label{2.12}
			\norm{y_{xt}^{\epsilon}}^{2}\leq C(\norm{y_{xxx}^{\epsilon}}^{2}+ \norm{y^{\epsilon}}_{1}^{2}+ \norm{y^{\epsilon}}_{1}^{6}).
		\end{align}
		Applying Young's inequality in \eqref{2.11}, substituting $t = 0$ in \eqref{2.12}, and using Lemmas \ref{L2.2}–\ref{L2.4}, we finally arrive at
		\begin{align*}
			e^{2\gamma t} \norm{y_{xt}^{\epsilon}}^{2}+\nu \int_{0}^{t}e^{2\gamma s}\norm{y_{xxt}^{\epsilon}(s)}^{2}ds +\frac{1}{\epsilon}e^{2\gamma t}(y_{t}^{\epsilon})^{2}(t, 1)\leq C(\norm{y_{0}}_{3}^{2}+ \norm{y_{0}}_{1}^{4}).
		\end{align*}
		Multiplying both sides by $e^{-2\gamma t}$ completes the proof.
	\end{proof}
	\begin{lemma}
		\label{L2.26}
		Under assumption $(A1)$ and the regularity result \eqref{r}, the following estimate holds
		\begin{align*}
			\nu\norm{y_{xx}^{\epsilon}}\leq C(\norm{y_{0}}_{2})e^{-2\gamma t}.
		\end{align*}
	\end{lemma}
	\begin{proof}
		Taking the $L^2$-norm and applying the triangle inequality with Sobolev embedding, From \eqref{p1} we obtain 
		\begin{align*}
			\nu\norm{y_{xx}^{\epsilon}}^{2}\leq C(\norm{y_{t}^{\epsilon}}^{2}+ \norm{y^{\epsilon}}^{2}+\norm{y^{\epsilon}}_{1}^{6}).
		\end{align*}
		The result follows from Lemmas \ref{L2.1}-\ref{L2.3}, completing the proof.
	\end{proof}
	The proof of the following theorem follows from Lemmas \ref{L2.1} - \ref{L2.26}.
	\begin{theorem}
		\label{th2.21}
		Let  assumption $(A1)$ and the regularity result \eqref{r} hold. Then, there exists a positive constant $C$ such that
		\begin{align*}
			\norm{y^{\epsilon}}_{2}^{2}+\norm{y_{t}^{\epsilon}}_{1}^{2}+(y_{t}^{\epsilon}(t,1))^{2}&+\norm{y^{\epsilon}}_{L^{4}}^{4}+(y^{\epsilon}(t,1))^{2}+e^{-2\gamma t}\int_{0}^{t}e^{2\gamma s}\left(\norm{y^{\epsilon}_{t}(s)}_{2}^{2}+\norm{y^{\epsilon}_{t}(s)}^{2}+\norm{y^{\epsilon}(s)}_{L^{4}}^{4}\right)ds\\[1mm]&+e^{-2\gamma t}\int_{0}^{t}e^{2\gamma s}\Big((y^{\epsilon}(s,1))^{2}+(y_{t}^{\epsilon}(s,1))^{2}+(y^{\epsilon}(s,1))^{4} \Big)ds\leq e^{-2\gamma t}C(\norm{y_{0}}_{3}).
		\end{align*}
	\end{theorem}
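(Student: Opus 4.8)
The plan is to treat Theorem~\ref{th2.21} purely as a bookkeeping statement: every term on the left-hand side has already been bounded, with the \emph{same} decay rate $\gamma$ and the \emph{same} $e^{-2\gamma t}$ prefactor, in one of the Lemmas~\ref{L2.1}--\ref{L2.26}. First I would expand the composite norms into their scalar constituents, recalling that in one space dimension $\norm{v}_2^2=\norm{v}^2+\norm{v_x}^2+\norm{v_{xx}}^2$ and $\norm{v}_1^2=\norm{v}^2+\norm{v_x}^2$. Thus the left-hand side splits into the pointwise-in-time quantities $\norm{y^\epsilon}^2,\ \norm{y_x^\epsilon}^2,\ \norm{y_{xx}^\epsilon}^2,\ \norm{y_t^\epsilon}^2,\ \norm{y_{xt}^\epsilon}^2,\ \norm{y^\epsilon}_{L^4}^4,\ (y^\epsilon(t,1))^2$, together with the weighted time integrals of $\norm{y_t^\epsilon}^2,\ \norm{y_{xt}^\epsilon}^2,\ \norm{y_{xxt}^\epsilon}^2,\ \norm{y^\epsilon}_{L^4}^4,\ (y^\epsilon(s,1))^2,\ (y_t^\epsilon(s,1))^2$ and $(y^\epsilon(s,1))^4$.

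Next I would match each constituent to its source. Lemma~\ref{L2.1} supplies $\norm{y^\epsilon}^2$, the integral of $(y^\epsilon(s,1))^2$ and the integral of $\norm{y^\epsilon}_{L^4}^4$. Lemma~\ref{L2.2} supplies the pointwise $\norm{y_x^\epsilon}^2$, $(y^\epsilon(t,1))^2$ and $\norm{y^\epsilon}_{L^4}^4$, as well as the weighted integral of $\norm{y_t^\epsilon}^2$. Lemma~\ref{L2.3} gives the pointwise $\norm{y_t^\epsilon}^2$ and the integrals of $\norm{y_{xt}^\epsilon}^2$ and $(y_t^\epsilon(s,1))^2$. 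Lemma~\ref{L2.4} accounts for the integral of $\norm{y_{xx}^\epsilon}^2$ and the integral of $(y^\epsilon(s,1))^4$. Lemma~\ref{L2.5} provides the pointwise $\norm{y_{xt}^\epsilon}^2$ and $(y_t^\epsilon(t,1))^2$ together with the integral of $\norm{y_{xxt}^\epsilon}^2$. Finally Lemma~\ref{L2.26} bounds the pointwise $\norm{y_{xx}^\epsilon}^2$. Summing these inequalities reconstitutes the integral $\int_0^t e^{2\gamma s}\norm{y_t^\epsilon(s)}_2^2\,ds$ from its three constituent pieces and reproduces exactly the left-hand side of the theorem.

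The only point requiring care is the treatment of the data on the right. The individual lemmas produce right-hand sides of the several shapes $Ce^{-2\gamma t}\norm{y_0}_1^2$, $Ce^{-2\gamma t}(\norm{y_0}_2^2+\norm{y_0}_1^4)$ and $Ce^{-2\gamma t}(\norm{y_0}_3^2+\norm{y_0}^2+\norm{y_0}_{L^4}^4)$. Since $y_0\in H^3(0,1)$, each of these norms is finite, and by the Sobolev embedding $H^1(0,1)\hookrightarrow L^\infty(0,1)$ together with the chain $\norm{\cdot}\le\norm{\cdot}_1\le\norm{\cdot}_2\le\norm{\cdot}_3$, every such term is dominated by a constant depending only on $\norm{y_0}_3$. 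Collecting the finitely many resulting constants into a single $C(\norm{y_0}_3^2)$ and retaining the common factor $e^{-2\gamma t}$ yields the stated bound. I expect no genuine analytic obstacle, since all the hard energy estimates were carried out in the preceding lemmas; the main (and only) task is to verify that the decompositions above exhaust every term on the left, and in particular that the highest-order contribution $\int_0^t e^{2\gamma s}\norm{y_{xxt}^\epsilon}^2\,ds$ is fully absorbed by Lemma~\ref{L2.5}.
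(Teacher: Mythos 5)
Your proposal is correct and is essentially the paper's own proof: the paper disposes of Theorem \ref{th2.21} with the single line that it ``follows from Lemmas \ref{L2.1}--\ref{L2.26},'' and your term-by-term matching (expanding $\norm{\cdot}_2^2$ and $\norm{\cdot}_1^2$, sourcing each pointwise and weighted-integral piece from Lemmas \ref{L2.1}--\ref{L2.26}, and absorbing the various initial-data norms into $C(\norm{y_0}_3^2)$ via Sobolev embedding) is exactly the bookkeeping the paper leaves implicit. The only stray remark is attributing the integral of $\norm{y_{xx}^{\epsilon}}^2$ from Lemma \ref{L2.4} to the left-hand side — that term does not appear in the theorem and only the $(y^{\epsilon}(s,1))^4$ integral is needed from that lemma — but this is harmless and does not affect the argument.
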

	
	\section{Continuous Dependence Property and Convergence Analysis. }
	In this section, we establish the continuous dependence property of the penalized feedback control problem \eqref{p1}-\eqref{p2}, which holds under assumption $(A1)$ and the regularity result \eqref{r}.
	
	Let $ y_{1}^{\epsilon} $ and $ y_{2}^{\epsilon} $ be two solutions of the problem \eqref{p1} - \eqref{p2}. Denote $ E= y_{1}^{\epsilon}- y_{2}^{\epsilon}. $ Then, subtracting the equations satisfied by $y_{2}^{\epsilon}$ from those of $y_{1}^{\epsilon}$, we obtain
	\begin{align}
		\label{2.118}
		E_{t}&=\nu E_{xx}+ \alpha E -\delta ((y_{1}^{\epsilon})^{3}- (y_{2}^{\epsilon})^{3}),\\
		\nonumber E_{x}(t,1)&=-\frac{1}{\epsilon}E(t,1)-\frac{r}{\epsilon}\int_{0}^{1}xE(t,x)dx,\quad E(t,0)=0,
		\\\nonumber E(0, x)&=0.	
	\end{align}
	Taking the \(L^{2}\)-inner product between \(E\) and \(\eqref{2.118}\), we get
	\begin{align*}
		\frac{1}{2}\frac{d}{dt}\norm{E}^{2} + \nu \norm{E_{x}}^{2} =\nu E_{x}(t,1)E(t,1) + \alpha \norm{E}^{2}-\frac{\delta}{2} \int_{0}^{1}E^{2}\Big((y_{1}^{\epsilon})^{2}+(y_{2}^{\epsilon})^{2}+(y_{1}^{\epsilon}+y_{2}^{\epsilon} )^{2}\Big)dx.
	\end{align*}
   Since the last term on the right-hand side is non-positive, following the proof of Lemma \ref{L2.1}, we get 
	
	\begin{align}
		\label{2.13}
		\norm{E}^{2}+ \beta_{1} e^{-2\gamma t}\int_{0}^{t}e^{2\gamma s}\norm{E_{x}(s)}^{2}ds +\frac{\nu}{\epsilon}e^{-2\gamma t}\int_{0}^{t}e^{2\gamma s}(E(s, 1))^{2}ds \leq e^{-2\gamma t}\norm{E_{0}}^{2},
	\end{align}
    where $ 0\leq\beta_{1}=2\nu-\gamma-\alpha-\frac{2\nu r^{2}}{3\epsilon}. $
	Since \(E(0) = 0\), it follows that \(\norm{E(t)} = 0\) for all \(t\geq 0\). Hence, we conclude that \(y_{1}^{\epsilon} = y_{2}^{\epsilon}\), and inequality \eqref{2.13} ensures the continuous dependence of the solution on the initial data.

	\subsection{Convergence Analysis.}
	\label{sbs4.1}
	In this subsection, we analyze the convergence analysis of the solution $y^{\epsilon}$ of the penalized problem \eqref{p1}-\eqref{p2} to the solution $y$ of the corresponding closed loop system \eqref{ch1}-\eqref{c}, as the penalty parameter $\epsilon \rightarrow 0$. We define the difference $z:= y^{\epsilon} - y$.
	
	By subtracting \eqref{p1}-\eqref{p2} from \eqref{ch1}-\eqref{c}, we obtain
	\begin{align}
		\label{2.14}
		z_{t}= \nu z_{xx}+\alpha z - \delta ((y^{\epsilon})^{3}- y^{3}),\\
		\epsilon\left(\frac{\partial z}{\partial x} (1)+\frac{\partial y}{\partial x}(1) \right)+ z(1)= (u^{\epsilon}(t)-u(t)),\ 
		z(t,0)=0,\\
		z(0,x)=0.
		\label{2.17}
	\end{align}
	The weak formulation associated with \eqref{2.14}-\eqref{2.17} seeks $z \in H_{\{0\}}^{1}$ such that
	\begin{align}\label{2.21}
		(z_{t}, \chi) + \nu (z_{x}, \chi_{x})+ \frac{\nu}{\epsilon} z(t, 1)\chi(1)+ \delta ((y^{\epsilon})^{3}- y^{3}, \chi)&= \frac{\nu}{\epsilon}( u^{\epsilon}(t) -u(t))\chi(1)- \nu \frac{\partial y}{\partial x}(t, 1) \chi(1)\nonumber\\[1mm]& \ \ + \alpha(z, \chi), \  \forall \chi\in H_{\{0\}}^{1}.
	\end{align}
	The following theorem guarantees the convergence analysis in the \(L^{2}\)-norm for the state variable.
	\begin{theorem}
		\label{th2.1}
		Let $z$ be the solution of \eqref{2.14}-\eqref{2.17}. Under assumption \((A1)\) and the regularity result \eqref{r}, there exists a positive constant $C=C(\norm{y_{0}}_{1})$ independent of $\epsilon$ such that
		\begin{align*}
			\norm{z}^{2}+ e^{-2\gamma t}\beta_{1} \int_{0}^{t}e^{2\gamma s}\norm{z_{x}(s)}^{2}ds + \frac{3\nu e^{-2\gamma t}}{2\epsilon}\int_{0}^{t}e^{2\gamma s}(z(s,1))^{2}ds \leq Ce^{-2\gamma t} \epsilon,
		\end{align*}
		where $ 0<\beta_{1}=\nu-\gamma-\alpha. $
	\end{theorem}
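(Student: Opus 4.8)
The plan is to test the weak formulation \eqref{2.21} with $\chi = z$ and to run the same energy argument that produced the continuous dependence estimate \eqref{2.13}; the one genuinely new ingredient is the normal-derivative term $-\nu\frac{\partial y}{\partial x}(t,1)\,z(t,1)$, and it is precisely this term that produces the $\mathcal{O}(\epsilon)$ rate. Choosing $\chi=z$ gives
\begin{align*}
\frac{1}{2}\frac{d}{dt}\norm{z}^{2}+\nu\norm{z_{x}}^{2}+\frac{\nu}{\epsilon}(z(t,1))^{2}+\delta((y^{\epsilon})^{3}-y^{3},z)=\frac{\nu}{\epsilon}(u^{\epsilon}(t)-u(t))z(t,1)-\nu\frac{\partial y}{\partial x}(t,1)z(t,1)+\alpha\norm{z}^{2}.
\end{align*}
First I would factor $(y^{\epsilon})^{3}-y^{3}=z\big((y^{\epsilon})^{2}+y^{\epsilon}y+y^{2}\big)$ and note that $(y^{\epsilon})^{2}+y^{\epsilon}y+y^{2}=\tfrac{1}{2}\big((y^{\epsilon})^{2}+y^{2}+(y^{\epsilon}+y)^{2}\big)\ge 0$, so the cubic term is non-negative and can be discarded from the left. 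Using $u^{\epsilon}-u=-r(x,z)$ and the Young splitting already employed in Lemma \ref{L2.1}, the control-difference term is bounded by $\frac{2\nu r^{2}}{3\epsilon}\norm{z}^{2}+\frac{\nu}{8\epsilon}(z(t,1))^{2}$.

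The decisive step is the treatment of $-\nu\frac{\partial y}{\partial x}(t,1)\,z(t,1)$: I would apply Young's inequality with an $\epsilon$-weighted parameter,
\begin{align*}
-\nu\frac{\partial y}{\partial x}(t,1)z(t,1)\le 2\nu\epsilon\left(\frac{\partial y}{\partial x}(t,1)\right)^{2}+\frac{\nu}{8\epsilon}(z(t,1))^{2},
\end{align*}
so that the $(z(t,1))^{2}$ part is absorbed into the boundary term on the left while the remainder is $\mathcal{O}(\epsilon)$. After absorbing both Young contributions there remains $\frac{3\nu}{4\epsilon}(z(t,1))^{2}$ on the left, which produces the coefficient $\frac{3\nu}{2\epsilon}$ in the statement after the scaling below.

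I would then multiply by $2e^{2\gamma t}$, apply the Poincar\'e inequality $\norm{z}^{2}\le\tfrac{1}{2}\norm{z_{x}}^{2}$ to the $\big(\alpha+\tfrac{2\nu r^{2}}{3\epsilon}\big)\norm{z}^{2}$ term, and invoke assumption \eqref{2.1} to identify $\beta_{1}=2\nu-\gamma-\alpha-\tfrac{2\nu r^{2}}{3\epsilon}$ as the coefficient of $e^{2\gamma t}\norm{z_{x}}^{2}$. This yields
\begin{align*}
\frac{d}{dt}\left(e^{2\gamma t}\norm{z}^{2}\right)+\beta_{1}e^{2\gamma t}\norm{z_{x}}^{2}+\frac{3\nu}{2\epsilon}e^{2\gamma t}(z(t,1))^{2}\le 4\nu\epsilon\,e^{2\gamma t}\left(\frac{\partial y}{\partial x}(t,1)\right)^{2}.
\end{align*}
Integrating from $0$ to $t$, using $z(0)=0$, and multiplying by $e^{-2\gamma t}$ gives the claimed estimate, once the right-hand side is shown to be $\mathcal{O}(\epsilon)$ uniformly in $\epsilon$.

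The main obstacle is exactly this last point: I must bound $\int_{0}^{t}e^{2\gamma s}\big(\frac{\partial y}{\partial x}(s,1)\big)^{2}ds$ by a constant independent of $\epsilon$. I would use the trace inequality $\big|\frac{\partial y}{\partial x}(1)\big|^{2}\le C(\norm{y_{x}}^{2}+\norm{y_{xx}}^{2})$ for $H^{1}$ functions, then control $\int_{0}^{t}e^{2\gamma s}\norm{y_{x}(s)}^{2}ds$ through Lemma \ref{L1.1} and $\int_{0}^{t}e^{2\gamma s}\norm{y_{xx}(s)}^{2}ds$ through Remark \ref{r1.2}. Since $y$ solves the $\epsilon$-independent Dirichlet problem \eqref{ch1}-\eqref{ch2}, these bounds — and hence the constant $C$ — do not depend on $\epsilon$, and this $\epsilon$-uniformity is precisely what upgrades the continuous-dependence inequality \eqref{2.13} into the genuine convergence rate $\mathcal{O}(\epsilon)$.
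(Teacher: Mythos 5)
Your proposal is correct and follows essentially the same route as the paper: testing \eqref{2.21} with $\chi=z$, discarding the monotone cubic term, splitting the control-difference and normal-derivative boundary terms by Young's inequality with the $\epsilon$-weighted parameter to retain $\frac{3\nu}{4\epsilon}(z(t,1))^{2}$ on the left, and then bounding $\int_{0}^{t}e^{2\gamma s}\big(\frac{\partial y}{\partial x}(s,1)\big)^{2}ds$ uniformly in $\epsilon$ via the regularity of the Dirichlet problem (Lemmas \ref{L1.1}--\ref{L1.2} and Remark \ref{r1.2}). The only cosmetic difference is that you invoke the 1D trace inequality $\big|\frac{\partial y}{\partial x}(t,1)\big|^{2}\leq C\big(\norm{y_{x}}^{2}+\norm{y_{xx}}^{2}\big)$ directly, whereas the paper derives the same bound explicitly from the identity $\frac{\partial y}{\partial x}(t,1)=\int_{0}^{1}y_{x}\,dx+\int_{0}^{1}x\,y_{xx}\,dx$.
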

	\begin{proof}
		Set \(\chi = z\) in the weak formulation \eqref{2.21} to obtain
		\begin{align*}
			\frac{1}{2}\frac{d}{dt}\norm{z}^{2} + \nu \norm{z_{x}}^{2} + \frac{\nu}{\epsilon}(z(t,1))^{2} \leq \alpha\norm{z}^{2}+ \frac{\nu}{\epsilon}\left(u^{\epsilon}(t)-u(t)\right) z(t,1)- \nu \frac{\partial y}{\partial x}(t,1)z(t,1).
		\end{align*}
		Since $ u^{\epsilon}(t) -u(t) = -r\int_{0}^{1}xz(t, x)dx, $ therefore, by the H\"older's inequality, we obtain
		\begin{align}
			\label{2.23}
			(u^{\epsilon}(t) -u(t))^{2}\leq \frac{r^{2}}{3}\norm{z}^{2}.
		\end{align}
		Using Young's inequality on the right hand side of the above inequality, we arrive at
		\begin{align*}
			\frac{1}{2}\frac{d}{dt}\norm{z}^{2} + \nu \norm{z_{x}}^{2} + \frac{3\nu}{4\epsilon}(z(t,1))^{2} \leq (\alpha+\frac{2\nu r^{2}}{3\epsilon})\norm{z}^{2} + 2\epsilon \nu (\frac{\partial y}{\partial x}(t,1))^{2}.
		\end{align*}
		Multiplying by \(2e^{2\gamma t}\) to the above inequality yields
		\begin{align*}
			\frac{d}{dt}(e^{2\gamma t}\norm{z}^{2})+ 2\nu e^{2\gamma t}\norm{z_{x}}^{2} + \frac{3\nu e^{2\gamma t}}{2\epsilon}(z(t,1))^{2} \leq 2e^{2\gamma t}(\gamma+\alpha+\frac{2\nu r^{2}}{3\epsilon})\norm{z}^{2} + 4\epsilon \nu e^{2\gamma t} (\frac{\partial y}{\partial x}(t,1))^{2}.
		\end{align*}
		Using the Poincar\'e inequality with assumption in Lemma \(\ref{L2.1}\), and integrating with respect to time from  \( 0 \) to \(t \), we observe that
		\begin{align}
			\label{2.18}
			e^{2\gamma t}\norm{z}^{2}+ \beta_{1} \int_{0}^{t}e^{2\gamma s}\norm{z_{x}(s)}^{2}ds + \frac{3\nu}{2\epsilon}\int_{0}^{t}e^{2\gamma s}(z(s,1))^{2}ds \leq 2\epsilon \nu \norm{e^{\gamma t}\frac{\partial y}{\partial x}(t,1)}_{L^{2}(0,t)}^{2},
		\end{align}
		where $ 0<\beta_{1}=\nu-\gamma -\alpha. $
		
		Now, we establish the bound of $ \frac{\partial y}{\partial x}(t,1).$
		Using integration by parts, we can write
		\begin{align*}
			\frac{\partial y}{\partial x}(t,1)=\int_{0}^{1}\frac{\partial y}{\partial x}(t,x)dx + \int_{0}^{1}x\frac{\partial^{2} y}{\partial x^{2}}(t,x)dx.
		\end{align*}
		A use of the H\"older's inequality with $ (a+b)^{2}\leq 2 (a^{2}+ b^{2}) $ to the above equation, we obtain
		\begin{align*}
			(\frac{\partial y}{\partial x}(t,1))^{2}\leq 2\int_{0}^{1}(\frac{\partial y}{\partial x})^{2}dx +\frac{2}{3}\int_{0}^{1}(\frac{\partial^{2} y}{\partial x^{2}}(t,x))^{2}dx.
		\end{align*}
		Multiplying by \(e^{2\gamma t}\) and integrating with respect to time from $ 0 $ to $ t $ to the above inequality, it follows that
		\begin{align}
			\label{2.19}
			\int_{0}^{t}e^{2\gamma s}(\frac{\partial y}{\partial x}(s,1))^{2}ds\leq C\int_{0}^{t}e^{2\gamma s}\norm{y(s)}_{2}^{2}ds.
		\end{align}
		Using \eqref{2.18} and \eqref{2.19},  we obtain 
		\begin{align}
			\label{2.20}
			e^{2\gamma t}\norm{z}^{2}+ \beta_{1} \int_{0}^{t}e^{2\gamma s}\norm{z_{x}(s)}^{2}ds + \frac{3\nu}{2\epsilon}\int_{0}^{t}e^{2\gamma s}(z(s,1))^{2}ds \leq C \epsilon\int_{0}^{t}e^{2\gamma s}\norm{y(s)}_{2}^{2}ds,
		\end{align}
		where $ C=C(\nu)>0.$
		This proof is completed with Remark \ref{r1.2} and Lemma \ref{L1.1}, post-multiplying by \(e^{-2\gamma t}\) to the above equation.
	\end{proof}
	In the following lemma, we establish the convergence analysis of the control input and the state variable in the \(H^{1}\)-norm with respect to the penalty parameter.
	\begin{lemma}
		\label{L2.6}
		Suppose that the hypothesis of  Theorem \ref{th2.1}, assumption $(A1)$, and the regularity result \eqref{r}  hold, then
		\[
		\nu (3\epsilon-r^{2}) \norm{z_{x}}^{2}+ \frac{3}{2}\nu z^{2}(t, 1)+3\epsilon e^{-2\gamma t} \int_{0}^{t}e^{2\gamma s}\norm{z_{t}}^{2}ds\leq C(\norm{y_{0}}_{3})e^{-2\gamma t}(\epsilon + \epsilon^{2}).
		\]
	\end{lemma}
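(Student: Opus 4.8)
The plan is to test the weak formulation \eqref{2.21} for the error $z = y^{\epsilon}-y$ with $\chi = z_{t}$, exactly mirroring the energy argument used for the penalized problem itself in Lemma \ref{L2.2}. Writing $\nu(z_{x},z_{xt}) = \tfrac{\nu}{2}\tfrac{d}{dt}\norm{z_{x}}^{2}$, $\tfrac{\nu}{\epsilon}z(t,1)z_{t}(t,1) = \tfrac{\nu}{2\epsilon}\tfrac{d}{dt}(z(t,1))^{2}$ and $\alpha(z,z_{t})=\tfrac{\alpha}{2}\tfrac{d}{dt}\norm{z}^{2}$, this produces the energy identity
\begin{align*}
\norm{z_{t}}^{2} + \frac{\nu}{2}\frac{d}{dt}\norm{z_{x}}^{2} + \frac{\nu}{2\epsilon}\frac{d}{dt}(z(t,1))^{2} = -\delta\big((y^{\epsilon})^{3}-y^{3}, z_{t}\big) + \frac{\nu}{\epsilon}(u^{\epsilon}-u)z_{t}(t,1) - \nu\frac{\partial y}{\partial x}(t,1)z_{t}(t,1) + \frac{\alpha}{2}\frac{d}{dt}\norm{z}^{2},
\end{align*}
in which the coercive boundary quantity $\tfrac{\nu}{2\epsilon}(z(t,1))^{2}$ appearing on the left is precisely the one demanded by the statement.

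First I would dispose of the cubic term. Factoring $(y^{\epsilon})^{3}-y^{3} = z\big((y^{\epsilon})^{2}+y^{\epsilon}y+y^{2}\big)$ and using the uniform $L^{\infty}$ bounds on $y^{\epsilon}$ (Theorem \ref{th2.21}) and on $y$ (Lemmas \ref{L1.1}--\ref{L1.4} with the embedding $H^{1}\hookrightarrow L^{\infty}$), one obtains $\norm{(y^{\epsilon})^{3}-y^{3}} \le C\norm{z}$, so Young's inequality bounds $\delta|((y^{\epsilon})^{3}-y^{3}, z_{t})|$ by $\tfrac{1}{6}\norm{z_{t}}^{2}+C\norm{z}^{2}$; the $\norm{z_{t}}^{2}$ part is absorbed on the left, and after inserting the weight $e^{2\gamma s}$ the $\norm{z}^{2}$ part becomes integrable through the Poincar\'e inequality together with Theorem \ref{th2.1}, which give $\int_{0}^{t}e^{2\gamma s}\norm{z}^{2}\,ds \le C\int_{0}^{t}e^{2\gamma s}\norm{z_{x}}^{2}\,ds \le C\epsilon$.

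The heart of the proof is the boundary forcing $\tfrac{\nu}{\epsilon}(u^{\epsilon}-u)z_{t}(t,1) - \nu\tfrac{\partial y}{\partial x}(t,1)z_{t}(t,1)$, which cannot be controlled pointwise since the trace $z_{t}(t,1)$ is not dominated by $\norm{z_{t}}$. My plan is to integrate by parts in time, replacing each $z_{t}(t,1)$ by $\tfrac{d}{dt}z(t,1)$: this turns the forcing into a total time derivative (to be moved onto the left and combined with the coercive $\tfrac{\nu}{2\epsilon}(z(t,1))^{2}$) plus lower-order integrals involving $u^{\epsilon}_{t}-u_{t} = -r(x,z_{t})$ and $y_{xt}(t,1)$. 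One then multiplies by $2e^{2\gamma t}$, integrates over $(0,t)$, and uses $z(0)=0$. The residual integrals are estimated through the three a priori bounds furnished by the convergence theorem, namely $\int_{0}^{t}e^{2\gamma s}\norm{z_{x}}^{2}\,ds \le C\epsilon$, $\tfrac{\nu}{\epsilon}\int_{0}^{t}e^{2\gamma s}(z(s,1))^{2}\,ds \le C\epsilon$, and $\norm{z(t)}^{2}\le C\epsilon e^{-2\gamma t}$, together with the regularity of $y$ (Remark \ref{r1.2} and Lemmas \ref{L1.1}--\ref{L1.4}) needed to bound the traces $\tfrac{\partial y}{\partial x}(t,1)$ and $y_{xt}(t,1)$.

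The main obstacle is exactly this boundary bookkeeping: every boundary contribution carries a $\tfrac{1}{\epsilon}$ penalty weight, so to reach the sharp $\mathcal{O}(\epsilon+\epsilon^{2})$ rate rather than an $\mathcal{O}(1)$ residue one must pair each factor of $\tfrac{1}{\epsilon}$ with the correctly matched $\epsilon$- or $\epsilon^{2}$-order estimate from Theorem \ref{th2.1}, choosing the Young's-inequality weights so that all traces of $z$ and $z_{t}$ are absorbed into the coercive terms $\tfrac{\nu}{2\epsilon}(z(t,1))^{2}$ and $\tfrac{1}{6}\norm{z_{t}}^{2}$ and no uncancelled $\tfrac{1}{\epsilon}\norm{z}^{2}$ survives. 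Once the forcing is tamed, a final integration in time and multiplication by $e^{-2\gamma t}$ yield the claimed estimate.
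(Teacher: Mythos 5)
Your proposal follows essentially the same route as the paper's own proof: testing \eqref{2.21} with $\chi=z_t$, rewriting the two boundary forcings $\frac{\nu}{\epsilon}(u^{\epsilon}-u)z_t(t,1)$ and $-\nu\frac{\partial y}{\partial x}(t,1)z_t(t,1)$ as total time derivatives minus lower-order terms involving $(u^{\epsilon}-u)_t=-r(x,z_t)$ and $y_{xt}(t,1)$, bounding the cubic term via the $L^{\infty}$ bounds on $y$ and $y^{\epsilon}$, and closing with $\epsilon$-weighted Young inequalities together with Theorem \ref{th2.1}, Lemmas \ref{L1.3}--\ref{L1.4}, and Remarks \ref{r1.2}--\ref{r1.3}. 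The only caveat, implicit in your "bookkeeping" paragraph, is that killing the residual $\frac{r^{2}}{\epsilon}\norm{z}^{2}$-type terms down to $\mathcal{O}(\epsilon)$ also uses the standing assumption (A1) that $r^{2}<3\epsilon$, exactly as in the paper.
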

	\begin{proof}
		Set $ \chi = z_{t} $ in \eqref{2.21} to get
		\begin{align}
			\label{2.22}
			\frac{\nu}{2}\frac{d}{dt}(\norm{z_{x}}^{2})+ \norm{z_{t}}^{2}&+ \frac{\nu}{2\epsilon}\frac{d}{dt}(z^{2}(t,1))\nonumber\\[1mm]&= - \delta\Big((y^{\epsilon})^{3}- y^{3}, z_{t}\Big)+ \frac{\nu}{\epsilon}\frac{d}{dt}\Big(\left( u^{\epsilon}(t) -u(t)\right)z(t, 1)\Big)-\frac{\nu}{\epsilon}( u^{\epsilon}(t) -u(t))_{t}z(t, 1)\nonumber\\[1mm]&\ \ -\nu\frac{d}{dt}\Big(\frac{\partial y}{\partial x}(t, 1) z(t, 1)\Big)+\nu \frac{d}{dt}\Big(\frac{\partial y}{\partial x}(t, 1)\Big) z(t, 1)+ \alpha(z, z_{t}).
		\end{align}
		Using the Sobolev embedding, Lemma \ref{L1.1}  and Lemmas \ref{L2.1} -  \ref{L2.2} with $\gamma =0$, the first term  on the right hand side of \eqref{2.22} is bounded by 
		\begin{align*}
			\delta((y^{\epsilon})^{3}- y^{3}, z_{t})\leq C\delta \norm{z} (\norm{y^{\epsilon}}_{\infty}^{2}+\norm{y}_{\infty}^{2})\norm{z_{t}} \leq C\norm{z}^{2}\norm{y}_{1}^{2}+C\norm{z}^{2}+ \frac{1}{6}\norm{z_{t}}^{2}.
		\end{align*}
		A use of the Young's inequality with \eqref{2.23}, the third term on the right hand side of \eqref{2.22} can be written as 
		\begin{align*}
			\frac{\nu}{\epsilon}( u^{\epsilon}(t) -u(t))_{t}z(t, 1)\leq \frac{1}{6}\norm{z_{t}}^{2}+ \frac{\nu^{2}r^{2}}{2\epsilon^{2}}z^{2}(t, 1).
		\end{align*}
		Substituting these values into \eqref{2.22} with the use of Young's inequality, we arrive at
		\begin{align*}
			\frac{\nu}{2}\frac{d}{dt}(\norm{z_{x}}^{2}) +\frac{1}{2} \norm{z_{t}}^{2}+ \frac{\nu}{2\epsilon}\frac{d}{dt}(z^{2}(t,1))&\leq C\norm{z}^{2}\norm{y}_{1}^{2} +  \frac{\nu}{\epsilon}\frac{d}{dt}\Big(( u^{\epsilon}(t) -u(t))z(t, 1)\Big)-\nu\frac{d}{dt}\Big(\frac{\partial y}{\partial x}(t, 1) z(t, 1)\Big)\\[1mm]& \ \ +(\frac{\nu^{2}r^{2}}{2\epsilon^{2}} + \frac{\nu^{2}}{\epsilon^{2}})z^{2}(t, 1)+\frac{\epsilon^{2}}{4}(\frac{\partial^{2}y}{\partial x \partial t}(t, 1))^{2}+C\norm{z}^{2}.
		\end{align*}
		Multiplying by \(2e^{2\gamma t}\) and integrating with respect to time from $ 0 $ to $ t $, and using Young's inequality in the resulting inequality with Lemma \ref{L1.1} yields
		\begin{align*}
			\nu e^{2\gamma t}\Big(\norm{z_{x}}^{2}+ \frac{1}{2\epsilon}z^{2}(t,1)\Big)+ \int_{0}^{t}e^{2\gamma s}\norm{z_{t}(s)}^{2}ds &\leq 
			C e^{2\gamma t}\Big(\epsilon(\frac{\partial y}{\partial x}(t, 1))^{2}+\norm{z}^{2}\Big)\\[1mm]&\quad+ C\int_{0}^{t}e^{2\gamma s}\Big(\norm{z(s)}^{2}+ z^{2}(s, 1)+(\frac{\partial y}{\partial x}(s, 1))^{2}\Big)ds\\[1mm]&\quad + \epsilon^{2}\int_{0}^{t}e^{2\gamma s}(\frac{\partial^{2}y}{\partial x \partial t}(s, 1))^{2} ds+ 2\gamma\int_{0}^{t}e^{2\gamma s}\norm{z_{x}(s)}^{2}ds.
		\end{align*}
		Since 
		\[
		(\frac{\partial^{2}y}{\partial x \partial t}(t, 1))^{2}\leq 2\norm{y_{xt}}^{2}+ \frac{2}{3}\norm{y_{xxt}}^{2},
		\]
		therefore, applying Theorems \ref{th2.1} and \ref{th2.2}, and Remark \ref{r1.2} - \ref{r1.3} completes the rest of the proof.
	\end{proof}
	\begin{remark}
		\label{r2.3}
		From  Lemma \ref{L2.6} with \(\gamma=0\), we arrive at
		\[ (z(t,1))^{2} \leq C (\epsilon+\epsilon^{2}).\]
		Therefore, it follows that
		\[y^{\epsilon}(t, 1)\rightarrow y(t, 1), \ \text{when} \quad \epsilon\rightarrow 0.\]
		Hence, we can write
		\[u^{\epsilon}(t) \rightarrow u(t), \  \text{as} \quad \epsilon\rightarrow 0. \]
	\end{remark}
	
	\section{Finite Element Approximation.}
	In this section, we present the semi-discrete approximation of the problem using the finite element method, keeping the time variable continuous.
	
	Let $N_{h}$ be a positive integer, and consider a partition $P = \{ 0 = x_{0} < x_{1} < \ldots < x_{N_{h}} = 1 \}$ of the interval $(0, 1)$ into subintervals $I_{i} = (x_{i-1}, x_{i})$ for $1 \leq i \leq N_{h}$, where $h_{i} = x_{i} - x_{i-1}$. Let the mesh parameter be defined by $h = \max_{1 \leq i \leq N_{h}} h_{i}$.
	We now define a finite-dimensional subspace $S_{h} \subset H_{\{0\}}^{1}$ as follows:
	\[S_{h}=\{ \phi_{h}\in C^{0}((0, 1)): \phi_{h}|_{I_{i}} \in P_{1}(I_{i}) \ \ 1\leq i\leq N_{h},\;\phi_h(0)=0\}, \]
	where $P_{1}(I_{i})$ denotes the space of polynomials of degree at most one on each subinterval $(I_{i})_{i=1}^{N_{h}}$.
	
	The semi-discrete finite element scheme corresponding to problem \eqref{1.12} is to find $y_{h}^{\epsilon}(t,x) \in S_{h}$ such that
	\begin{align}
		\label{3.1}
		(y_{ht}^{\epsilon}, \chi)+\nu (y_{hx}^{\epsilon}, \chi_{x}) + \frac{\nu}{\epsilon}y_{h}^{\epsilon}(t,1) \chi(1)+ \delta((y_{h}^{\epsilon})^{3}, \chi)= \frac{\nu}{\epsilon}u_{h}^{\epsilon}(t)\chi(1)+\alpha(y_{h}^{\epsilon}, \chi)  \medspace \ \text{for all} \ \chi\in S_{h},
	\end{align}
	with the initial condition $y_{h}^{\epsilon}(0, x) = y_{0h}$, where $y_{0h}$ is an approximation of $y_{0}$ given by $y_{0h} = P_{h} y_{0}$. Here, $P_{h}$ denotes the projection operator from $H_{\{0\}}^{1}$ onto $S_{h}$.
	
	Given that $S_{h}$ is a finite-dimensional subspace of $H_{\{0\}}^{1}$, the semi-discrete problem \eqref{3.1} defines a system of nonlinear ordinary differential equation. By Picard’s theorem, the existence and uniqueness of a solution hold locally, i.e., there exists an interval $(0, t^{*})$ such that $y_{h}^{\epsilon}(t,x)$ exists for all $t \in (0, t^{*})$. To extend this to global existence for all $t > 0$, we apply the following result.
	
	\begin{lemma}
		\label{L3.1}
		Let assumption $(A1)$ and the regularity result \eqref{r} hold. Then, the following estimate holds
		\begin{align*}
			\norm{y_{h}^{\epsilon}}^{2}+ \beta e^{-2\gamma t}\int_{0}^{t}e^{2\gamma s}\Big(\norm{y_{hx}^{\epsilon}(s)}^{2} + (y_{h}^{\epsilon}(s,1))^{2}\Big)ds+ 2\delta e^{-2\gamma t}\int_{0}^{t} e^{2\gamma s}\norm{y_{h}^{\epsilon}(s)}_{L^{4}}^{4}ds\leq e^{-2\gamma t}\norm{y_{0h}}^{2},
		\end{align*}
		where $ \beta>0. $
	\end{lemma}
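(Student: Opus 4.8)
The plan is to reproduce the argument of Lemma~\ref{L2.1} verbatim at the discrete level, exploiting the fact that the scheme is conforming, i.e.\ $S_{h}\subset H_{\{0\}}^{1}$. The crucial observation is that $y_{h}^{\epsilon}(t)\in S_{h}$ is itself an admissible test function in \eqref{3.1}, and that every inequality used in the continuous proof (Young, Cauchy--Schwarz, H\"older, Poincar\'e) depends only on membership in $H_{\{0\}}^{1}$ and therefore transfers unchanged to $y_{h}^{\epsilon}$.

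First I would set $\chi=y_{h}^{\epsilon}$ in \eqref{3.1} to obtain the discrete energy identity
\begin{align*}
\frac{1}{2}\frac{d}{dt}\norm{y_{h}^{\epsilon}}^{2}+\nu\norm{y_{hx}^{\epsilon}}^{2}+\frac{\nu}{\epsilon}(y_{h}^{\epsilon}(t,1))^{2}+\delta\norm{y_{h}^{\epsilon}}_{L^{4}}^{4}=\frac{\nu}{\epsilon}u_{h}^{\epsilon}(t)y_{h}^{\epsilon}(t,1)+\alpha\norm{y_{h}^{\epsilon}}^{2},
\end{align*}
which is the exact analogue of \eqref{2.8}. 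Using $u_{h}^{\epsilon}(t)=-r\int_{0}^{1}xy_{h}^{\epsilon}(t,x)\,dx$ together with the Cauchy--Schwarz bound $\bigl(\int_{0}^{1}xy_{h}^{\epsilon}\,dx\bigr)^{2}\le\frac{1}{3}\norm{y_{h}^{\epsilon}}^{2}$ and Young's inequality, I would estimate the control term exactly as in \eqref{2.9}, yielding $\frac{\nu}{\epsilon}u_{h}^{\epsilon}(t)y_{h}^{\epsilon}(t,1)\le\frac{2\nu r^{2}}{3\epsilon}\norm{y_{h}^{\epsilon}}^{2}+\frac{\nu}{8\epsilon}(y_{h}^{\epsilon}(t,1))^{2}$.

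Next I would multiply through by $2e^{2\gamma t}$ and invoke the Poincar\'e inequality $\norm{y_{h}^{\epsilon}}^{2}\le\frac{1}{2}\norm{y_{hx}^{\epsilon}}^{2}$, which holds for $y_{h}^{\epsilon}$ precisely because every element of $S_{h}$ vanishes at $x=0$. Under assumption~(A1) and the decay condition \eqref{2.1}, the coefficient $2\nu-\gamma-\frac{2\nu r^{2}}{3\epsilon}-\alpha$ is positive, so collecting terms gives the discrete counterpart of \eqref{2.110}. Integrating from $0$ to $t$ and multiplying by $e^{-2\gamma t}$ then produces the asserted estimate with the same $\beta=\min\{\,2\nu-\gamma-\frac{2\nu r^{2}}{3\epsilon}-\alpha,\,\frac{\nu}{\epsilon}\,\}>0$.

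There is no genuine obstacle here; the only points requiring care are that the Poincar\'e inequality and the pointwise boundary evaluation $y_{h}^{\epsilon}(t,1)$ be justified at the discrete level, and both follow immediately from $S_{h}\subset H_{\{0\}}^{1}$ and the $C^{0}$-conformity of the elements. I would stress that the actual purpose of this lemma is to supply an a priori bound that is uniform in $t$: this is exactly what upgrades the local-in-time solution provided by Picard's theorem on $(0,t^{*})$ to a global solution valid for all $t>0$.
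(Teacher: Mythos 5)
Your proposal is correct and coincides with the paper's approach: the paper's proof of this lemma is the one-line remark that it ``follows directly from Lemma~\ref{L2.1},'' and your argument is exactly that repetition of the Lemma~\ref{L2.1} energy estimate with $\chi=y_{h}^{\epsilon}\in S_{h}$ as the test function, justified by the conformity $S_{h}\subset H_{\{0\}}^{1}$. Your closing observation that this uniform-in-time bound is what extends the Picard local solution to a global one also matches the role the paper assigns to the lemma.
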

	\begin{proof}
		The proof follows directly from Lemma \ref{L2.1}.
	\end{proof}
	\subsection{Error estimates.}
	In this subsection, we establish the error estimates of the semi-discrete scheme \eqref{3.1} for the penalized feedback control problem of the state variable and control input.
	
	Define the auxiliary projection $ \tilde{y}_{h}^{\epsilon}\in S_{h} $ of $ y^{\epsilon} $ through the following form
	\begin{align}
		\label{3.2}
		(y_{x}^{\epsilon}-\tilde{y}_{hx}^{\epsilon}, \chi_{x})=0, \quad \forall  \ \chi\in S_{h}.
	\end{align}
	The existence and uniqueness of $\tilde{y}_{h}^{\epsilon}$ follow from the Lax-Milgram lemma.
	Let $\eta := y^{\epsilon} - \tilde{y}_{h}^{\epsilon}$ denotes the error associated with the auxiliary projection. For more details on this type of projection, see \cite{MR3790146, MR1225705}. Then the following error estimates hold
	\begin{align}
		\label{3.3}
		\norm{\eta}\leq Ch^{2}\norm{y^{\epsilon}}_{2} , \  \text{and} \ \ \norm{\eta_{t}}\leq Ch^{2}\norm{y_{t}^{\epsilon}}_{2}.
	\end{align} 
	For a detailed proof of \eqref{3.3}, refer to \cite{MR0534334}. 
	Additionally, we require estimates of $\eta$ at $x=1$ for the subsequent error analysis. The following result is proved in \cite{MR3790146, MR1225705}.
	\begin{lemma}
		\label{L3.2}
		At $x = 1$, the following estimates hold
		\begin{align}
			|\eta(x)|\leq Ch^{2}\norm{y^{\epsilon}}_{2}, \quad \text{and}\ \ 	|\eta_{t}(x)|\leq Ch^{2}\norm{y_{t}^{\epsilon}}_{2}.
		\end{align}
	\end{lemma}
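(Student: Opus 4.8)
The goal is a \emph{pointwise} (nodal) estimate of the projection error at the endpoint $x=1$, and the natural device is a Green's-function (duality) representation that converts the point value into the bilinear form $a(u,v):=(u_{x},v_{x})$ appearing in the defining relation \eqref{3.2}. Two facts are available for free: since $y^{\epsilon},\tilde{y}_{h}^{\epsilon}\in H_{\{0\}}^{1}$ we have $\eta(0)=0$, and \eqref{3.2} is precisely the Galerkin orthogonality $(\eta_{x},\chi_{x})=0$ for all $\chi\in S_{h}$. Because $a(\cdot,\cdot)$ is coercive on $H_{\{0\}}^{1}$ by the Poincar\'e inequality (the Dirichlet condition at $x=0$), the associated Green's function is well defined.

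First I would introduce $G\in H_{\{0\}}^{1}$ solving $a(G,v)=v(1)$ for all $v\in H_{\{0\}}^{1}$; solving the corresponding two-point problem $-G''=0$ with $G(0)=0$ and the natural condition $G'(1)=1$ gives explicitly $G(x)=x$. Taking $v=\eta$ yields the representation $\eta(1)=a(G,\eta)$, and subtracting an arbitrary $\chi\in S_{h}$ by orthogonality gives $\eta(1)=a(G-\chi,\eta)\le \norm{(G-\chi)_{x}}\,\norm{\eta_{x}}$. Choosing $\chi$ to be the piecewise-linear interpolant of $G$ and combining the interpolation bound $\norm{(G-\chi)_{x}}\le Ch\norm{G}_{2}$ with the $H^{1}$-projection estimate $\norm{\eta_{x}}\le Ch\norm{y^{\epsilon}}_{2}$ produces $|\eta(1)|\le Ch^{2}\norm{y^{\epsilon}}_{2}$. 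In fact, because $G(x)=x$ is itself piecewise linear, $G\in S_{h}$, so one may take $\chi=G$ and conclude $\eta(1)=a(G,\eta)=0$ exactly; the stated bound then holds a fortiori. Equivalently and more directly, $\eta(1)=\int_{0}^{1}\eta_{x}\,dx=(\eta_{x},w_{x})$ with $w(x)=x\in S_{h}$, which vanishes by \eqref{3.2}.

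For the time-derivative estimate I would differentiate \eqref{3.2} in $t$. Since the mesh, and hence $S_{h}$, is independent of $t$, this yields $(\eta_{tx},\chi_{x})=0$ for all $\chi\in S_{h}$, while $\eta_{t}(0)=0$ follows from $\eta(t,0)=0$. Repeating the representation with $\eta_{t}$ in place of $\eta$, namely $\eta_{t}(1)=a(G,\eta_{t})=(\eta_{tx},w_{x})=0$, gives the result; alternatively the duality bound $|\eta_{t}(1)|\le Ch^{2}\norm{y_{t}^{\epsilon}}_{2}$ follows from $\norm{\eta_{tx}}\le Ch\norm{y_{t}^{\epsilon}}_{2}$ exactly as above.

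The only genuinely delicate point---the regularity of the dual solution $G$, which in higher dimensions would require elliptic regularity of the domain---collapses here: the one-dimensional Green's function for $a(\cdot,\cdot)$ at $x=1$ is the linear function $x$, which lies in $S_{h}$. Consequently the nodal error is not merely $O(h^{2})$ but identically zero, and the remaining ingredient, the $H^{1}$-seminorm bound $\norm{\eta_{x}}\le Ch\norm{y^{\epsilon}}_{2}$ (the companion of the $L^{2}$ estimate quoted in \eqref{3.3}), is the standard Ritz-projection estimate. Thus the main obstacle is really just recognizing this exact nodal-interpolation structure rather than any hard analysis.
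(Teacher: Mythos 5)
Your proposal is correct, but it does not follow the paper's route, because the paper gives no proof of Lemma~\ref{L3.2} at all: it simply cites \cite{MR3790146, MR1225705}, where estimates of this type are obtained by duality (Green's-function) arguments for more general elliptic projections. Your argument is both more elementary and sharper for the specific projection used here. Since \eqref{3.2} is pure Galerkin orthogonality for the form $(u_{x},v_{x})$ on $H_{\{0\}}^{1}$, and since the Green's function for point evaluation at $x=1$, namely $G(x)=x$, is continuous, piecewise linear, and vanishes at $x=0$, it belongs to $S_{h}$ for every mesh; hence $\eta(1)=(\eta_{x},G_{x})=0$ exactly, and differentiating \eqref{3.2} in time (legitimate because $S_{h}$ is time-independent) gives $\eta_{t}(1)=(\eta_{tx},G_{x})=0$ as well, so the stated $O(h^{2})$ bounds hold a fortiori. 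What the duality route of the cited references buys is robustness: if the defining bilinear form contained variable coefficients, lower-order terms, or boundary terms (as it does in \cite{MR3790146}), or if the problem were posed in higher dimensions, then $G$ would no longer lie in $S_{h}$, nodal exactness would fail, and one would genuinely need the interpolation-plus-duality estimate $|\eta(1)|\le\norm{(G-\chi)_{x}}\,\norm{\eta_{x}}\le Ch^{2}\norm{y^{\epsilon}}_{2}$ — precisely the fallback you also spelled out, and presumably the reason the lemma is stated as an inequality rather than an identity. For the projection \eqref{3.2} as written, your proof is complete and in fact proves a stronger statement than the lemma claims.
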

	Define the error 
	\[ e:=y^{\epsilon}-y_{h}^{\epsilon}= (y^{\epsilon}-\tilde{y}_{h}^{\epsilon})-(y_{h}^{\epsilon}-\tilde{y}_{h}^{\epsilon})=: \eta - \theta,\]
	where $ \eta= (y^{\epsilon}-\tilde{y}_{h}^{\epsilon})  $ and $ \theta = (y_{h}^{\epsilon}-\tilde{y}_{h}^{\epsilon}).$
	
	Choose $\tilde{y}_{h}^{\epsilon}(0) = y_{0h}^{\epsilon}$ so that $\theta(0) = 0$. It is sufficient to estimate $\theta$, since the bounds for $\eta$ and $\eta_{t}$ are provided in Lemma \ref{L3.2} and equation \eqref{3.3}.
	
	Subtracting \eqref{3.1} from \eqref{1.12} and using \eqref{3.2}, we obtain
	\begin{align}
		\label{3.5}
		(\theta_{t}, \chi) + \nu (\theta_{x}, \chi_{x})+ \frac{\nu}{\epsilon}\theta(t, 1)\chi(1)-\alpha(\theta, \chi)&= (\eta_{t}, \chi)+ \frac{\nu}{\epsilon}\eta(t, 1)\chi(1) + \delta((y^{\epsilon})^{3}- (y_{h}^{\epsilon})^{3}, \chi)\nonumber\\[1mm]& \ \ -\frac{\nu}{\epsilon}(u^{\epsilon}(t)-u_{h}^{\epsilon}(t)) \chi(1) - \alpha (\eta, \chi),
	\end{align}
	where   
	\begin{align*}
		(y^{\epsilon})^{3}- (y_{h}^{\epsilon})^{3}= \eta^{3}- \theta^{3}+ 3\eta y^{\epsilon} (y^{\epsilon}-\eta) - 3\theta y_{h}^{\epsilon}(y_{h}^{\epsilon}-\theta),
	\end{align*}
	and $u^{\epsilon}(t)-u_{h}^{\epsilon}(t)= -r\int_{0}^{1}x(\eta - \theta)dx.$
	
	In the following lemma, we estimate $\norm{\theta}$ .
	\begin{lemma}
		\label{L3.3}
		Under assumption $(A1)$ and the regularity result \eqref{r}, there exists a decay rate $ 0< \gamma \leq \nu - \alpha - \frac{\alpha}{M} $ and $\frac{\alpha}{\nu} \leq \frac{2M}{1+M} \left(\frac{3\epsilon - r^{2}}{3\epsilon}\right)$, and a positive constant $ C =C(\norm{y_{0}}_{3}, \norm{y_{0}}_{1}) $ independent of \(h\) and \(\epsilon\), such that
		\begin{align*}
			\norm{\theta}^{2}+ \beta_{2} e^{-2\gamma t} \int_{0}^{t}e^{2\gamma s} \norm{\theta_{x}(s)}^{2} ds &+\frac{\nu}{\epsilon}e^{-2\gamma t} \int_{0}^{t}e^{2\gamma s}\theta^{2}(s, 1) ds +\delta e^{-2\gamma t}\int_{0}^{t}e^{2\gamma s}\norm{\theta(s)}_{L^{4}}^{4}ds\\[1mm]&\leq Ch^{4}(1+\frac{1}{\epsilon})e^{-2\gamma t},
		\end{align*}
		where \(\beta_{2}=(\nu -\gamma-\alpha -\frac{\alpha}{M})> 0\), and \(M>0\) is a  large constant.
	\end{lemma}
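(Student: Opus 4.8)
The plan is to bound $\theta$ by testing the error equation \eqref{3.5} with $\chi=\theta$, exploiting that the auxiliary-projection error $\eta$ is already controlled to order $h^{2}$ through \eqref{3.3} and Lemma \ref{L3.2}. With $\chi=\theta$ the left-hand side produces $\tfrac12\frac{d}{dt}\norm{\theta}^{2}+\nu\norm{\theta_{x}}^{2}+\tfrac{\nu}{\epsilon}\theta^{2}(t,1)$ together with the sign-indefinite $-\alpha\norm{\theta}^{2}$, while the right-hand side separates into three groups: the projection terms $(\eta_{t},\theta)$ and $-\alpha(\eta,\theta)$; the boundary and control terms $\tfrac{\nu}{\epsilon}\eta(t,1)\theta(t,1)$ and $-\tfrac{\nu}{\epsilon}(u^{\epsilon}-u_{h}^{\epsilon})\theta(t,1)$; and the cubic term $\delta((y^{\epsilon})^{3}-(y_{h}^{\epsilon})^{3},\theta)$. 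I would dispose of the two groups of lower-order terms first and concentrate the real work on the cubic term.

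The cubic term is the crux. Inserting the stated decomposition $(y^{\epsilon})^{3}-(y_{h}^{\epsilon})^{3}=\eta^{3}-\theta^{3}+3\eta y^{\epsilon}(y^{\epsilon}-\eta)-3\theta y_{h}^{\epsilon}(y_{h}^{\epsilon}-\theta)$ and pairing with $\theta$, the piece $-\delta(\theta^{3},\theta)=-\delta\norm{\theta}_{L^{4}}^{4}$ moves to the left as the favorable dissipation $+\delta\norm{\theta}_{L^{4}}^{4}$. For $-3\delta(\theta y_{h}^{\epsilon}(y_{h}^{\epsilon}-\theta),\theta)$ I would write $y_{h}^{\epsilon}-\theta=\tilde{y}_{h}^{\epsilon}$ and $y_{h}^{\epsilon}=\tilde{y}_{h}^{\epsilon}+\theta$, which yields the non-positive term $-3\delta\int_{0}^{1}\theta^{2}(\tilde{y}_{h}^{\epsilon})^{2}\,dx$ plus a remainder $-3\delta\int_{0}^{1}\theta^{3}\tilde{y}_{h}^{\epsilon}\,dx$; using $\int_{0}^{1}|\theta|^{3}\le\norm{\theta}_{L^{4}}^{2}\norm{\theta}$ and Young's inequality, this remainder is absorbed into $\tfrac14\delta\norm{\theta}_{L^{4}}^{4}$ at the cost of a Gronwall-type term $C\norm{\tilde{y}_{h}^{\epsilon}}_{\infty}^{2}\norm{\theta}^{2}$. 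The genuinely higher-order pieces $\delta(\eta^{3},\theta)$ and $3\delta(\eta y^{\epsilon}\tilde{y}_{h}^{\epsilon},\theta)$ are bounded by the Sobolev embedding and the uniform $L^{\infty}$ control of $y^{\epsilon}$ and $\tilde{y}_{h}^{\epsilon}=y^{\epsilon}-\eta$ coming from Theorem \ref{th2.21}, leaving $C\norm{\eta}^{2}+C\norm{\theta}^{2}$ with $\norm{\eta}^{2}\le Ch^{4}\norm{y^{\epsilon}}_{2}^{2}$.

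For the boundary and control group I would apply Young's inequality: $\tfrac{\nu}{\epsilon}\eta(t,1)\theta(t,1)$ and, using $u^{\epsilon}-u_{h}^{\epsilon}=-r\int_{0}^{1}x(\eta-\theta)\,dx$ together with \eqref{2.23}, the control term each generate a fraction of $\tfrac{\nu}{\epsilon}\theta^{2}(t,1)$ that is absorbed into the left-hand boundary term, plus residuals $\tfrac{\nu}{\epsilon}\eta^{2}(t,1)$ and $\tfrac{\nu}{\epsilon}\norm{\eta}^{2}$ estimated by Lemma \ref{L3.2} and \eqref{3.3}; this is precisely what produces the factor $(1+\tfrac1\epsilon)h^{4}$. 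The projection terms $(\eta_{t},\theta)$ and $-\alpha(\eta,\theta)$ contribute $\tfrac12\norm{\eta_{t}}^{2}+C\norm{\eta}^{2}+C\norm{\theta}^{2}$, of order $h^{4}$. The auxiliary constant $M$ enters here through the weighted Young splitting of the $\eta$-inner products, and the $\tfrac{\alpha}{M}$ correction in the admissible decay rate is exactly the slack needed to keep $\beta_{2}=2\nu-\gamma-\alpha-\tfrac{\alpha}{M}-\tfrac{2\nu r^{2}}{3\epsilon}>0$ after all $\norm{\theta}^{2}$ contributions are consolidated into $\norm{\theta_{x}}^{2}$ via the Poincar\'e inequality.

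Finally I would multiply by $2e^{2\gamma t}$, set $\phi(t)=e^{2\gamma t}\norm{\theta}^{2}$, and integrate from $0$ to $t$ with $\theta(0)=0$, arriving at $\phi(t)+\ldots\le\int_{0}^{t}C\norm{y^{\epsilon}}_{\infty}^{2}\phi\,ds+Ch^{4}(1+\tfrac1\epsilon)\norm{y_{0}}_{3}^{2}$, where the forcing integral is a genuine constant because $\int_{0}^{t}e^{2\gamma s}\norm{y_{t}^{\epsilon}}_{2}^{2}\,ds$ and $\int_{0}^{t}e^{2\gamma s}\norm{y^{\epsilon}}_{2}^{2}\,ds$ are bounded via Theorem \ref{th2.21} and Lemmas \ref{L2.1}, \ref{L2.4} and Remark \ref{r2.1}. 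The main obstacle is the nonlinear coupling $\int_{0}^{t}C\norm{y^{\epsilon}}_{\infty}^{2}\phi\,ds$: it must be removed by Gronwall's inequality, and since $\norm{y^{\epsilon}}_{\infty}^{2}\le C\norm{y^{\epsilon}}_{1}^{2}\le Ce^{-2\gamma s}\norm{y_{0}}_{1}^{2}$ by Lemma \ref{L2.2} gives $\int_{0}^{t}C\norm{y^{\epsilon}}_{\infty}^{2}\,ds\le C\norm{y_{0}}_{1}^{2}$, the Gronwall factor is exactly $\exp(C\norm{y_{0}}_{1}^{2})$, yielding the stated estimate after multiplication by $e^{-2\gamma t}$. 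Throughout, the delicate point is to verify that every constant arising from the a priori bounds is independent of $h$ and of $\epsilon$ apart from the explicit $\tfrac1\epsilon$, so that the final bound scales as claimed.
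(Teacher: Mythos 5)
Your proposal is correct and follows essentially the same route as the paper's proof: testing \eqref{3.5} with $\chi=\theta$, using the substitution $y_{h}^{\epsilon}=\tilde{y}_{h}^{\epsilon}+\theta$ to extract the dissipative $\delta\norm{\theta}_{L^{4}}^{4}$ and the sign-favorable $-3\delta\int_{0}^{1}\theta^{2}(\tilde{y}_{h}^{\epsilon})^{2}dx$, introducing the constant $M$ through weighted Young splittings of the $\eta$-terms so that $\beta_{2}=2\nu-\gamma-\alpha-\frac{\alpha}{M}-\frac{2\nu r^{2}}{3\epsilon}$ survives the Poincar\'e consolidation, absorbing the boundary and control residuals into $\frac{\nu}{\epsilon}\theta^{2}(t,1)$ to produce the $(1+\frac{1}{\epsilon})h^{4}$ forcing, and closing with Gronwall's inequality whose factor $e^{C\norm{y_{0}}_{1}^{2}}$ comes from $\int_{0}^{t}\norm{y^{\epsilon}}_{1}^{2}ds\leq C\norm{y_{0}}_{1}^{2}$ together with Theorem \ref{th2.21}, Lemma \ref{L3.2} and \eqref{3.3}. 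The only differences are cosmetic groupings of the cubic pieces, so nothing further is needed.
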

	\begin{proof}
		Select $ \chi= \theta $ in \eqref{3.5} to get
		\begin{align}
			\label{3.6}
			\nonumber	\frac{1}{2}\frac{d}{dt}\norm{\theta}^{2}+\nu \norm{\theta_{x}}^{2}&+ \frac{\nu}{\epsilon}\theta^{2}(t, 1)-\alpha\norm{\theta}^{2}+ \delta \norm{\theta}_{L^{4}}^{4}	\nonumber\\[1mm]&=\left((\eta_{t}, \theta)-\alpha(\eta, \theta)\right)+\frac{\nu}{\epsilon}\eta(t, 1)\theta(t, 1)+ \delta(\eta^{3}, \theta)- 3\delta(\eta^{2}y^{\epsilon}, \theta)	\nonumber\\[1mm]& \ \ + 3\delta(\eta (y^{\epsilon})^{2}, \theta)- 3\delta(\theta(y_{h}^{\epsilon})^{2}, \theta)+3\delta(\theta^{2}y_{h}^{\epsilon}, \theta)+\frac{r\nu}{\epsilon}\left(\int_{0}^{1}x(\eta -\theta)dx\right)\theta(t, 1),	\nonumber\\[1mm]&=:\sum_{i=1}^{5}I_{i}(\theta).
		\end{align} 
		The initial term  $ I_{1}(\theta) $ on the right hand side of \eqref{3.6} is estimated by
		\begin{align*}
			I_{1}(\theta)=\left((\eta_{t}, \theta)-\alpha(\eta, \theta)\right)\leq C\norm{\eta_{t}}^{2}+ C\norm{\eta}^{2}+ \frac{\alpha}{2M}\norm{\theta}^{2},
		\end{align*}
		where \( M \) is a large positive constant.
		
		The second term $ I_{2}(\theta) $ on the right hand side of \eqref{3.6} is bounded by
		\begin{align*}
			I_{2}(\theta) =\alpha(\eta^{3}, \theta)+\frac{\nu}{\epsilon}\eta(t, 1)\theta(t, 1)\leq C\norm{\eta}_{L^{4}}^{4}+ \frac{\delta}{4}\norm{\theta}_{L^{4}}^{4}+ \frac{C}{\epsilon}\eta^{2}(t, 1)+ \frac{\nu}{4\epsilon}\theta^{2}(t, 1).
		\end{align*}
		A use of Young's inequality to  $ I_{3}(\theta) $ on the right hand side of \eqref{3.6} gives
		\begin{align*}
			I_{3}(\theta)=-3\delta(\eta^{2}y^{\epsilon}, \theta)+ 3\delta(\eta (y^{\epsilon})^{2}, \theta)\leq C\norm{\eta}_{L^{4}}^{4}\norm{y^{\epsilon}}_{\infty}^{2}+ \frac{\alpha}{2M}\norm{\theta}^{2}+ C\norm{\eta}^{2}\norm{y^{\epsilon}}_{\infty}^{4}.
		\end{align*}
		Applying $ y_{h}^{\epsilon} =\theta+\tilde{y}_{h}^{\epsilon}$ and using the Young's inequality with $ \norm{\tilde{y}_{h}^{\epsilon}}_{\infty}\leq C\norm{y^{\epsilon}}_{1} $ in  $ I_{4}(\theta) $ on the right hand side of \eqref{3.6}, we arrive at
		\begin{align*}
			I_{4}(\theta)=-3\delta(\theta (\tilde{y}_{h}^{\epsilon})^{2}, \theta)- 3\delta(\theta^{2}\tilde{y}_{h}^{\epsilon}, \theta)\leq C\norm{y^{\epsilon}}_{1}^{2}\norm{\theta}^{2}+\frac{\delta}{4}\norm{\theta}_{L^{4}}^{4}.
		\end{align*}
		Finally, the last term $ I_{5}(\theta) $ on the right hand side of \eqref{3.6} with the use of the H\"older's inequality and Young's inequality is bounded by
		\begin{align*}
			I_{5}(\theta)\leq \frac{C}{\epsilon} \norm{\eta}^{2}+ \frac{2\nu r^{2}}{3\epsilon}\norm{\theta}^{2}+\frac{\nu}{4\epsilon}\theta^{2}(t, 1).
		\end{align*}
		Substituting the estimates of \( (I_i(\theta))_{i=1}^{5} \) into \eqref{3.6} and applying Lemma \ref{L2.1} with  Sobolev embedding, we obtain
		\begin{align*}
			\frac{1}{2}\frac{d}{dt}\norm{\theta}^{2}+\nu \norm{\theta_{x}}^{2}&+ \frac{\nu}{2\epsilon}\theta^{2}(t, 1)+\frac{\delta}{2}\norm{\theta}_{L^{4}}^{4}\\[1mm]&\leq C\Big(\norm{\eta_{t}}^{2}+C(1+\frac{1}{\epsilon}) \norm{\eta}^{2}+\norm{\eta}_{L^{4}}^{4} +\frac{C}{\epsilon} \eta^{2}(t, 1)\Big)\\[1mm]&\quad+ (\alpha +\frac{\alpha}{M}+ \frac{2\nu r^{2}}{3\epsilon})\norm{\theta}^{2}+ C\norm{y^{\epsilon}}_{1}^{2}\norm{\theta}^{2}.
		\end{align*}
		Multiplying the above inequality by  $ 2e^{2\gamma t} $, we get
		\begin{align*}
			\frac{d}{dt}(\norm{\theta}^{2}e^{2\gamma t})&+2\nu e^{2\gamma t} \norm{\theta_{x}}^{2}+ \frac{\nu}{\epsilon}e^{2\gamma t}\theta^{2}(t, 1)+\delta e^{2\gamma t}\norm{\theta}_{L^{4}}^{4}\\[1mm]&\leq Ce^{2\gamma t}\Big(\norm{\eta_{t}}^{2}+C(1+\frac{1}{\epsilon}) \norm{\eta}^{2}+\norm{\eta}_{L^{4}}^{4} + \frac{C}{\epsilon}\eta^{2}(t, 1)\Big)+ (2\alpha +\frac{2\alpha}{M}+2\gamma+ \frac{4\nu r^{2}}{3\epsilon})e^{2\gamma t}\norm{\theta}^{2}\\[1mm]&\quad + Ce^{2\gamma t}\norm{y^{\epsilon}}_{1}^{2}\norm{\theta}^{2}.
		\end{align*}
		Using the Poincaré inequality and  Sobolev embedding, we deduce
		\begin{align*}
			\frac{d}{dt}(\norm{\theta}^{2}e^{2\gamma t})&+(2\nu -\gamma-\alpha -\frac{\alpha}{M} -\frac{2\nu r^{2}}{3\epsilon})e^{2\gamma t} \norm{\theta_{x}}^{2}+ \frac{\nu}{\epsilon}e^{2\gamma t}\theta^{2}(t, 1)+\delta e^{2\gamma t}\norm{\theta}_{L^{4}}^{4}\\[1mm]&\leq Ce^{2\gamma t}(\norm{\eta_{t}}^{2}+C (1+\frac{1}{\epsilon})\norm{\eta}^{2}+\norm{\eta}_{1}^{4} +\frac{C}{\epsilon} \eta^{2}(t, 1)) \ \ +Ce^{2\gamma t}\norm{y^{\epsilon}}_{1}^{2}\norm{\theta}^{2}.
		\end{align*}
		Applying Gronwall's inequality along with Lemma \ref{L3.2}, estimate \eqref{3.3}, and Theorem \ref{th2.21} with $ \gamma=0 $, we get
		\begin{align*}
			e^{2\gamma t}\norm{\theta}^{2}+ \beta_{2} \int_{0}^{t}e^{2\gamma s} \norm{\theta_{x}(s)}^{2} ds &+ \frac{\nu}{\epsilon}\int_{0}^{t}e^{2\gamma s}\theta^{2}(s, 1) ds +\delta \int_{0}^{t}e^{2\gamma s}\norm{\theta(s)}_{L^{4}}^{4}ds\\[1mm]&\leq Ch^{4}\left( \int_{0}^{t}(\norm{y_{t}^{\epsilon}(s)}_{2}^{2}+(1+\frac{1}{\epsilon})\norm{y^{\epsilon}(s)}_{2}^{2} )ds\right)e^{(C\norm{y_{0}}_{1}^{2})},
		\end{align*}
		where \(\beta_{2}=(\nu -\gamma-\alpha -\frac{\alpha}{M})\geq 0\).
		
		Finally, multiplying both sides by  $ e^{-2\gamma t},$ and using Theorem \ref{th2.21} completes the proof.
	\end{proof}
	\begin{remark}
		\label{r3.1}
		Using the Poincar\'e inequality, we have
		\[\norm{\theta}^{2}\leq C\norm{\theta_{x}}^{2}.\]
		Multiplying the above inequality by \(e^{2\gamma t}\) and integrating over the interval \([0, t]\), we obtain from Lemma \ref{L3.3}
		\begin{align}
			\int_{0}^{t}e^{2\gamma s}\norm{\theta(s)}^{2}ds \leq C(\norm{y_{0}}_{3}^{2})h^{4}(1+\frac{1}{\epsilon}).
		\end{align}
	\end{remark}
	The next lemma proves the estimate of $\theta$ in the \(H^{1}\)-norm.
	\begin{lemma}
		\label{L5.4}
		Let assumption $(A1)$ and the regularity result \eqref{r} hold. Then, there exists a positive constant $ C=C(\norm{y_{0}}_{3}, \norm{y_{0}}_{1}) $ independent of \(h\) and \(\epsilon\) such that
		\begin{align*}
			\nu \norm{\theta_{x}}^{2}&+ \frac{\delta}{2}\norm{\theta}_{L^{4}}^{4}+ \frac{\nu}{2\epsilon}\theta^{2}(t, 1))+ e^{-2\gamma t}\int_{0}^{t}e^{2\gamma s}\norm{\theta_{t}(s)}^{2}ds\leq Ce^{-2\gamma t}h^{4}(1+\frac{1}{\epsilon}).
		\end{align*}
	\end{lemma}
	\begin{proof}
		Choose $ \chi =\theta_{t} $ in \eqref{3.5} to have
		\begin{align}
			\label{3.7}
			\nonumber
			\norm{\theta_{t}}^{2}+ \frac{\nu}{2}\frac{d}{dt}\norm{\theta_{x}}^{2}&+ \frac{\nu}{2\epsilon}\frac{d}{dt}(\theta^{2}(t,1))-\alpha (\theta, \theta_{t})+\frac{\delta}{4}\frac{d}{dt}\norm{\theta}_{L^{4}}^{4}\\[1mm]&=((\eta_{t}, \theta_{t})-\alpha(\eta, \theta_{t})) +\frac{\nu}{\epsilon}\eta(t, 1)\theta_{t}(t, 1) +\delta(\eta^{3}, \theta_{t})-3\delta (\eta (y^{\epsilon})^{2}, \theta_{t}) \nonumber\\[1mm]& \ \ +3\delta (\eta^{2} y^{\epsilon}, \theta_{t})-3\delta (\theta (y_{h}^{\epsilon})^{2}, \theta_{t})+3\delta (\theta^{2} y_{h}^{\epsilon}, \theta_{t}) -\frac{\nu}{\epsilon}(u^{\epsilon}(t)-u_{h}^{\epsilon}(t)) \theta_{t}(t, 1),\nonumber\\[1mm]&=:\sum_{i=1}^{5}I_{1}(\theta_{t}).
		\end{align}
		On the right hand side of \eqref{3.7}, the first term $ I_{1}(\theta_{t}) $ yields
		\begin{align*}
			I_{1}(\theta_{t})=\left((\eta_{t}, \theta_{t})- \alpha(\eta, \theta_{t})\right)\leq C(\norm{\eta_{t}}^{2}+ \norm{\eta}^{2})+ \frac{1}{8}\norm{\theta_{t}}^{2}.
		\end{align*}
		In \eqref{3.7}, the second term $ I_{2}(\theta_{t}) $ on the right side can be estimated by
		\begin{align*}
			I_{2}(\theta_{t})=\delta(\eta^{3}, \theta_{t})\leq C\norm{\eta}_{L^{6}}^{6}+\frac{1}{8}\norm{\theta_{t}}^{2}.
		\end{align*}
		Substituting $ y_{h}^{\epsilon}=\theta+ \tilde{y}_{h}^{\epsilon} $ into the third term $ I_{3}(\theta_{t}) $ on the right hand side of \eqref{3.7}, we arrive at
		\begin{align*}
			I_{3}(\theta_{t})=3\delta (\eta (y^{\epsilon})^{2}, \theta_{t})-3\delta (\eta^{2} y^{\epsilon}, \theta_{t})-3\delta(\theta^{2}\tilde{y}_{h}^{\epsilon}, \theta_{t})-3\delta(\theta (\tilde{y}_{h}^{\epsilon})^{2}, \theta_{t}).
		\end{align*}
		Using the Cauchy-Schwarz inequality and Young's inequality to the above equation, we observe that
		\begin{align*}
			I_{3}(\theta_{t})\leq C\norm{\eta}^{2}\norm{y^{\epsilon}}_{\infty}^{4}+ C\norm{\eta}_{L^{4}}^{4}\norm{y^{\epsilon}}_{\infty}^{2}+C\norm{\theta}^{2}\norm{\tilde{y}_{h}^{\epsilon}}_{\infty}^{4}+ C\norm{\theta}_{L^{4}}^{4}\norm{\tilde{y}_{h}^{\epsilon}}_{\infty}^{2}+\frac{1}{8}\norm{\theta_{t}}^{2}.
		\end{align*}
		The fourth term $ I_{4}(\theta_{t}) $ on the right hand side of \eqref{3.7} is bounded by
		\begin{align*}
			I_{4}(\theta_{t})=\frac{\nu}{\epsilon}\eta(t, 1)\theta_{t}(t, 1)
			\leq \frac{\nu}{\epsilon}\frac{d}{dt}(\eta(t, 1)\theta(t, 1))+C\eta_{t}^{2}(t, 1)+\frac{4 r^{2}}{3\epsilon^{2}}\theta^{2}(t, 1).	
		\end{align*}
		Lastly, the final term $ I_{5}(\theta_{t})$  on the right hand side of \eqref{3.7} is estimated by
		\begin{align*}
			I_{5}(\theta_{t})=\frac{\nu}{\epsilon}(u^{\epsilon}(t)-u_{h}^{\epsilon}(t)) \theta_{t}(t, 1)\leq \frac{\nu}{\epsilon}\frac{d}{dt}((u^{\epsilon}(t)-u_{h}^{\epsilon}(t)) \theta(t, 1))+\frac{1}{8}\norm{\theta_{t}}^{2}+\frac{4r^{2}}{3\epsilon^{2}}\theta^{2}(t, 1)+C\norm{\eta_{t}}^{2}.
		\end{align*}
		Substituting all these estimates into \eqref{3.7} and multiplying by $ 2 e^{2\gamma t} $ in the resulting inequality, we get
		\begin{align*}
			\frac{d}{dt}(e^{2\gamma t}(\nu \norm{\theta_{x}}^{2}+ \frac{\delta}{2}\norm{\theta}_{L^{4}}^{4}&+ \frac{\nu}{\epsilon}\theta^{2}(t, 1)))+ e^{2\gamma t}\norm{\theta_{t}}^{2}\\[1mm]&\leq \frac{d}{dt}\left(e^{2\gamma t}\Big(\frac{\alpha}{2}\norm{\theta}^{2} + \frac{\nu}{\epsilon}\eta(t, 1)\theta(t,1) +\frac{\nu}{\epsilon}(u^{\epsilon}(t)-u_{h}^{\epsilon}(t))\theta(t, 1) \Big)\right)\\[1mm]& \ \ + 2\gamma e^{2\gamma t}\left(\nu \norm{\theta_{x}}^{2}+ \frac{\delta}{2}\norm{\theta}_{L^{4}}^{4}+ \frac{\nu}{\epsilon}\theta^{2}(t, 1)\right)\\[1mm]& \ \ -2\gamma e^{2\gamma t}\left(\frac{\alpha}{2}\norm{\theta}^{2} + \frac{\nu}{\epsilon}\eta(t, 1)\theta(t,1) +\frac{\nu}{\epsilon}(u^{\epsilon}(t)-u_{h}^{\epsilon}(t))\theta(t, 1) \right)\\[1mm]& \ \ +Ce^{2\gamma t}\left(\norm{\eta_{t}}^{2}+ \norm{\eta}^{2}+\norm{\eta}_{L^{6}}^{6}+ \eta_{t}^{2}(t, 1)+ \norm{\eta}_{L^{4}}^{4}\norm{y^{\epsilon}}_{\infty}^{2}+\norm{\eta}^{2}\norm{y^{\epsilon}}_{\infty}^{4}\right)\\[1mm]& \ \ +Ce^{2\gamma t}\norm{\tilde{y}_{h}^{\epsilon}}_{\infty}^{2}(\norm{\theta}_{L^{4}}^{4}+\norm{\theta}^{2}\norm{\tilde{y}_{h}^{\epsilon}}_{\infty}^{2} )+ \frac{16r^{2}}{3\epsilon^{2}}e^{2\gamma t}\theta^{2}(t, 1).
		\end{align*}
		Integrating with respect to time  over \([0,t]\)  and using  Young's inequality, we obtain 
		\begin{align*}
			e^{2\gamma t}(\nu \norm{\theta_{x}}^{2}&+ \frac{\delta}{2}\norm{\theta}_{L^{4}}^{4}+ \frac{\nu}{2\epsilon}\theta^{2}(t, 1))+ \int_{0}^{t}e^{2\gamma s}\norm{\theta_{t}}^{2}ds\\[1mm]&\leq Ce^{2\gamma t}\norm{\theta}^{2}+ C\max\{1, \frac{1}{\epsilon}\}\int_{0}^{t}e^{2\gamma s}\left(\norm{\theta_{x}}^{2}+ \norm{\theta}_{L^{4}}^{4}+ \theta^{2}(s, 1)\right)ds+ C(\eta^{2}(t, 1)+\norm{\eta}^{2})\\[1mm]& \ \ +C\int_{0}^{t}e^{2\gamma s}\left(\norm{\eta_{t}}^{2}+\frac{1}{\epsilon} \norm{\eta}^{2}+\norm{\eta}_{L^{6}}^{6}+ \eta_{t}^{2}(s, 1)+\frac{1}{\epsilon}\eta^{2}(s,1)+ \norm{\eta}_{L^{4}}^{4}\norm{y^{\epsilon}}_{\infty}^{2}+\norm{\eta}^{2}\norm{y^{\epsilon}}_{\infty}^{4}\right)ds\\[1mm]& \ \ +C\max\{1, \frac{1}{\epsilon}\}\int_{0}^{t}e^{2\gamma s}\left(\norm{\tilde{y}_{h}^{\epsilon}}_{\infty}^{2}(\norm{\theta}_{L^{4}}^{4}+\norm{\theta}^{2}\norm{\tilde{y}_{h}^{\epsilon}}_{\infty}^{2} )+ \frac{1}{\epsilon}\theta^{2}(s, 1) \right) ds.
		\end{align*}
		Using \eqref{3.3} along with Lemmas \ref{L3.2} – \ref{L3.3} and Theorem \ref{th2.21}, and noting that $\norm{\tilde{y}_{h}^{\epsilon}}_{\infty} \leq C\norm{y^{\epsilon}}_{1}$, we apply the Sobolev embedding and Young's inequality to the above estimate. Consequently, we have	
		\begin{align*}
			e^{2\gamma t}(\nu \norm{\theta_{x}}^{2}&+ \frac{\delta}{2}\norm{\theta}_{L^{4}}^{4}+ \frac{\nu}{2\epsilon}\theta^{2}(t, 1))+ \int_{0}^{t}e^{2\gamma s}\norm{\theta_{t}}^{2}ds\leq C(\norm{y_{0}}_{3}^{2})h^{4}(1+\frac{1}{\epsilon}).
		\end{align*}
		The proof is completed upon multiplying both sides by $ e^{-2\gamma t}$.
	\end{proof}
	\begin{remark}
		\label{r5.21}
		From Lemma \ref{L3.3}, we arrive at 
		\[ \|y^{\epsilon}-y^{\epsilon}_{h}\|\leq C(h^{2}+\frac{h^{2}}{\sqrt{\epsilon}}),\] and 
		\[\|u^{\epsilon}-u^{\epsilon}_{h}\|=r\|\int_{0}^{1}x(\eta - \theta)dx\|\leq C r(h^{2}+\frac{h^{2}}{\sqrt{\epsilon}}),\]
		where \(u^{\epsilon}=u^{\epsilon}(t)\).
	\end{remark}
	Finally, we state the following theorem, which establishes the error estimates between the closed loop system \eqref{ch1}-\eqref{c} and the penalized feedback control problem \eqref{p1}-\eqref{p2}, for both the state variable and the control input.
	\begin{theorem}
		\label{th3.2}
		Suppose that hypothesis of Lemma \ref{L3.3} and Theorem \ref{th2.1} holds, then we get
		\begin{align*}
			\norm{y-y_{h}^{\epsilon}}_{i}\leq C(\norm{y_{0}}_{3})\Big(\sqrt{\epsilon}+h^{2-i}+\frac{h^{2-i}}{\sqrt{\epsilon}}\Big), \quad i=0, 1,
		\end{align*}
		and
		\[
		\norm{u-u_{h}^{\epsilon}}\leq C(\norm{y_{0}}_{3}) r \Big(\sqrt{\epsilon}+ h^{2}+\frac{h^{2}}{\sqrt{\epsilon}}\Big),
		\]
        where \(\norm{.}_{0}=\|.\|.\)
	\end{theorem}
	\begin{proof}
		Since
		\begin{align*}
			y-y_{h}^{\epsilon}=y-y^{\epsilon}+y^{\epsilon}-y_{h}^{\epsilon}=y-y^{\epsilon}+(\eta- \theta),
		\end{align*}
		from Lemmas \ref{L3.3} -\ref{L5.4} and Theorem \ref{th2.1} with the triangle inequality, we obtain
		\begin{align*}
			\norm{y-y_{h}^{\epsilon}}_{i}\leq C\Big(\sqrt{\epsilon}+h^{2-i}+\frac{h^{2-i}}{\sqrt{\epsilon}}\Big), \quad i=0, 1.
		\end{align*}
		The first part of the proof is completed.
		
		Since $u(t)=-r\int_{0}^{1}xy(t,x)dx, \quad  u^{\epsilon}(t)=-r\int_{0}^{1}xy^{\epsilon}(t,x)dx, $ and $ u_{h}^{\epsilon}(t)=-r\int_{0}^{1}xy_{h}^{\epsilon}(t,x)dx,$ we can write
		\begin{align*}
			u-u_{h}^{\epsilon}=u-u^{\epsilon} -r\int_{0}^{1}x(\eta - \theta)dx=r\int_{0}^{1}xz(t,x)dx-r\int_{0}^{1}x(\eta - \theta)dx.
		\end{align*}
		Hence, using Theorem \ref{th2.1} and Lemma \ref{L3.3}  with $ \gamma=0 $, we observe that
		\begin{align*}
			\norm{u-u_{h}^{\epsilon}}^{2}\leq C(\norm{y_{0}}_{3}^{2}) r^{2}\Big(\epsilon+ h^{4}+\frac{h^{4}}{\epsilon}\Big).
		\end{align*}
		This completes the second part of the proof.
	\end{proof}
	\begin{remark}
		\label{r5.2}
		If we choose \(\epsilon= h^{l}\), \( \  l>0\), then we arrive at from Theorem \ref{th3.2} 
		\[\norm{y-y_{h}^{\epsilon}}\leq C h^{\gamma_{1}},\]
		and 
		\[\norm{u-u_{h}^{\epsilon}}\leq C r h^{\gamma_{1}},\]
		where \(\gamma_{1}=\min \{\frac{l}{2}, 2, 2-\frac{l}{2}\} \) and \(l<4\) . 
	\end{remark}
	
	\section{Numerical Examples.}
	In this section, we present numerical experiments to illustrate the stabilization of the Chafee-Infante equation and to evaluate the errors in both the state variable and the control input using a penalization approach for the Dirichlet boundary control problem. In addition, we verify the convergence of the solution of the penalized feedback control problem \eqref{p1}-\eqref{p2} to the solution of the original closed-loop system \eqref{ch1}-\eqref{c}, as the penalty parameter \(\epsilon\rightarrow 0\). Two numerical strategies are employed: (i) direct computation of the original Dirichlet closed-loop system, and (ii) successive approximation of the penalized problem following \cite{ravindran2017finite} and \cite{18M1233716}. The computational result of the original closed loop system \eqref{ch1}-\eqref{c} with Dirichlet boundary feedback control \cite{19M1252235} is shown using the transformation technique discussed in \cite{kang1991unbounded}.
	
	We now describe a fully discrete version of the penalized finite element scheme \eqref{3.1}, using the backward Euler method for time discretization. Let $0 < k < 1$ denote the time step size, and define the time levels by $t_{n} = nk$, where $n$ is a nonnegative integer. For any sufficiently smooth function $\phi$ defined on $[0, T]$, we set $\phi^{n} = \phi(t_{n})$ and introduce the backward difference operator by $\bar{\partial}_{t} \phi^{n} = \frac{\phi^{n} - \phi^{n-1}}{k}$. We denote  \(Y^{n}=Y_{h}^{n}\in S_{h}\) as the approximation of \(y^{\epsilon}(t)\) at \(t=t_{n}\).
	
	The backward Euler method is applied to the semi-discrete scheme \eqref{3.1} to find a sequence $\{Y^{n}\}_{n \geq 1}$ such that
	\begin{align}
		\label{4.1}
		(\bar{\partial}_{t}Y^{n}, \chi) + \nu (Y_{x}^{n}, \chi_{x}) + \frac{\nu}{\epsilon}Y^{n}(1)\chi(1) + \delta\Big((Y^{n})^{3}, \chi\Big) = \frac{\nu}{\epsilon}u_{h}^{\epsilon}(t_{n})\chi(1) + \alpha(Y^{n}, \chi), \quad \forall \ \chi \in S_{h},
	\end{align}
	 where the control input is defined as
	\begin{align}
		\label{6.2}
			u_{h}^{\epsilon}(t_{n}) = -r\int_{0}^{1}x Y^{n}(x) \, dx.
	\end{align}
    The initial approximation is taken as $Y^{0} = y_{0h}$.
	
	We solve the nonlinear system \eqref{4.1} using Newton’s method  to compute $Y^{n+1}$, with the iteration initialized by the previous time level solution $Y^{n}$.
	For notational convenience, we denote the discrete solution of the penalized system by  \( Y^{\epsilon} \), and that of the original closed-loop system \eqref{ch1}-\eqref{c} by \(Y\). Since the exact solution of the penalized system is not known, we take the refined mesh solution (reference solution) as the exact solution for error evaluation. Here,
    We denote this reference state by $y^{\epsilon}_{r}$ and the corresponding control input by $u^{\epsilon}_{r}$.
	
	In the following example, our aim is to show the asymptotic behavior of the penalized feedback control problem under varying choices of the penalty parameter $\epsilon$. Moreover, we study the order of convergence for both the state variable and control input. In addition, we demonstrate the convergence $Y^{\epsilon}$ to $Y$ as \(\epsilon \) goes to zero.
	\begin{example}
		\label{ex1}
		We consider the initial condition $\ y_{0}(x) = \sin(\pi x)$, at $t=0$ for $\ x\in
		(0, 1)$. The diffusion coefficient is set to $\nu = 0.2$, and the control gain is taken as $r = \sqrt{\epsilon}$. We choose the space step size \(h=\frac{1}{100}\) and the time step size \(k=\frac{1}{900}\). We set the time interval $[0,1]$.
	\end{example}
	
	The Chafee-Infante equation \eqref{ch1}-\eqref{ch2} with zero Dirichlet boundary condition is asymptotically stable for $\alpha = 0.19 = \delta$ since \(\alpha\leq 2\nu\); this corresponds to the ``Zero Dirichlet solution'' in Figure \ref{fig:ex1}(i) with $\epsilon=0$ and $r = 0$, see Remark \ref{r2.2}. However, when the feedback control input $u(t)$ and the penalized feedback control input $u^{\epsilon}(t)$ are applied, the solution decays faster than in the zero Dirichlet boundary case. These are denoted in the same figure by ``Controlled Dirichlet solution" and ``Controlled penalized solution”, respectively.
    
	From Figures \ref{fig:ex1}(ii) and (iii), we observe that the state variable and the control input in the $L^{2}$-norm converge to the solution of the Dirichlet problem for different values of $\epsilon$. Moreover, for various values of \(\alpha\)  satisfying Assumption $(A1)$, the trajectories of both the state variable and the control input remain unchanged, and eventually all of them decay to zero as $t\to \infty$.
	\begin{figure}[ht]
		\centering
		(i)\includegraphics[width= 0.45\textwidth]{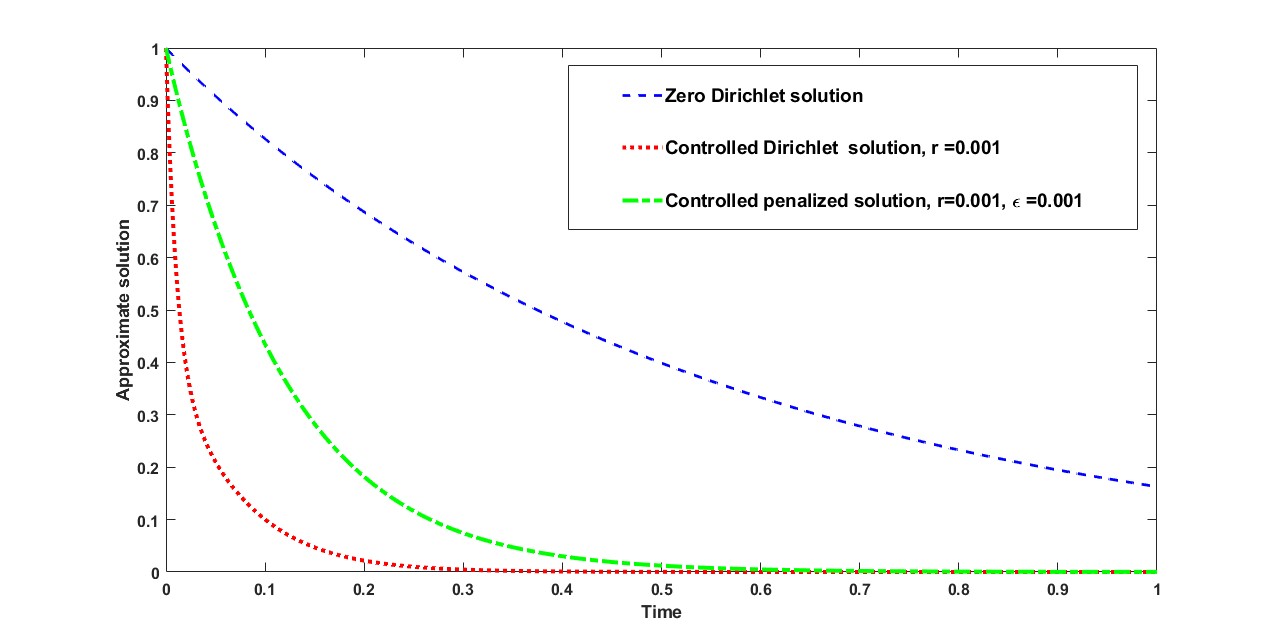}
		(ii)\includegraphics[width= 0.46\textwidth]{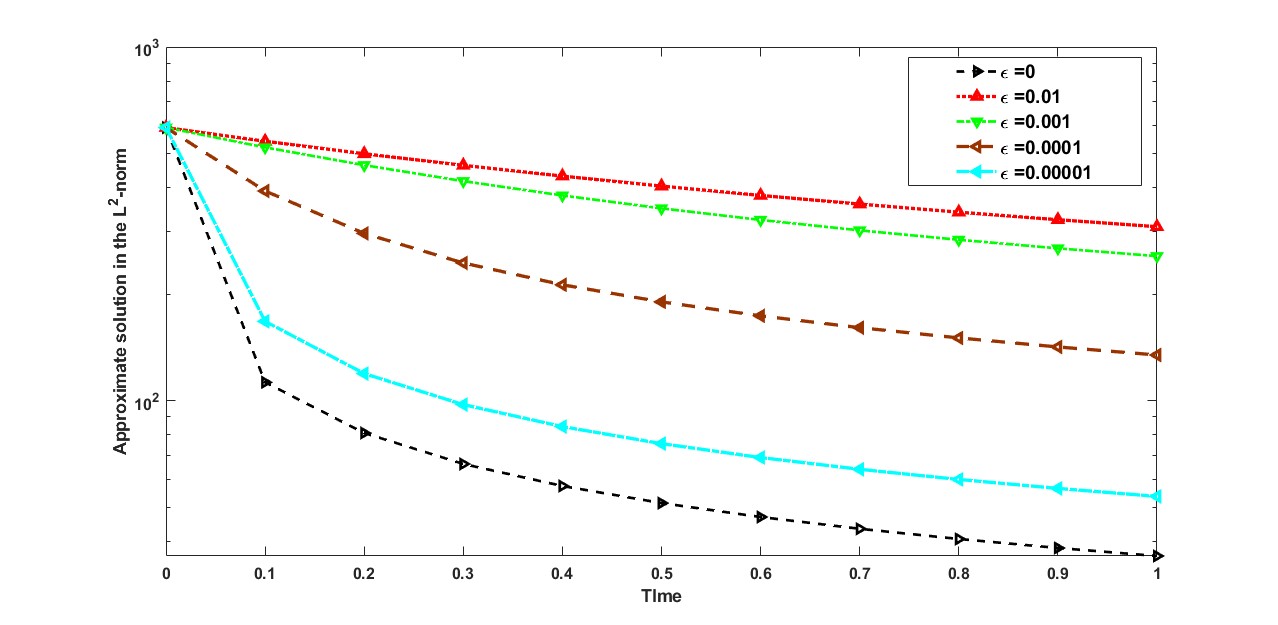}
		(iii) \includegraphics[width= 0.45\textwidth]{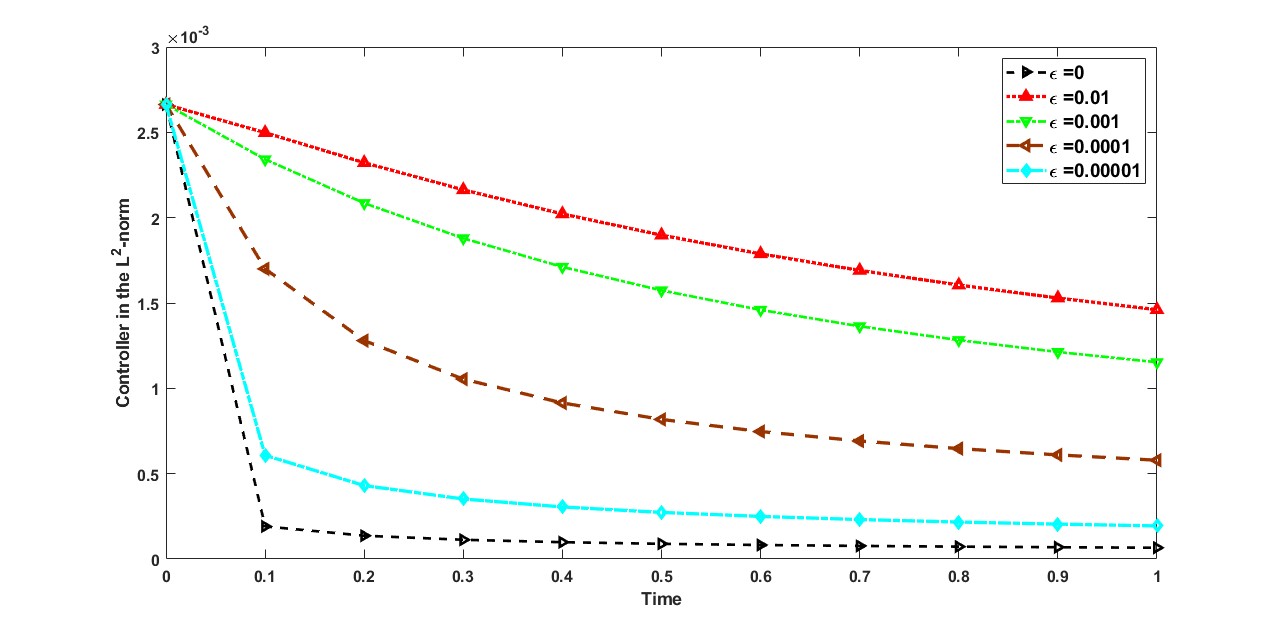}
		(iv) \includegraphics[width= 0.45\textwidth]{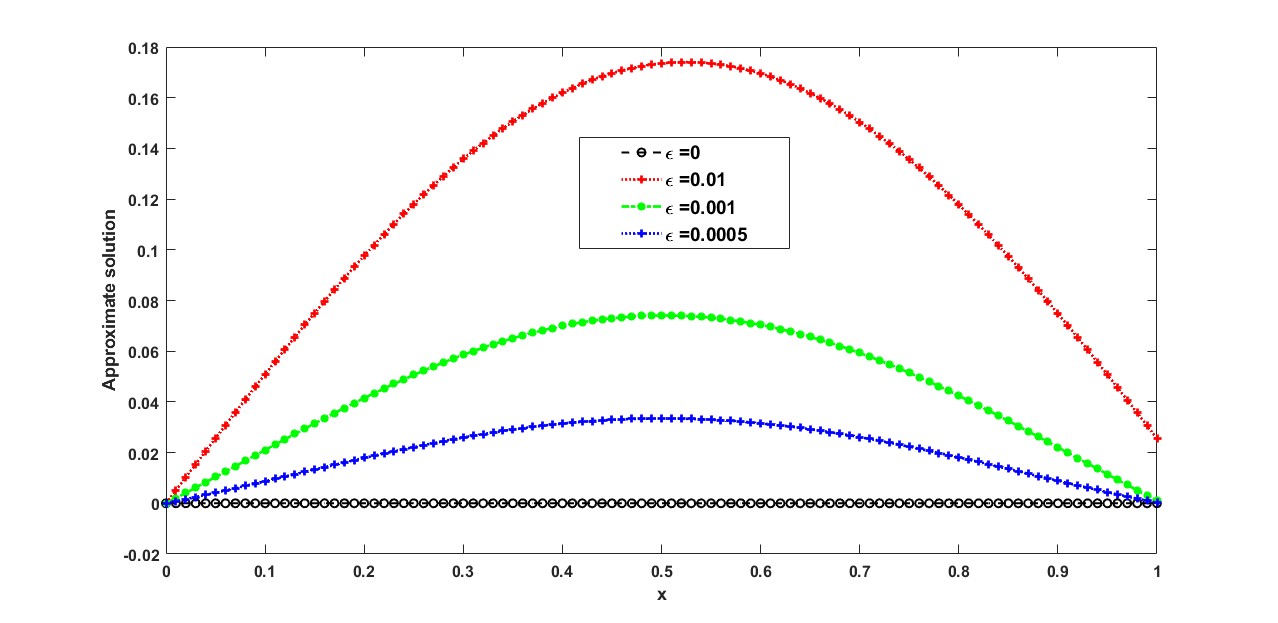}
		\caption{Example \ref{ex1}: \textbf{(i)} Approximate solution at $x=0.5$. \textbf{(ii)} Approximate solution in the $L^{2}$-norm for various values of $\epsilon$. \textbf{(iii)} Controller in the $L^{2}$-norm for different values of $\epsilon$. \textbf{(iv)} Approximate solution in the $L^{2}$-norm.}
		\label{fig:ex1}	
	\end{figure}	
    
To further validate the accuracy of the penalization approach, we now show that the solution $Y^{\epsilon}$ of the penalized system converges to the solution $Y$ of the original system as \(\epsilon \rightarrow 0\). Since the exact solution of the closed loop system \eqref{ch1}-\eqref{c} is not known, we compute it numerically using a finite element method, where the Dirichlet boundary conditions are imposed exactly at the boundary mesh points.  Moreover, for the convergence analysis, we choose \(r\) sufficiently small. In Figures \ref{fig:ex1}(ii)-(iv), \(\epsilon=0\) represents the solution of the closed loop system \eqref{ch1}-\eqref{c}, together with the penalized solutions of the state variable and control input for various values of \(\epsilon\) with $k=\frac{1}{7050}$. We observe that the solutions of the penalized control problem converge to the solution of the original closed-loop system \eqref{ch1}-\eqref{c} as $\epsilon\rightarrow 0$. This convergence is supported theoretically by the analysis in Subsection \ref{sbs4.1}. In the second approach, we study the convergence behavior of the penalized problem. From Table \ref{table:4}, we observe that the error between two successive approximations of the state variable $Y^{\epsilon}$ and the control input \(u^{\epsilon}\) decreases for different values of the penalty parameter $\epsilon$ in both the $L^{2}$ and $L^{\infty}$-norms.

\begin{table}[ht!]
		\centering
		\caption{ Error in $ L^{2}$ and $L^{\infty}-$norms  in successive approximate solution $(E_{\epsilon_{i}}:=\|Y^{\epsilon_{i}}-Y^{\epsilon_{i-1}}\|)$ of the state variables and $(E_{\epsilon_{i}}:=\|u^{\epsilon_{i}}-u^{\epsilon_{i-1}}\|)$ of the control variables for different values of $ \epsilon $ with \(r=\sqrt{\epsilon}\) in Example \ref{ex1}. }
		\begin{tabular}{ c|| c|| c|| c ||c|| c|| c|| }
			\toprule
			$\epsilon$&  $\|Y^{\epsilon}\|$ &  $\|Y^{\epsilon}\|_{\infty}$  & 	$ \|E_{\epsilon_{i}}\|$ & $ \|E_{\epsilon_{i}}\|_{\infty}$ &  $\|u^{\epsilon}\|_{\infty}$       & $ \|E_{\epsilon_{i}}\|_{\infty}$    \\
			\midrule
			$10^{0}$ & $0.0019$   &   $3.33e-04$ &   $ -- $ &     $-- $         &    $0.3183$      &  $--$       \\ 
			$10^{-1}$ & $0.0012$   &  $2.22e-04$ &  $ 0.0022 $&    $ 3.70e-04 $  &  $ 0.1007$       &  $0.2177$    \\
			$10^{-2}$ &  $ 6.92e-05$ &   $1.30e-05$ & $ 0.0011$ &   $ 2.09e-04 $ &  $ 0.0318$   &  $0.0688$     \\
			$10^{-3}$ &  $ 6.38e-06$ &   $1.18e-06$& $6.28e-05 $ &  $1.18e-05$     &  $ 0.0101$  &  $0.0218 $     \\
			$10^{-4}$ &  $1.09e-06$ &   $2.18e-07$ & $ 5.53e^-06$ &  $ 1.0435e-06$ &  $0.0032$  & $ 0.0069$   \\
			$10^{-5}$ &  $7.71e-07$ &  $1.54e-07$ & $ 5.49e-07 $ &   $1.03e-07 $   &  $0.0010$   &  $0.0022$  \\
			$10^{-6}$&   $7.48e-07$ &  $1.47e-07$ & $ 5.66e-08 $ &   $ 1.05e-08 $  &  $3.18e-04$ & $6.88e-04  $ \\
			\bottomrule
		\end{tabular}
		\label{table:4}
	\end{table}
    
	Next, to numerically verify the spatial order of convergence of the penalized feedback control problem, we choose \(\epsilon = c h^{l}\), where \(c\) is a small positive constant (see \cite{dione2015penalty} for details). In the remaining part of this example, we use the refined mesh solution with $h=\frac{1}{2048}$ as the reference solution.
   We choose the time step size $k=\frac{1}{1050}$.
    Table \ref{table:2} shows that the order of convergence of the state variable is approximately $1.39$ for $\epsilon=0.01h^{\frac{4}{3}}$ in both the $L^{2}$ and $L^{\infty}$-norms, which is consistent with the result stated in Remark~\ref{r5.21} for \(l = \frac{4}{3}\). For $\epsilon=0.01h^{2}$, we observe that the order of convergence for the state variable is two in both the $L^{2}$ and $L^{\infty}$-norms. However, Remark ~\ref{r5.21}, theoretically predicts only first-order convergence for  \(l = 2\).
    
	  Table \ref{table:3} shows that the order of convergence of the control input for $ \epsilon = 0.01h^{2}$ is one in the $ L^{\infty} $-norm, which supports the theoretical result in Remark~\ref{r5.21}. However, for $\epsilon=0.01 h^{4/3}$, the observed order of convergence is not consistent with the theoretical prediction in Remark~\ref{r5.21} for the control input.
	
			
			
			
			
			
	
	\begin{table}[ht!]
		\centering
		\caption{ The order of convergence (O.C.) of the state variable with respect to space in Example \ref{ex1} with the varying values of $h$ and a fixed value of $ k.$ }
		\begin{tabular}{c|| c|| c|| c|| c}
			\toprule
			h &  $\|y^{\epsilon}_{r}-Y^{\epsilon}\|$  & O. C. &  $\|y^{\epsilon}_{r}-Y^{\epsilon}\|_{\infty}$  &          O. C.    \\
			\midrule
			$\frac{1}{8}$ &   $ 3.3996e-07$       &     $--  $     &  $5.6108e-07 $ &    $--$                 \\
			
			$\frac{1}{16}$&   $  1.3826e-07 $     &  $1.29 $     &   $2.3321e-07$ &    $1.27$           \\
			
			$\frac{1}{32}$ &       $5.2825e-08 $      & $1.38$    &  $8.9011e-08$&     $1.38$              \\
			
			$\frac{1}{64}$ &      $ 2.0129e-08 $     &  $1.39$    &  $3.3934e-08$  &  $1.39$           \\
			
			$\frac{1}{128}$ &        $7.6789e-09$    & $ 1.39 $   &  $1.2944e-08$  &      $1.39$            \\
			
			$\frac{1}{256}$ &        $ 2.8825e-09$    & $1.41$     &  $4.8589e-09$  &      $1.41$            \\
			\bottomrule
		\end{tabular}
		\label{table:2}
	\end{table}
	
	\begin{table}[ht]
		\centering
		\caption{ The order of convergence (O. C.) of control input with respect to space in Example \ref{ex1} with the varying values of $h$ and a fixed value of $ k. $ }
		\begin{tabular}{c||c|| c || c ||c|| }
			\toprule
			h & $\|u^{\epsilon}_{r}-u^{\epsilon}_{h}\|_{\infty}$, $ \epsilon=0.01h^2 $        & O. C. & $\|u^{\epsilon}_{r}-u^{\epsilon}_{h}\|_{\infty}$, $ \epsilon=0.01h^{\frac{4}{3}}$   &  O. C.    \\
			\midrule
			$\frac{1}{8}$&   $  3.9121e-04 $ &  $--$ &   $0.0077$ & $--$           \\
			$\frac{1}{16}$&   $  1.9675e-04 $ &  $0.99$ &   $0.0048$ & $0.68$           \\
			
			$\frac{1}{32}$ & $9.7838e-05 $  & $1.00$  &  $0.0030$& $0.69$              \\
			
			$\frac{1}{64}$ & $4.8172e-05 $  & $1.02$  &  $0.0018$ &  $0.72$           \\
			
			$\frac{1}{128}$ & $2.3312e-05 $ & $ 1.04$ &  $0.0011$ & $0.76$            \\
			
			$\frac{1}{256}$ & $ 1.0880e-05$    & $1.09$  &  $5.9212e-04$  &      $0.83$ \\          
			\bottomrule
		\end{tabular}
		\label{table:3}
	\end{table}

    We now verify the  order of convergence between the original closed-loop system \eqref{ch1}–\eqref{c} and the penalized feedback control problem \eqref{p1}-\eqref{p2}. First, we solve the original closed loop system using $r=0.0001$ and $k= \frac{1}{7050}.$ The penalized system is then solved for varying values of the spatial step size $h$, while keeping $k$ fixed. From Table \ref{table:5}, we observe that for \(\epsilon=0.01h\), the order of convergence of the state variable is one in both the \(L^{2}\) and \(L^{\infty}\)-norms. Furthermore, the observed order of convergence is approximately two for \(\epsilon= h^{2}\)  and \(1.34\) for \(\epsilon=0.1h^{\frac{4}{3}}\). For the state variable, the order of convergence is not consistent with the theoretical results in Remark \ref{r5.2}.
    
     From Table \ref{table:7}, we notice that the order of convergence of the control input for \(\epsilon=0.01h\) is $0.5$ and for \(\epsilon= h^{2}\) is one in the \(L^{\infty}\)-norm. Furthermore, for \(\epsilon=0.1h^{\frac{4}{3}}\), we observe that the order of convergence of the control input is \(0.68\). Consequently, the order of convergence is consistent with the results of Remark \ref{r5.2} for the control input.

	\begin{table}[H]
		\centering
		\caption{ The order of convergence (O.C.) of the state variable with respect to space in Example \ref{ex1} with the varying values of $h$ and a fixed value of $ k. $ }
		     \begin{tabular}{ c|| c|| c|| c|| c }
			\toprule
			h &  $\|Y-Y^{\epsilon}\|$ & O. C. & $\|Y-Y^{\epsilon}\|_{\infty}$ &          O. C.    \\
			\midrule
			$\frac{1}{8}$ &   $ 4.0204e-06$       &     $--  $     &  $6.49e-06 $ &    $--$                 \\
			
			$\frac{1}{16}$&   $  1.8470e-06$     &  $ 1.22$     &   $3.1112e-06$ &    $1.05$           \\
			
			$\frac{1}{32}$ &       $8.5255e-07 $      & $1.12$    &  $1.4358e-06$&     $1.11$              \\
			
			$\frac{1}{64}$ &      $ 4.0421e-07 $     &  $1.07$    &  $6.8319e-07$  &  $1.07$           \\
			
			$\frac{1}{128}$ &        $1.9594e-07$    & $ 1.04 $   &  $3.3165e-07$  &      $1.04$            \\
			
			$\frac{1}{256}$ &        $ 9.6227e-08$    & $1.02$     &  $1.6318e-07$  &      $1.02$            \\
			\bottomrule
		\end{tabular}
		\label{table:5}
	\end{table}

	 \begin{table}[ht]
		\centering
		\caption{ The order of convergence (O.C.) of control input with respect to space in Example \ref{ex1} with the varying values of $h$ and a fixed value of $ k. $ }
		\begin{tabular}{c||c|| c || c ||c|| }
			\toprule
			h & $\|u-u^{\epsilon}_{h}\|_{\infty}$, $ \epsilon=0.01h $        & O. C. & $\|u-u^{\epsilon}_{h}\|_{\infty}$, $ \epsilon=h^{2}$   &  O. C.    \\
			\midrule
			$\frac{1}{8}$&   $  0.0111 $ &  $--$ &   $0.0392$ & $--$      \\
			$\frac{1}{16}$&   $  0.0079 $ &  $0.49$ &   $0.0198$ & $0.98$    \\
			
			$\frac{1}{32}$ & $0.0056$  & $0.49$  &  $0.0099$& $0.99$   \\
			
			$\frac{1}{64}$ & $0.0039 $  & $0.50$  &  $0.0049$ &  $1.00$   \\
			
			$\frac{1}{128}$ & $0.0028$ & $ 0.50$ &  $0.0025$ & $1.01$    \\
			
			$\frac{1}{256}$ & $ 0.0020$    & $0.51$  &  $0.0012$  &    $1.01$ \\          
			\bottomrule
		\end{tabular}
		\label{table:7}
	\end{table}

	In the next example, we show the behavior of the state variable and the control input in the penalized control problem for varying values of the diffusion coefficient $\nu$ and the cubic nonlinearity coefficient $\delta$. 
	
	\begin{example}
		\label{ex2}
		We choose the initial condition $ y_{0}(x)=x(1-x), \ x\in
		(0, 1).$ We select the parameters $ \alpha =0.1, \delta=0.1, $ $ r=0.001, $ and $ \epsilon=0.001.$ We take the space step size $h=\frac{1}{80}$, and the time step size $k=\frac{1}{900}.$ 
	\end{example}	
	Figures \ref{fig:ex2}(i) and (ii) display the state variable and control input in the $L^{2}$-norm for different values of \(\nu,\) respectively. For large values of \(\nu\), both the control input and the state variable lead to a faster decay toward zero as \(t\rightarrow \infty\).  Figures \ref{fig:ex2}(iv) and (v) present both the state variable and the control input for varying \(\delta\) with \(\nu=0.1\).

	\begin{figure}[ht]
		\centering
		(i)\includegraphics[width= 0.45\textwidth]{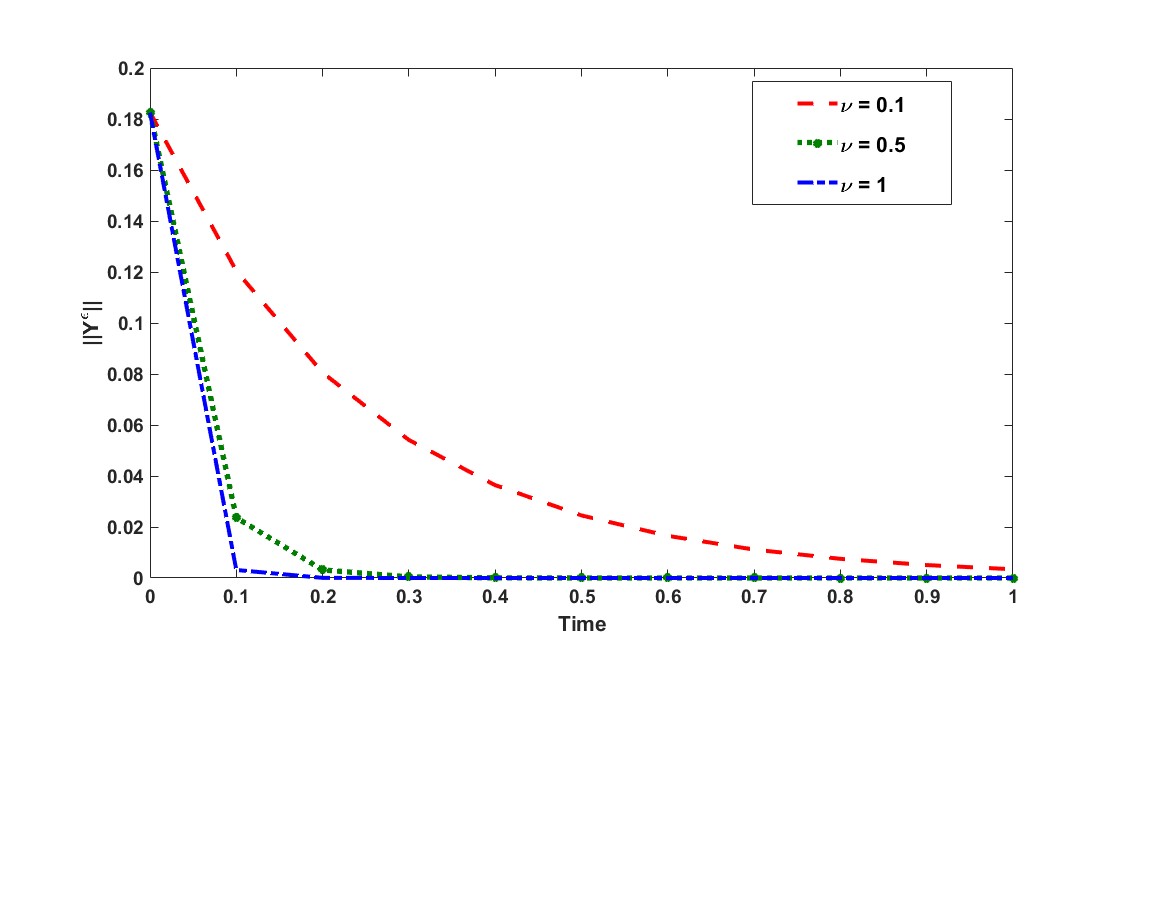}
		(ii) \includegraphics[width= 0.44\textwidth]{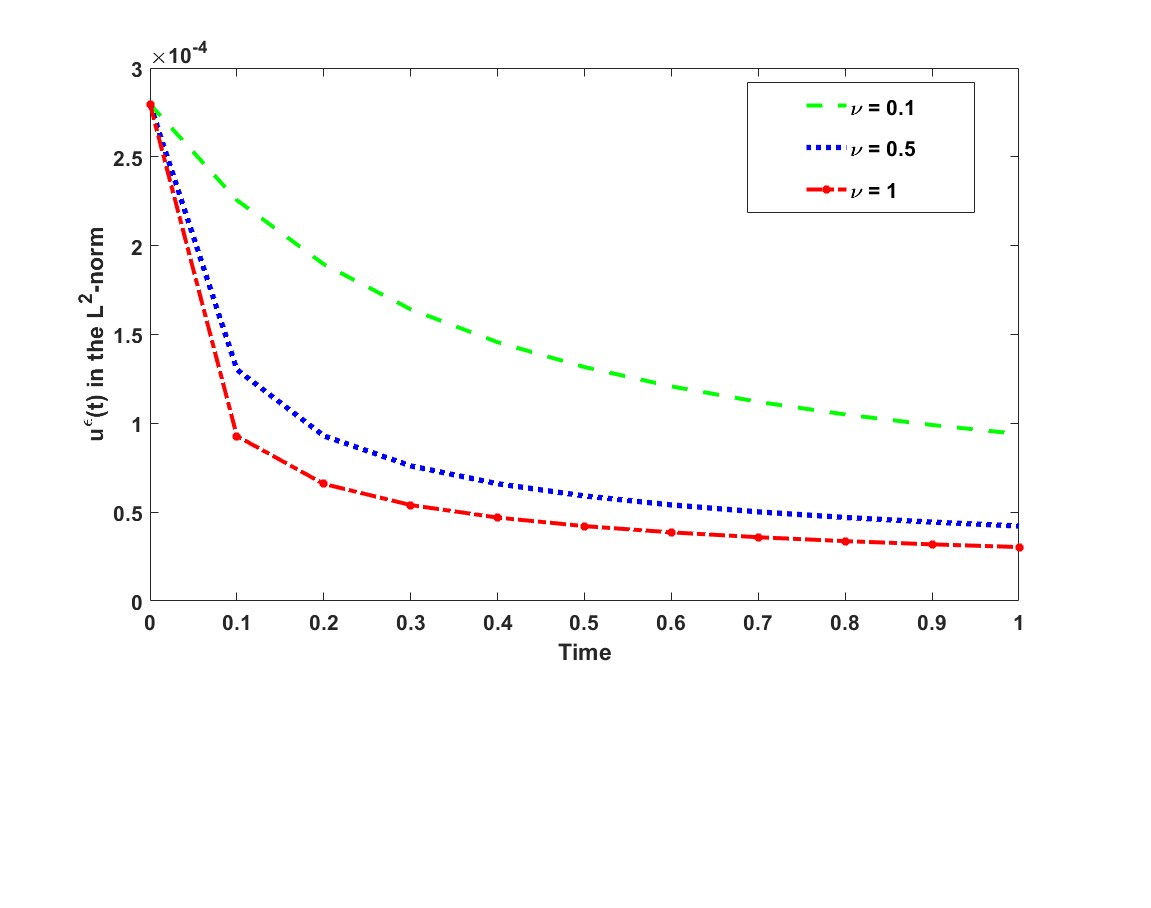}
		(iii) \includegraphics[width= 0.42\textwidth]{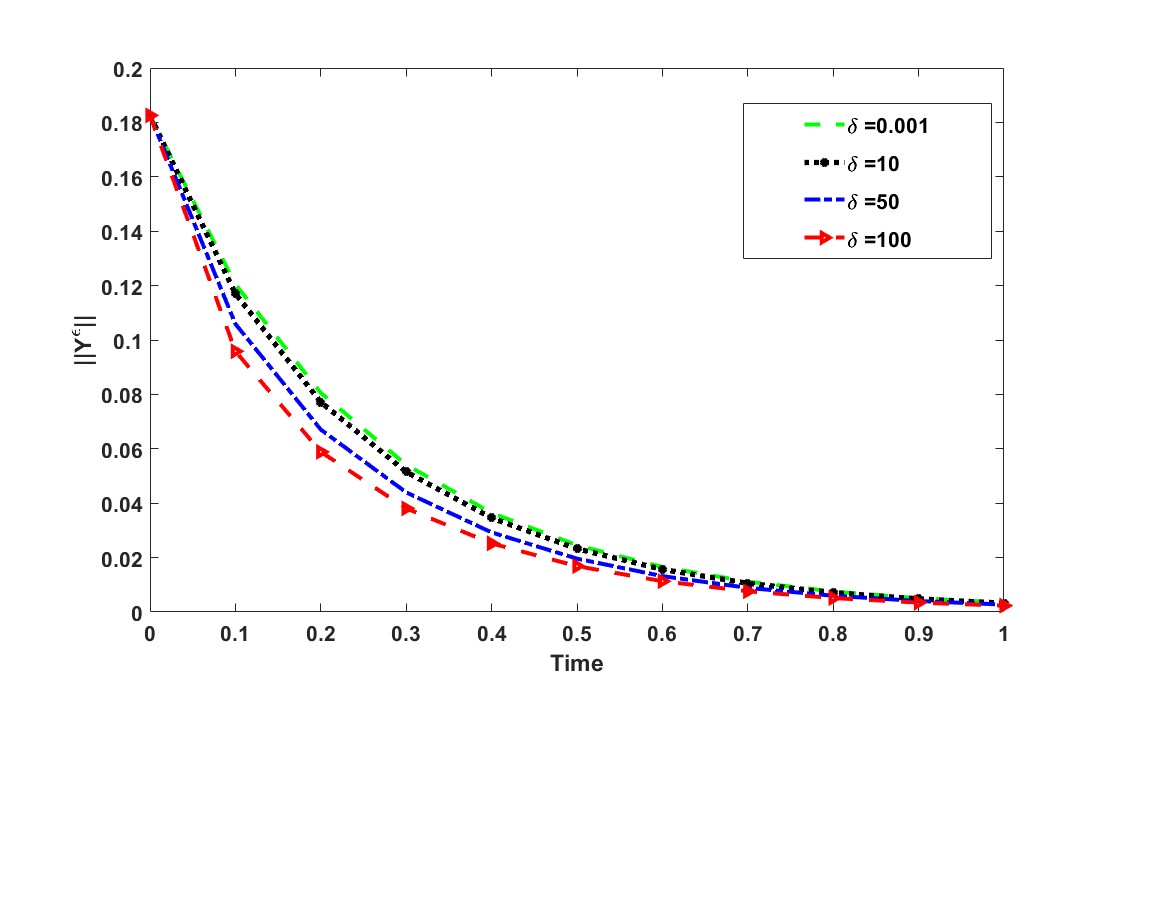}	
		(iv) \includegraphics[width= 0.42\textwidth]{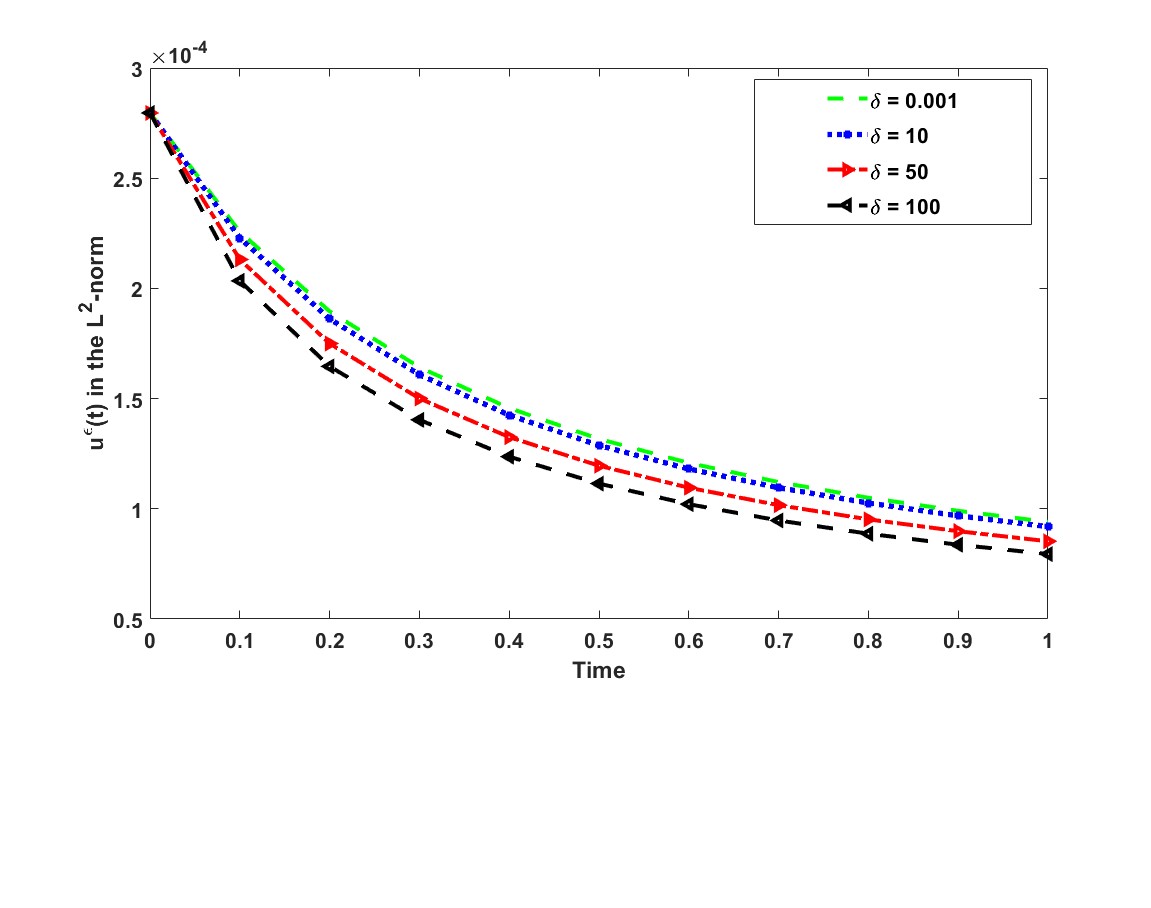}
		
		\caption{Example \ref{ex2}:
			{\bf (i)} The state variable in the $ L^{2}-$norm for different values of $ \nu $.  {\bf (ii)} Control input  in the $ L^{2}-$norm for different values of $\nu $. {\bf (iii)} The state variable in the $ L^{2}-$norm with various values of $ \delta$ with \(\nu=0.1\).  {\bf (iv)}  Control input in the $ L^{2}-$norm for various values of $\delta$. } \label{fig:ex2}
	\end{figure}    
	\section*{Acknowledgments}
	Sudeep Kundu gratefully acknowledges the support of the Science \& Engineering Research Board (SERB), Government of India, under the Start-up Research Grant, Project No. SRG/2022/000360.
	\section*{Declarations}
	\textbf{Data Availability.} 
	
	The codes are available from authors on reasonable request. 
	
	\textbf{CONFlLICT OF INTEREST.} 
	
	The authors declare no conflict of interest.
    
    \textbf{Appendix.}
    \begin{proof} Proof of the existence part of Theorem \ref{th3.1}.
    
     We use a Faedo-Galerkin approximation (see \cite{chin2021study} and \cite[Chapter 3]{MR953967}) with Assumption \((A1)\). Set $A =-\frac{d^{2}}{dx^{2}}$ with dense domain 
        \[D(A)=\{\phi\in H^{2}(0,1): \phi(0)=0, \  \epsilon\phi_{x}(1)+\phi(1)=0\}
        \]  in $L^{2}(0,1).$ Further, the operator $A$ is linear selfadjoint with symmetric compact invertible operator on $L^{2}(0,1)$. Therefore, $A$ has a discrete spectrum $0<\lambda_{1}<\lambda_{2}<\ldots<\lambda_{n}<\ldots$ and $\lim_{n \to \infty}\lambda_{n}=\infty.$ The corresponding eigenfunctions \((\phi_{i})_{i\in \mathbb{N}}\) form an orthonormal basis of \(L^{2}(0,1)\).
         Assume that \((\phi_{i})_{i\in \mathbb{N}}\) be an orthogonal basis of \(H_{\{0\}}^{1}(0,1)\). We seek a function \(y_{m}^{\epsilon} :=y^{m}:[0,T]\rightarrow H_{\{0\}}^{1}(0,1) \) of the form \(y^{m}=\sum_{i=1}^{M}\alpha_{i}^{m}(t)\phi_{i}(x)\), where \(M\) is a positive integer and the coefficients  \(\alpha_{i}^{m}(t)\) are selected such that \(\alpha_{i}^{m}(0)=(y_{0}(x), \phi_{i}), \ i=1,2,\ldots, M \) and 
		\begin{align}
			\label{3.71}
			&(y_{t}^{m}, \phi)+\nu (y_{x}^{m}, \phi_{x}) + \frac{\nu}{\epsilon}y^{m}(t,1) \phi(1)+ \delta((y^{m})^{3}, \phi)= \frac{\nu}{\epsilon}u^{m}(t)\phi(1)+\alpha(y^{m}, \phi), \ \ \forall \ \phi\in V_{m} ,
		\end{align}
		where \(V_{m}= span\{\phi_{1}, \phi_{2}, \ldots, \phi_{M}\}\).
		Since \eqref{3.71} is a finite dimensional nonlinear ODE system, the existence and uniqueness directly follow from ODE theory, we obtain a local solution \(y^{m}(t)\) for \(t\in[0, t_{m}^{*})\) with \(t_{m}^{*}<T.\) 
		
		Next, for global existence, we will show  {\it{a priori}} estimates. Following the proof of Lemmas \ref{L2.1}-\ref{L2.26} with \(\gamma=0\), we can easily verify that the sequences \((y^{m}), (y_{x}^{m}), (y_{t}^{m}), (y_{xt}^{m})\), and \((y_{xx}^{m})\), which are uniformly bounded in \(L^{\infty}(0,T; L^{2}(0,1))\). Additionally, the sequence \((y^{m}(t))\) and \((y^{m}(t,1))\) are uniformly bounded in \(L^{\infty}(0,T; L^{4}(0,1))\), and \(L^{\infty}(0,T)\), respectively. Moreover, the sequence \((y_{t}^{m}(t,1))\) is uniformly bounded in \(L^{\infty}(0,T)\). 
		
		As the {\it{a priori}} bounds are uniformly bounded and independent of \(m\), using the Banach–Alaoglu theorem we can extract a subsequence considering as \((y^{m})\) such that
		\begin{align}
			\label{3.81}
			\begin{cases}
				y^{m}\rightarrow y \quad \text{weak\(^*\) in} \quad  L^{\infty}(0, T; H^{2}(0,1) ), \quad 
				y^{m}\rightarrow y \quad \text{weakly in} \quad  L^{2}(0, T; H^{2}(0,1) ),\\
				y_{t}^{m}\rightarrow y_{t} \quad \text{weak\(^*\) in} \quad L^{\infty}(0, T; H^{1}(0,1) ), \quad
				y_{t}^{m}\rightarrow y_{t} \quad \text{weakly in} \quad L^{2}(0, T; H^{2}(0,1) ), \\	
				y^{m}\rightarrow y \quad \text{weak\(^*\) in} \quad  L^{\infty}(0, T; L^{4}(0,1)), \quad
				y^{m}\rightarrow y \quad \text{weakly in} \quad  L^{4}(0, T; L^{4}(0,1)),\\
				y^{m}(t,1)\rightarrow y(t,1)\quad \text{weak\(^*\) in} \quad L^{\infty}(0,T),\quad y_{t}^{m}(t,1)\rightarrow y_{t}(t,1)\quad \text{weak\(^*\) in} \quad L^{\infty}(0,T),
			\end{cases}
		\end{align}
		as \(m\rightarrow \infty.\) Since \(H_{\{0\}}^{1}(0,1)\subset L^{2}(0,1)\) is a compact embedding, an application of the Aubin-Lions compactness lemma gives the following strong convergence:
		\begin{align}
			\label{3.91}
			y^{m}\rightarrow y \  \text{strongly in}\  L^{2}(0,T; L^{2}(0,1)) . 
		\end{align}
		
		Now fix a positive integer \(N\) and choose a test function \(\chi\in C^{1}(0,T; H_{\{0\}}^{1}(0,1))\) such that
		\begin{align}
			\label{3.10}
			\chi(t)=\sum_{i=1}^{N}\alpha_{i}(t)\phi_{i},
		\end{align}
		where \((\alpha_{i}(t))_{i=1,2,\ldots, N} \) are given smooth functions. We choose $m\geq N$. By multiplying \eqref{3.71} by \(\alpha_{i}(t)\), summing from \(i=1,2,\ldots, N\), and integrating with respect to time over \([0,T]\), we arrive at
		\begin{align}
			\label{3.11}
			\int_{0}^{T}\Big((y_{t}^{m}, \chi)+\nu (y_{x}^{m}, \chi_{x}) + \frac{\nu}{\epsilon}y^{m}(t,1) \chi(1)+ \delta((y^{m})^{3}, \chi)\Big)dt= \int_{0}^{T}\Big(\frac{\nu}{\epsilon}u^{m}(t)\chi(1)+\alpha(y^{m}, \chi)\Big)dt.
		\end{align}
		Passing the limit as \(m\rightarrow \infty\), we obtain from \eqref{3.81} and \eqref{3.11}
		\begin{align}
			\label{3.12}
			\int_{0}^{T}\Big((y_{t}, \chi)+\nu (y_{x}, \chi_{x}) + \frac{\nu}{\epsilon}y(t,1) \chi(1)+ \delta((y)^{3}, \chi)\Big)dt= \int_{0}^{T}\Big(\frac{\nu}{\epsilon}u(t)\chi(1)+\alpha(y, \chi)\Big)dt, 
		\end{align}
		provided we have \(\int_{0}^{1}(y^{m})^{3} \chi dx\rightarrow \int_{0}^{1} y^{3} \chi dx\) as \(m\rightarrow \infty\).
		
		Since \((y^{m})^{3}-y^{3}=(y^{m}-y)\big((y^{m})^{2}+y^{2}+y^{m}y\big)=\frac{1}{2}(y^{m}-y)\big((y^{m})^{2}+y^{2}+(y^{m}+y)^{2}\big)\), we can write using H\"older's inequality
		\begin{align*}
			\int_{0}^{1}\big((y^{m})^{3}-y^{3}\big)\chi dx&=\frac{1}{2} \int_{0}^{1}(y^{m}-y)\big((y^{m})^{2}+y^{2}+(y^{m}+y)^{2}\big) \chi dx,
			\\&\leq \frac{1}{2} \int_{0}^{1} (y^{m}-y)\big(3(y^{m})^{2}+3 y^{2}\big) \chi dx,
			\\&\leq C\norm{y^{m}-y}\big(\norm{y^{m}}_{L^{4}}^{2}+ \norm{y}_{L^{4}}^{2}\big) \norm{\chi}_{\infty},
			\\&\leq C\norm{y^{m}-y}\big(\norm{y^{m}}_{1}^{2}+ \norm{y}_{1}^{2}\big) \norm{\chi}_{1}\rightarrow 0 \  \text{as} \ m\rightarrow \infty.
		\end{align*}
		
		Therefore, \eqref{3.12} holds \(\forall \chi\in L^{2}(0,T; H_{\{0\}}^{1}(0,1))\). Since \(C_{0}^{\infty}(0,1)\) is dense in \(H_{\{0\}}^{1}(0,1)\), we arrive at
		\begin{align*}
			(y_{t}, \chi)+\nu (y_{x}, \chi_{x}) + \frac{\nu}{\epsilon}y(t,1) \chi(1)+ \delta((y)^{3}, \chi)= \frac{\nu}{\epsilon}u(t)\chi(1)+\alpha(y, \chi), \ \forall \ \chi\in H_{\{0\}}^{1}(0,1).
		\end{align*}
		The proof of \(y^{\epsilon}(0,x)=y_{0}(x)\) follows from \cite{chin2021study}. 
        This completes the proof.
	\end{proof}

   \begin{proof}
   Details proof of Theorem \ref{th2.2}
   
		Differentiating \eqref{ch1} with respect to time, we obtain
		\begin{align}
			\label{1.7}
			y_{tt}-\nu y_{xxt}=\alpha y_{t}-3\delta y^{2}y_{t}.
		\end{align}
		Forming the \(L^{2}\)-inner product of \eqref{1.7} with \(y_{t}\) yields
		\begin{align*}
			\frac{1}{2}\frac{d}{dt}\norm{y_{t}}^{2}+\nu \norm{y_{xt}}^{2}+ 3\delta \norm{yy_{t}}^{2}=\nu y_{xt}(t,1)y_{t}(t,1)+\alpha\norm{y_{t}}^{2}.
		\end{align*}
		Hence, we can rewrite the above as
		\begin{align*}
			\frac{1}{2}\frac{d}{dt}\norm{y_{t}}^{2}+\nu \norm{y_{xt}}^{2}+ 3\delta \norm{yy_{t}}^{2}=\nu y_{xt}(t,1)y_{t}(t,1)+\alpha\norm{y_{t}}^{2}+r(x,y_{t}(t)) (x,y_{tt}(t))-r(x,y_{t}) (x,y_{tt}(t)).
		\end{align*}
		Substituting the expression of \(y_{tt}\) from \eqref{1.7} and applying integration by parts, we obtain
		\begin{align*}
			\frac{1}{2}\frac{d}{dt}\left(\norm{y_{t}}^{2}+r(x,y_{t}(t))^{2}\right)&+\nu \norm{y_{xt}}^{2}+ 3\delta \norm{yy_{t}}^{2}\\&=\nu y_{xt}(t,1)y_{t}(t,1)+\alpha\norm{y_{t}}^{2}+r\nu (x,y_{t}(t)) y_{xt}(t,1)\\&\quad +r\nu(x,y_{t}(t)) y_{t}(t,1)+r\alpha(x,y_{t}(t))^{2}-3\delta r(x,y_{t}(t)) (x,y^{2}(t)y_{t}(t)).
		\end{align*}
       
         Using \(y_{t}(t,1)=-r(x,y_{t}(t))\), Young's inequality, and multiplying the above inequality by \(2e^{2\gamma t}\), we have 
		\begin{align*}
			\frac{d}{dt}\Big(e^{2\gamma t}\norm{y_{t}}^{2}&+re^{2\gamma t}(x,y_{t}(t))^{2}\Big)+2\nu e^{2\gamma t} \norm{y_{xt}}^{2}+ 6\delta e^{2\gamma t} \norm{yy_{t}}^{2}\\&\leq 2\alpha e^{2\gamma t}\norm{y_{t}}^{2}+ Ce^{2\gamma t}(x,y_{t}(t))^{2}+C(r, \delta)e^{2\gamma t}\norm{y}_{\infty}^{4}\norm{y_{t}}^{2}+ 2 \gamma e^{2\gamma t}\norm{y_{t}}^{2}.
		\end{align*}
		Integrating to the above inequality with respect to time from \(0\) to \(t\), and using Lemma \ref{L1.1} together with the estimate \(\norm{y}_{\infty}\leq C\norm{y}_{1}\).
        Also, using Sobolev embedding, we can write from \eqref{ch1}
        \begin{align*}
        \norm{y_{t}}^{2}\leq C\big(\norm{y_{xx}}^{2}+\norm{y}^{2}+\norm{y}_{L^{6}}^{6}\big)\leq C\big(\norm{y_{xx}}^{2}+\norm{y}^{2}+\norm{y}_{1}^{6}\big).
        \end{align*}
        Setting \(t=0\) to the above inequality and multiplying by \(e^{-2\gamma t}\) in the resulting inequality completes the first part of the proof.
    
		For the proof of second part, taking \(L^{2}\)-inner product between \eqref{1.7} and \(y_{tt}\), we obtain
		\begin{align*}
			\frac{\nu}{2}\frac{d}{dt}\norm{y_{xt}}^{2}+\norm{y_{tt}}^{2}=\nu y_{xt}(t,1)y_{tt}(t,1)+\alpha (y_{t}, y_{tt})-3\delta(y^{2}y_{t}, y_{tt}).
		\end{align*}
		Adding \(r(x, y_{tt}(t))^{2}\) to both sides of the above equation, we arrive at
		\begin{align*}
			\frac{\nu}{2}\frac{d}{dt}\norm{y_{xt}}^{2}+\norm{y_{tt}}^{2}+r(x,y_{tt}(t))^{2}=\nu y_{xt}(t,1)y_{tt}(t,1)+\alpha (y_{t}, y_{tt})-3\delta(y^{2}y_{t}, y_{tt})+r(x,y_{tt}(t)) (x,y_{tt}(t)).
		\end{align*}
		Now, substituting the value of \(y_{tt}\) from \eqref{1.7} and using integration by parts, we get
		\begin{align*}
			\frac{\nu}{2}\frac{d}{dt}\norm{y_{xt}}^{2}+\norm{y_{tt}}^{2}&+r\big(x,y_{tt}(t)\big)^{2}\\&=\nu y_{xt}(t,1)y_{tt}(t,1)+\alpha (y_{t}, y_{tt})-3\delta(y^{2}y_{t}, y_{tt})+r\nu(x,y_{tt}(t)) y_{xt}(t, 1)\\&\quad -r\nu (x,y_{tt}(t)) y_{t}(t, 1)+r\alpha(x,y_{tt}(t)) (x,y_{t}(t))-3r\delta (x,y_{tt}(t)) (x, y^{2}(t)y_{t}(t)).
		\end{align*}
		A use of the Young's inequality yields
		\begin{align*}
			\frac{\nu}{2}\frac{d}{dt}\norm{y_{xt}}^{2}+\frac{1}{2}\norm{y_{tt}}^{2}+\frac{r}{2}\big(x,y_{tt}(t)\big)^{2}\leq \alpha^{2}\norm{y_{t}}^{2}+C(\delta,r)\norm{y}_{\infty}^{4}\norm{y_{t}}^{2}+C(r,\alpha,\nu)(x,y_{t}(t))^{2}.
		\end{align*}
		Multiplying the above inequality by \(2e^{2\gamma t}\) and integrating the resulting inequality with respect to time from \(0\) to \(t\), we observe that
		\begin{align}
			\label{1.8}
			\nu e^{2\gamma t}\norm{y_{xt}}^{2}+\int_{0}^{t}e^{2\gamma s}\Big(\norm{y_{tt}(s)}^{2}&+r(x,y_{tt}(s))^{2}\Big)ds\\& \nonumber \leq C\int_{0}^{t}e^{2\gamma s}\Big(\norm{y_{t}(s)}^{2}+(x,y_{t}(s))^{2}\Big)ds+\norm{y_{xt}(0}^{2}.
		\end{align}
		Differentiating \eqref{ch1} with respect to \(x\) and using Sobolev embedding, we can write
        \begin{align*}
            \norm{y_{xt}}^{2}\leq C\big(\norm{y_{xxx}}^{2}+\norm{y}_{1}^{2}+\norm{y}_{1}^{6} \big),
        \end{align*}
        and after evaluating at \(t=0\) yields
		\begin{align}
			\label{1.9}
			\norm{y_{xt}(0}^{2}\leq C(\norm{y_{0}}_{3}^{2}+\norm{y_{0}}_{1}^{6}).
		\end{align}
		Consequently, the result follows from \eqref{1.8} using \eqref{1.9} and Lemma \ref{L1.1} with the first part, and finally multiplying the resulting inequality by \(e^{-2\gamma t}\).
	\end{proof}
	
\end{document}